\documentclass[11pt,a4paper,reqno]{amsart}

\usepackage[margin=1.1in]{geometry}
\usepackage{amsmath,amssymb,amsfonts,mathrsfs}
\usepackage{graphicx}
\usepackage{bm}
\usepackage{booktabs}
\usepackage{multirow}
\usepackage{color}
\usepackage{empheq}
\usepackage{enumitem}
\usepackage{subcaption}
\usepackage{algorithm}
\usepackage{algorithmic}
\usepackage{hyperref}   

\providecommand{\Div}{\operatorname{div}}          % Divergence
\providecommand*{\Dist}[2]{\operatorname{dist}({#1};{#2})}   % distance
\providecommand*{\Dist}[2]{\Dist{#1}{#2}}
\newcommand{\Bd}{{\boldsymbol{d}}}

\newcommand{\Bf}{{\boldsymbol{f}}}

\newcommand{\Bn}{{\boldsymbol{n}}}

\newcommand{\Br}{{\boldsymbol{r}}}

\newcommand{\Bu}{{\boldsymbol{u}}}
\newcommand{\Bv}{{\boldsymbol{v}}}
\newcommand{\Bw}{{\boldsymbol{w}}}
\newcommand{\Bx}{{\boldsymbol{x}}}

\newcommand{\Bz}{{\boldsymbol{z}}}

\newcommand{\BF}{{\boldsymbol{F}}}

\newcommand{\BH}{{\boldsymbol{H}}}

\newcommand{\BL}{{\boldsymbol{L}}}

\newcommand{\BR}{{\boldsymbol{R}}}

\newcommand{\BU}{{\boldsymbol{U}}}

\newcommand{\etabf}{\boldsymbol{\eta}}

\newcommand{\xibf}{\boldsymbol{\xi}}

\newcommand{\taubf}{\boldsymbol{\tau}}

\newcommand{\chibf}{\boldsymbol{\chi}}

\newcommand{\Psibf}{\boldsymbol{\Psi}}

\newcommand{\Phibf}{\boldsymbol{\Phi}}

\newcommand{\Ca}{{\mathcal A}}
\newcommand{\Cb}{{\mathcal B}}

\newcommand{\Cf}{{\mathcal F}}

\newcommand{\Ci}{{\mathcal I}}

\newcommand{\Co}{{\mathcal O}}

\newcommand{\Ct}{{\mathcal T}}

\newcommand{\bbB}{\mathbb{B}}

\newcommand{\bbD}{\mathbb{D}}

\newcommand{\bbI}{\mathbb{I}}

\newcommand{\bbL}{\mathbb{L}}

\newcommand{\bbP}{\mathbb{P}}

\newcommand{\bbR}{\mathbb{R}}

\providecommand*{\wh}[1]{\widehat{#1}}

\newcommand*{\Lp}[2][\defaultdomain]{L^{#2}({#1})}
\newcommand*{\Lpv}[2][\defaultdomain]{\BL^{#2}({#1})}

\newcommand*{\Ltwo}[1][\defaultdomain]{\Lp[#1]{2}}
\newcommand*{\Ltwov}[1][\defaultdomain]{\Lpv[#1]{2}}

\newcommand*{\Hmv}[2][\defaultdomain]{\BH^{#2}({#1})}

\newcommand*{\Honev}[1][\defaultdomain]{\Hmv[#1]{1}}

\newcommand{\D}{\mathrm{d}}
\newcommand{\Rd}{\mathrm{d}}

\newcommand{\be}{\begin{eqnarray}}
\newcommand{\ee}{\end{eqnarray}}

\newcommand{\ben}{\begin{eqnarray*}}
\newcommand{\een}{\end{eqnarray*}}

\allowdisplaybreaks
\newtheorem{theorem}{Theorem}[section]

\newtheorem{remark}[theorem]{Remark}
\theoremstyle{definition}

\newtheorem{corollary}[theorem]{Corollary}

\numberwithin{equation}{section}

\newcommand\bmx{{\bm x}}
\newcommand\bmu{{\bm u}}
\newcommand\bmv{{\bm v}}
\newcommand\bmw{{\bm w}}
\newcommand\bmn{{\bm n}}
\newcommand\bmf{{\bm f}}

\newcommand\bmchi{{\bm\chi}}
\newcommand\id{ {\mathbf {\mathrm {id} }} }

\def\Gh{\Gamma_h}
\def\Oh{\Omega_h}

\def\ng{\nabla_\Gamma}

\begin{document}

\title[]{An energy--stable ALE--FEM with compensated tangential velocity for 
Navier--Stokes free--boundary flows}

\author[]{Nuo Lei,\,\, Weiying Zheng,\,\, and\,\, Na Zhu}
\address{Nuo Lei: Department of Applied Mathematics, The Hong Kong Polytechnic University, Hong Kong; 
Hua Loo-Keng Center for Mathematical Sciences, Academy of Mathematics and Systems Science, Chinese Academy of Sciences, Beijing, China. 
{\rm Email address: {\tt nuo97.lei@polyu.edu.hk}}}
\address{Weiying Zheng: State Key Laboratory of Mathematical Sciences, Institute of Computational Mathematics and Scientific/Engineering Computing,
    Academy of Mathematics and Systems Science,
    Chinese Academy of Sciences;
    School of Mathematical Science,
    University of Chinese Academy of Sciences,
    Beijing, China.
{\rm Corresponding author. Email addresses: {\tt zwy@lsec.cc.ac.cn}}}
\address{Na Zhu: Academy of Mathematics and Systems Science, Chinese Academy of Sciences, Beijing, China.
{\rm Email address: {\tt zhuna@lsec.cc.ac.cn}}}

\subjclass[2020]{65M50, 65M60, 35R35, 76D05, 76D455}

\keywords{Arbitrary Lagrangian--Eulerian method, 
compensated tangential velocity, 
finite element method,
energy dissipation,
free--boundary flow.}

\maketitle

\begin{abstract}
We propose an energy-stable arbitrary Lagrangian–Eulerian (ALE) finite element formulation for incompressible Navier–Stokes flows with surface-tension-driven free boundaries. The key ingredient is a multiplier-free compensated tangential velocity formulation, which redistributes boundary nodes via the discrete tangential curvature residual. 
The relaxation parameter governing the artificial tangential velocity is scaled according to the viscous–capillary balance, yielding the natural choice \(\alpha= \gamma_0/ \mu \), 
{ where \(\gamma_0\) is the surface tension coefficient and \(\mu\) is the dynamic viscosity.}
We prove that the fully discrete scheme is well-posed, energy-dissipative, and temporally consistent with its semi-discrete counterpart. Numerical results confirm the robustness of the proposed method across a range of $\mu$ and $\gamma_0$ values. Furthermore, the method exhibits superior mesh quality preservation compared with the BGN method, particularly in the small-time-step regime.
\end{abstract}

\setlength\abovedisplayskip{4pt}
\setlength\belowdisplayskip{4pt}

\section{Introduction}

Free-boundary and moving-interface problems driven by capillarity arise in a broad range of scientific and engineering applications, including liquid-metal processing, droplet formation in inkjet printing, atomization, and fuel injection. In such problems, the evolution of the interface is strongly coupled to the bulk incompressible Navier--Stokes equations, while surface tension gives rise to a curvature-dependent interfacial force. 
This capillary force is associated with the variation of the interfacial energy, and the coupled system satisfies a fundamental energy-dissipation law in which the sum of the kinetic and interfacial energies decreases through viscous dissipation.

Among the various numerical approaches to free-boundary Navier--Stokes flows, the
arbitrary Lagrangian--Eulerian (ALE) framework is particularly attractive.
It tracks the evolving interface using a fitted mesh while allowing the bulk mesh velocity to be chosen independently of the fluid velocity. 
This flexibility provides additional freedom to control mesh deformation and maintain mesh quality throughout the evolution \cite{DONEA1982689,agnese2020,Fu2020,DuanLiALEJCP22}.

It is therefore of considerable interest to develop energy-diminishing numerical approximations for Navier--Stokes flows with surface tension.
A discrete energy-dissipation law provides an \emph{a priori} bound on the
numerical solution and prevents artificial energy production caused by the discretization. Such control is particularly important in long-time simulations, for strongly deformed interfaces, and in regimes where the solution has limited regularity. In these settings, energy estimates often form the basis for stability and convergence analyses. Energy-diminishing or structure-preserving finite element methods have been developed for two-phase Navier--Stokes flows
\cite{ZHAO2020109582,DuanLiALEJCP22,23PFEM,GARCKE2026114922},
fluid--structure interaction problems
\cite{GaoHuLisisc}, 
and free-boundary problems with moving contact lines
\cite{MR4586461,BAO2025108767,Hu2026_Droplet}.

% Moreover, maintaining high-quality mesh remains a longstanding challenge in ALE methods. The surface shape is determined only by the normal velocity, the tangential velocity is used to redistribute mesh points and improve mesh quality. 
Maintaining mesh quality is another central issue in ALE methods.
Since the free--boundary determines the evolving domain, the normal component of the mesh velocity governs the boundary motion, whereas its tangential component can be chosen to redistribute boundary nodes and thereby improve mesh quality.
A prominent approach to controlling tangential mesh motion is the
parametric finite element framework introduced by Barrett, Garcke, and N\"urnberg, hereafter referred to as the BGN method
\cite{15BGNESAIM,Barrett15,BGN2020HoNA}. A key feature of the BGN method is its implicitly defined tangential velocity, which promotes the equidistribution of mesh nodes.
The method has been widely applied to complex interfacial dynamics; see, for example,
\cite{Bao2021,Bao2021SINA,MR4586461,BAO2025108767,Bai-Li-MCOM2025,DuanLi24}. 
However, as the time-step size $\Delta t\rightarrow0$, the BGN method can be viewed as a purely spatial discretization that prescribes only the normal velocity, leaving the tangential velocity not well-controlled. Consequently, the induced mesh redistribution may deteriorate in practice. This motivates the development of continuous
tangential-velocity models that remain well posed independently of
$\Delta t$.

Motivated by the BGN scheme, Hu and Li \cite{HuLiNM2022} proposed a continuous model to determine the ALE velocity $\Bw$ of the surface
\begin{align}\label{eq-BGN MDR}
\Delta_\Gamma \bmw =  \kappa \bmn, \quad 
\bmw \cdot \bmn =  \bmu \cdot \bmn \quad \hbox{on}\;\;\Gamma(t),
\end{align}
where $\bmu$ denotes the underlying surface velocity, $\bmn$ is the unit normal on $\Gamma(t)$, and $\kappa$ is an unknown scalar function on $\Gamma(t)$. By \eqref{eq-BGN MDR}, the authors implicitly require that $\Delta_\Gamma \bmw$ is parallel to $\Bn$. In fact, the velocity characterized by \eqref{eq-BGN MDR} minimizes the surface deformation-rate functional
\begin{equation}
	\mathcal{E}( \bmw) = \frac{1}{2} \int_{\Gamma(t)} |\nabla_\Gamma\bmw|^2 \, \Rd s,
\end{equation}
subject to the constraint 
\begin{align*}
    \bmw\cdot \bmn=\bmu\cdot \bmn  \quad \hbox{on}\;\;\Gamma(t).
\end{align*}
This variational characterization provides a well-posed continuous principle for determining the tangential velocity and is known as the minimal deformation rate (MDR) formulation.

The original MDR formulation and related artificial tangential-velocity approaches were developed primarily to improve mesh redistribution while preserving the prescribed normal motion \cite{HuLiNM2022}. In a related direction, Duan and Li proposed a minimal-deformation (MD) formulation based on minimizing the deformation energy of the flow map $X(\cdot,t):\Gamma(0)\to\Gamma(t)$  \cite{DuanLi24}. Very recently, Gao, Li, and Tang proposed a ``relaxed'' minimal deformation (RMD) formulation, which generates a tangential motion using harmonic map heat flow \cite{Gao2026NM}.
These early MDR formulations provide effective mechanisms for maintaining mesh quality over a broad range of time-step sizes. Their principal objective, however, is the design of suitable tangential mesh velocities, and energy-stable time discretization is not an automatic consequence of the underlying continuous variational principles.

% More recent studies have explicitly addressed the compatibility between MDR-type mesh redistribution and discrete energy stability for geometric curvature flows. Energy-stable MDR schemes and dual formulations have been developed in \cite{Gao2026NM,GaoLiTang2026dual}. In the dual approach, an auxiliary multiplier is introduced at the continuous level. This multiplier vanishes for the exact continuous solution and therefore does not alter the underlying geometric evolution, while revealing an energy-dissipation structure that permits linearly implicit, energy-stable discretizations with MDR-type tangential mesh motion. Related convergence analyses for MDR-type is provided in \cite{MR5009273}.

More recent studies have sought to reconcile MDR-type mesh redistribution with discrete energy stability for geometric curvature flows. 
In \cite{Gao2026SISC}, an energy-stable MDR scheme was developed for mean curvature flow and surface diffusion. 
More recently, Gao, Li, and Tang \cite{GaoLiTang2026dual} proposed a dual formulation in which an auxiliary multiplier is introduced at the continuous level. 
The multiplier vanishes for the exact continuous solution and hence does not alter the underlying geometric evolution, while exposing an energy-dissipation structure that enables linearly implicit,
energy-stable discretizations with MDR-type tangential mesh motion. 

In this paper, we propose a new energy-stable ALE finite element method for a surface-tension-driven Navier--Stokes free--boundary problem. Unlike purely geometric curvature flows, the normal velocity of the surface is not prescribed directly by a geometric law; instead, it is determined by the coupled fluid velocity through the kinematic condition 
\begin{align}\label{eq-mesh-normal-condition}
	\bmw\cdot\bmn = \bmu\cdot\bmn \quad\text{on }\Gamma(t).
\end{align}
This condition determines only the normal component of the mesh velocity,
while the tangential component remains unspecified at the continuous level. Nevertheless, the choice of tangential mesh velocity is crucial at the discrete level because it strongly affects the
quality of the surface mesh and the energy stability of numerical solutions.

The preceding observation suggests that, in the small--time--step regime, an additional principle is needed to determine the tangential velocity and thereby remove the degeneracy. 
Motivated by this, we determine the tangential component using the discrete tangential curvature residual.
% with tangential curvature residual.
Let $\bmw_{\Gamma}$ denote the ALE surface velocity. Accordingly, we decompose it into its
normal and tangential components as
\begin{align}\label{eq-normal-mesh-strong}
\bmw_{\Gamma} = \bmw_{\bmn}+\bmw_{\taubf},\quad
\bmw_{\bmn} := (\bmu\cdot\bmn)\bmn,
\quad
\bmw_{\taubf}\cdot\bmn = 0
\quad\text{on }\Gamma(t).
\end{align}

To reveal a tangential redistribution mechanism, in this paper we propose to define the tangential ALE velocity $\bmw_{\taubf}\in \BU_{\taubf}(\Gamma(t))$ by solving the variational problem
\begin{equation}\label{eq-tangential-mesh-weak}
\left\langle\nabla_\Gamma\bmw_{\taubf},\nabla_\Gamma\bm\eta\right\rangle
= -\frac{\gamma_0}{\mu} \left\langle\nabla_\Gamma\mathrm{id},\nabla_\Gamma\bm\eta\right\rangle
-\varepsilon\left\langle(\mathbb{I}-\bmn\otimes\bmn)\bmu,\bm\eta \right\rangle \quad \forall\,\etabf \in \BU_{\taubf}(\Gamma(t)),
\end{equation}
where $\BU_{\taubf}(\Gamma(t)) := \big\{ \bm\eta\in [H^1(\Gamma(t))/\bbR]^d : \bm\eta\cdot\bmn=0\big\}$, $\gamma_0$ is the surface tension coefficient, and $\mu$ is the dynamic viscosity of the fluid. 
Moreover, $\varepsilon\ge 0$ is a relaxation parameter that is sufficiently small and independent of the physical parameters. The geometric identity
\[
-\Delta_\Gamma\mathrm{id}=H\bmn 
\]
implies that the first term on the right-hand side of \eqref{eq-tangential-mesh-weak} vanishes at the continuous level, namely,
\begin{equation}\label{eq:nabla_Gamma_id}
\left\langle \nabla_\Gamma \mathrm{id},
\nabla_\Gamma\bm\eta\right\rangle
=\left\langle H\bmn,\bm\eta\right\rangle=0
\quad \forall\,\etabf \in \BU_{\taubf}(\Gamma(t)).
\end{equation}
In the particular case $\varepsilon=0$, equation \eqref{eq-tangential-mesh-weak} yields a purely normal surface velocity, namely,
\[
\bmw_{\taubf} \equiv {\bf 0} \quad \hbox{on}\;\;\Gamma(t).
\]
However, at the discrete level, a purely normal surface velocity often leads to unstable numerical schemes and, eventually, to the degeneration of ALE meshes.

The discrete counterpart of \eqref{eq-tangential-mesh-weak} generally yields a nonzero tangential surface velocity $\bmw_{\taubf,h}$, even for 
$\varepsilon=0$. We prove that this tangential velocity ensures the energy stability of the numerical scheme and good mesh quality. In this sense, we call $\bmw_{\taubf,h}$ the compensated tangential velocity (CTV), equation \eqref{eq-tangential-mesh-weak} the CTV formulation, and the numerical scheme based on \eqref{eq-tangential-mesh-weak} the CTV scheme. In contrast to purely geometric strategies, the parameter $\gamma_0/\mu$ is related to the ratio of the time scales for fluid diffusion and surface capillarity.

In this work, we develop a spatially semi‑discrete scheme and a fully discrete scheme for the Navier–Stokes free‑boundary problem, based on the CTV formulation. The fully discrete scheme is unconditionally energy‑stable and reduces to the semi‑discrete scheme as $\Delta t\to 0$. Moreover, it overcomes the degradation of the BGN method in the small $\Delta t$ regime. Numerical experiments demonstrate that the CTV-ALE method preserves the expected energy decay while maintaining high-quality interface and bulk meshes. In particular, the tangential redistribution remains effective in the small $\Delta t$ regime and robust with respect to variations of $\mu$ and $\gamma_0$.

The rest of this paper is organized as follows. 
Section~\ref{sec:model} introduces the free-boundary problem and presents the continuous energy law. Section~\ref{sec:ALE-FEM} develops the semi-discrete scheme and the associated energy law. Section~\ref{sec:ALE-fully discrete} develops the fully discrete scheme and establishes the well-posedness and the energy-dissipation of the scheme. Section~\ref{sec:numerical} reports numerical experiments for free-boundary flows in two and three dimensions. Section~\ref{sec:conclusion} concludes the paper and discusses future work. Throughout this paper, vector-valued quantities will be denoted by boldface notations and matrix-valued quantities will be denoted by blackboard notations, such as $\Ltwov:=(\Ltwo)^d$ and $\bbD\in\bbR^{d\times d}$ for $d=2,3$.

\section{A free-boundary problem for the Navier-Stokes equations}
\label{sec:model}

In this section, we introduce the free-boundary problem of the Navier–Stokes equations and propose a CTV formulation for determining surface velocity. The energy-dissipation property of the model is also presented.

\subsection{The model problem}

We consider the Navier-Stokes flow with surface tension in a time-varying domain $\Omega(t) \subset \mathbb{R}^d$, 
\begin{subequations}\label{eq-model}
\begin{align}
\partial_t\bmu + (\bmu\cdot\nabla)\bmu-\nabla\cdot \bm\sigma  =\bmf,\quad 
\nabla \cdot \bmu  =0\quad \hbox{in}\;\; \Omega(t), \label{eq-model-1}\\
\bm\sigma \bmn = -\gamma_0 H \bmn \quad \hbox{on}\;\;\Gamma(t),\label{eq-model-3}\\
\bmu(0)  = \bmu^0 \quad\hbox{in}\;\;\Omega(0),\label{eq-model-4}
\end{align}
\end{subequations}
where $\Gamma(t)=\partial\Omega(t)$ is the free boundary of $\Omega(t)$, 
$\bm\sigma := 2\mu \mathbb {D}(\bmu)-p \mathbb I$ is the stress tensor, $p$ stands for the hydrodynamic pressure, and $\mathbb {D}(\bmu) = \frac{1}{2}\left(\nabla \bmu + (\nabla \bmu)^{T}\right)$ denotes the deformation matrix. 
The external force $\bmf$ is assumed in $\Ltwov[\Omega(t)]$. In \eqref{eq-model-3},  $H$ is the mean curvature of the free surface, $\bmn$ is the unit outward normal, and $\gamma_0 > 0$ is the surface tension coefficient. 	
In particular, let $\id$ denote the identity function on $\Gamma(t)\subset \mathbb{R}^d$. It holds that \cite{Deckelnick_Dziuk_Elliott_2005, Barrett15,15BGNESAIM, BARRETT2013511},
\begin{equation} \label{eq-mesh-v-2}
-\Delta_\Gamma \id  = H\bmn  \quad \hbox{on}\;\;\Gamma(t), 
\end{equation}
where $\Delta_{\Gamma}$ and $\ng$ are the Laplace–Beltrami and surface gradient  operators on $\Gamma(t)$, respectively.

\subsection{The ALE formulation}

The ALE formulation relies on an ALE velocity $\bmw$ which drives the motion of $\Omega(t)\cup\Gamma(t)$. Usually, the surface velocity $\bmw_{\Gamma}:=\bmw |_{\Gamma(t)}$ is determined first and then extended into the bulk domain. 
Here we employ the minimal-deformation-rate extension of $\bmw_{\Gamma}$ \cite{Hu2026_Droplet} and let $\bmw$ be the solution to the problem
\begin{align}\label{eq:bulk-MDR}
-\nabla\cdot\mathbb D(\bmw)=0\quad \hbox{in}\;\;\Omega(t), \quad 
\bmw =\bmw_{\Gamma} \quad \hbox{on}\;\;\Gamma(t).
\end{align}

Write $\Omega^0=\Omega(0)$, $\Gamma^0=\Gamma(0)$, and let
$\Phibf:\Omega^0\times[0,T]\to\mathbb R^d$ be the ALE map generated by the bulk velocity $\bmw$, satisfying
\[
    \partial_t\Phibf(\bm\xi,t)
    =
    \bmw(\Phibf(\bm\xi,t),t),
    \quad
    \Phibf(\bm\xi,0)=\bm\xi.
\]
Then $\Omega(t)=\Phibf(\Omega^0,t)$ and $\Gamma(t)=\Phibf(\Gamma^0,t)$.
Associated with $\Phibf$ and $\bmw$, the ALE time derivative is defined by
\[
\partial_t^\bullet\bmu:=\partial_t\bmu+\bmw\cdot\nabla\bmu.
\]
Then the ALE form of \eqref{eq-model} is given by
\begin{subequations}\label{ALE-model}
\begin{align}
\partial_t^\bullet\bmu + (\bmu-\bmw)\cdot\nabla\bmu-\nabla\cdot \bm\sigma  
    =\bmf,  \quad \nabla \cdot \bmu  =0
\quad \hbox{in}\;\; \Omega(t), \label{ALE-model-1}\\
-\nabla\cdot\mathbb D(\bmw)=0\quad \hbox{in}\;\;\Omega(t),  \label{ALE-model-2}\\
\bm\sigma \bmn = -\gamma_0 H \bmn,\quad 
-\Delta_\Gamma\mathrm{id}=H\bmn,\quad 
    \bmw =\bmw_{\Gamma} \quad \hbox{on}\;\;\Gamma(t),\label{ALE-model-3}\\
\bmu(0)  = \bmu^0 \quad\hbox{in}\;\;\Omega(0).\label{ALE-model-4}
\end{align}
\end{subequations}
To close the model, we also need equations for defining the surface velocity $\bmw_{\Gamma}$. This will be accomplished by the CTV formulation in the next subsection.

\subsection{The CTV formulation}

We now derive a weak formulation of \eqref{ALE-model} combined with the CTV formulation for defining $\bmw_{\Gamma}$. For a domain $D\subset \bbR^d$ and a surface $\Sigma$, the inner products on $\Ltwo[D]$ and $\Ltwo[\Sigma]$ are, respectively, denoted by
\[
(u, v)_D := \int_D uv,\quad 
\langle u,v \rangle_{\Sigma} := \int_{\Sigma} uv.
\]
When $D=\Omega(t)$ and $\Sigma=\Gamma(t)$, the subscripts of the inner products are omitted for convenience.
Moreover, the tangential space of $\BH^1(\Gamma(t))$ modulo $\bbR^d$ is defined by
\begin{align} \label{eq:V-Gamma}
\BU_{\taubf}(\Gamma(t)) := \big\{\bm\eta\in [H^1(\Gamma(t))/\bbR]^d: \bm\eta\cdot\bmn=0\big\}.    
\end{align}

As introduced in \eqref{eq-normal-mesh-strong}--\eqref{eq-tangential-mesh-weak}, the surface velocity $\bmw_{\Gamma}$ is first decomposed into $\bmw_{\Gamma}= \bmw_{\bmn} +\bmw_{\bm\tau}$, where the normal velocity of $\Gamma(t)$ is given by 
% $\bmw_{\bmn}:=(\bmu\cdot\bmn)\bmu$
$\bmw_{\bmn}:=(\bmu\cdot\bmn)\bmn$ 
and the tangential velocity $\bmw_{\bm\tau}\in \BU_{\taubf}(\Gamma(t))$ solves the elliptic problem on the surface
\begin{align} \label{eq:CTV}
\mathscr{A}(\Gamma;\bmw_{\taubf},\bm\eta)
= -\alpha\mathscr{A}(\Gamma;\mathrm{id},\bm\eta)
-\varepsilon \left\langle \mathbb{P}\bmu, \bm\eta  \right\rangle
\quad \forall\, \bm\eta \in \BU_{\taubf}(\Gamma(t)),
\end{align}
where $\alpha:=\gamma_0/\mu$, $\mathbb{P} := \bbI-\bmn\otimes\bmn$, $\bbI$ is the identity matrix, and 
\[
\mathscr{A}(\Gamma;\bm\xi,\bm\eta):=\langle \nabla_\Gamma\bm\xi, \nabla_\Gamma \bm\eta \rangle_{\Gamma}.
\]
Moreover, $\varepsilon\ge 0$ is a relaxation parameter which is small enough and independent of the physical parameters. Actually, $\varepsilon=0$ is allowed in our algorithm and theoretical analysis.

Multiplying both sides of \eqref{eq-mesh-v-2} by $\bm\eta\in \BU_{\taubf}(\Gamma(t))$ and integrating over $\Gamma(t)$, integration by parts yields $\mathscr{A}(\Gamma;\mathrm{id},\bm\eta)=0$. Hence, this term does not contribute to the tangential velocity of $\Gamma(t)$ at the continuous level. However, the corresponding term is nonzero at the discrete level and plays a crucial role in the stability of numerical solutions. Furthermore, setting $\varepsilon=0$ leads to $\bmw_{{\taubf}}=0$, corresponding to purely normal motion of the boundary.

Multiplying both sides of \eqref{eq-mesh-v-2} by $\xi\bmn$ and integrating the results by parts over $\Gamma(t)$, we obtain the weak form of \eqref{eq-mesh-v-2}: 
\begin{equation*}
\langle H,\xi\rangle = \mathscr{A}(\Gamma;\mathrm{id},\xi\bmn)
\quad \forall\, \xi\in H^1(\Gamma(t)).    
\end{equation*}
Since $\nabla\cdot\bmu=0$ in $\Omega(t)$ and $(\bmu-\bmw)\cdot\bmn=0$ on $\Gamma(t)$, integration by parts shows
\begin{align*}
((\Bu-\Bw)\cdot\nabla\bmu,\bmv)
=\frac12[((\Bu-\Bw)\cdot\nabla\bmu,\bmv)
-((\Bu-\Bw)\cdot\nabla\bmv,\bmu)
+(\bmu,\bmv\nabla\cdot\bmw)].
\end{align*}
Multiplying \eqref{ALE-model-1} by $\bmv\in \Honev[\Omega(t)]$ and integrating the stress term by parts, we obtain
\[
(\partial_t^\bullet\bmu,\bmv) 
+\frac{1}{2}(\bmu,\bmv\nabla\cdot\Bw)
+\Co(\Omega;\bmu-\bmw,\bmu,\bmv)
+ \mathcal{A}(\Omega;\bmu,\bmv) - \mathcal{B}(\Omega;p,\bmv)
+\gamma_0\langle H\bmn,\bmv\rangle = (\bmf,\bmv),
\]
where the bilinear and trilinear forms are defined by
\begin{align*}
\mathcal{A}(\Omega;\bmu,\bmv) :=\,& 
2\mu(\mathbb D(\bmu),\mathbb D(\bmv))_\Omega,\\
\mathcal{B}(\Omega;q,\bmv) :=\,& (q,\nabla\cdot\bmv)_\Omega, \\
\Co(\Omega;\Bz,\bmu,\bmv) :=\,& 
\frac12[(\bm z\cdot\nabla\bmu,\bmv)_\Omega
-(\Bz\cdot\nabla\bmv,\bmu)_\Omega].
\end{align*}
The weak form of \eqref{ALE-model-2} is to find $\bmw\in\Honev[\Omega(t)]$ such that $\bmw=\bmw_{\Gamma}$ on $\Gamma(t)$ and 
\[
\left(\mathbb D(\bmw),\mathbb D(\bm\chi)\right)_\Omega =0
\quad \forall \bm\chi\in \BH_0^1(\Omega(t)).
\]

Finally, collecting all variational equations above, we obtain the weak formulation of the free-boundary flow problem: find
\[
(\bmu,p,H,\bmw_{\taubf},\bmw)\in \Honev[\Omega(t)]\times L^2(\Omega(t))
   \times L^2(\Gamma(t)) \times \BU_{\taubf}(\Gamma(t))\times \BH^1(\Omega(t))
\]
such that $\bmw=\bmw_{\bm\tau} + (\bmu\cdot\Bn)\Bn$ on $\Gamma(t)$, $\bmu(0) = \bmu^0$ in $\Omega^0$, and
\begin{subequations}\label{eq:weak}
\begin{align} 
&(\partial_t^{\bullet} \bmu, \bmv)
+\frac{1}{2}(\bmu,\bmv\nabla\cdot\Bw)
+ \mathcal{O}(\Omega;\bmu-\bmw,\bmu,\bmv) \notag \\
&\hspace{12.5mm} +\mathcal{A}(\Omega;\bmu,\bmv)    
    -\mathcal{B}(\Omega;p,\bmv) 
    +\gamma_0\left\langle H, \bmv\cdot\bmn\right\rangle 
    =(\bmf,\bmv) \quad \forall\,\Bv\in\Honev[\Omega(t)],
    \label{eq:weak-u}\\
&\mathcal{B}(\Omega;q,\bmu) =0 \quad 
    \forall\,q\in\Ltwo[\Omega(t)], \label{eq:weak-p}\\
&\mathscr{A}(\Gamma;\bmw_{\taubf},\bm\eta) = 
    -\alpha\mathscr{A}(\Gamma;\mathrm{id},\bm\eta)
    -\varepsilon\langle \mathbb{P}\bmu,\bm\eta\rangle
    \quad \forall\,\bm\eta\in \BU_{\taubf}(\Gamma(t)), 
        \label{eq:weak-wtau} \\ 
&\langle H,\xi\rangle = \mathscr{A}(\Gamma;\mathrm{id},\xi\bmn)
    \quad \forall\,\xi\in H^1(\Gamma(t)),
    \label{eq:weak-H}\\
&(\mathbb D(\bmw),\mathbb D(\bm\chi)) = 0\quad
    \forall\,\bm\chi\in \BH_0^1(\Omega(t)).
    \label{eq:weak-w} 
\end{align}
\end{subequations}
The system is coupled with the equation for the flow map 
\begin{align} \label{eq-weak-form-flow-map}
\partial_t\Phibf=\bmw\circ\Phibf, \quad \Phibf(0) = \mathrm{id}
\quad\hbox{on}\;\;\Omega^0\cup\Gamma^0.
\end{align}

\subsection{Total energy-dissipation}
\label{sec:continuous-energy}

To end this section, we present the energy stability of problem \eqref{eq:weak}. 

\begin{theorem}\label{thm:continuous-energy}
Suppose $\Gamma(t)$ is a non-degenerate, closed, and smooth surface and $(\bmu,p,H,\bmw_{\taubf},\bmw)$ is a sufficiently smooth solution to problem
\eqref{eq:weak}. Define $Q(t):=\{(\Bx,s): \Bx\in\Omega(s),\; s\in (0,t)\}$. There exists a constant $\varepsilon_0>0$ depending only on the geometry of $Q(t)$ such that, for any $\varepsilon\in [0,\varepsilon_0)$,
\begin{align*}
E(t) + \mu \int_0^t \Big[\|\bbD(\Bu)\|^2_{\bbL^2(\Omega(s))} 
+\frac12 |\bmw_{\taubf}|_{\BH^1(\Gamma(s))}^2\Big]\D s
\le  e^{ 2(1+\mu)t} \big(E(0) + \|\Bf\|_{\BL^2(Q(t))}^2\big),
\end{align*}
where $E(t) := \frac12\|\bmu\|^2_{\BL^2(\Omega(t))}
+\gamma_0|\Gamma(t)|$ and $|\Gamma(t)|$ denotes the Hausdorff measure of $\Gamma(t)$. In particular, when $\bmf=\bm0$ and $\varepsilon=0$, the total energy is dissipative: $E’ \le 0$. 
\end{theorem}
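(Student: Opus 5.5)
We test \eqref{eq:weak-u} with $\bmv=\bmu$, \eqref{eq:weak-p} with $q=p$, and \eqref{eq:weak-wtau} with $\bm\eta=\bmw_{\taubf}$, then combine the resulting identities with the shape derivative of the surface area.

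\emph{Kinetic energy.} By the Reynolds transport theorem along the ALE map generated by $\bmw$,
\[
\frac{\D}{\D t}\frac12\|\bmu\|^2 = (\partial_t^\bullet\bmu,\bmu) + \frac12(\bmu,\bmu\nabla\cdot\bmw).
\]
Since $\mathcal{O}(\Omega;\bmu-\bmw,\bmu,\bmu)=0$ by antisymmetry and $\mathcal{B}(\Omega;p,\bmu)=0$ by \eqref{eq:weak-p}, testing \eqref{eq:weak-u} with $\bmv=\bmu$ gives
\[
\frac{\D}{\D t}\frac12\|\bmu\|^2 + 2\mu\|\bbD(\bmu)\|^2 + \gamma_0\langle H,\bmu\cdot\bmn\rangle = (\bmf,\bmu).
\]

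\emph{Surface energy.} The transport formula for surface area gives
\[
\frac{\D}{\D t}|\Gamma(t)| = \langle \nabla_\Gamma\mathrm{id},\nabla_\Gamma\bmw_\Gamma\rangle = \mathscr{A}(\Gamma;\mathrm{id},\bmw_\Gamma).
\]
Split $\bmw_\Gamma=(\bmu\cdot\bmn)\bmn+\bmw_{\taubf}$. Using \eqref{eq:weak-H} with $\xi=\bmu\cdot\bmn$,
\[
\mathscr{A}(\Gamma;\mathrm{id},(\bmu\cdot\bmn)\bmn)=\langle H,\bmu\cdot\bmn\rangle,
\]
so the capillary term in the momentum identity cancels. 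The remainder is $\gamma_0\mathscr{A}(\Gamma;\mathrm{id},\bmw_{\taubf})$. Testing \eqref{eq:weak-wtau} with $\bm\eta=\bmw_{\taubf}$ and multiplying by $\mu$ gives
\[
\gamma_0\mathscr{A}(\Gamma;\mathrm{id},\bmw_{\taubf}) = -\mu|\bmw_{\taubf}|^2_{\BH^1(\Gamma)} - \mu\varepsilon\langle\bbP\bmu,\bmw_{\taubf}\rangle .
\]
This is where $\alpha=\gamma_0/\mu$ matters: it turns the tangential surface term into a dissipation term. (At the continuous level this term is also zero by \eqref{eq:nabla_Gamma_id}, which we do not need.) Altogether,
\[
E'(t) + 2\mu\|\bbD(\bmu)\|^2 + \mu|\bmw_{\taubf}|^2_{\BH^1(\Gamma)} = (\bmf,\bmu) - \mu\varepsilon\langle\bbP\bmu,\bmw_{\taubf}\rangle .
\]
For $\varepsilon=0$ and $\bmf=\bm0$ this already gives $E'\le0$.

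\emph{The $\varepsilon$ term (main obstacle).} We must absorb $\mu\varepsilon\langle\bbP\bmu,\bmw_{\taubf}\rangle$ using the dissipation. Since $\bmw_{\taubf}$ lives in $\BU_{\taubf}(\Gamma)$, which is modulo constants, we fix the representative with zero mean, or note that the pairing is invariant under the choice. A Poincaré inequality on $\Gamma(t)$ then gives $\|\bmw_{\taubf}\|_{L^2(\Gamma)}\le C_P|\bmw_{\taubf}|_{\BH^1(\Gamma)}$. The trace inequality gives $\|\bmu\|_{L^2(\Gamma)}\le C_T\|\bmu\|_{\BH^1(\Omega)}$, and Korn's inequality gives $\|\bmu\|_{\BH^1}^2\le C_K(\|\bbD(\bmu)\|^2+\|\bmu\|^2)$. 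All three constants are controlled uniformly in $s\in(0,t)$ by the geometry of $Q(t)$, and this uniformity is the delicate part. Young's inequality then yields
\[
\mu\varepsilon|\langle\bbP\bmu,\bmw_{\taubf}\rangle| \le \frac{\mu}{2}|\bmw_{\taubf}|^2_{\BH^1} + \frac{\mu\varepsilon^2C_P^2C_T^2C_K}{2}\big(\|\bbD(\bmu)\|^2+\|\bmu\|^2\big).
\]
Choose $\varepsilon_0$ so that $\varepsilon_0^2C_P^2C_T^2C_K\le2$. The gradient part is then absorbed into $2\mu\|\bbD(\bmu)\|^2$, leaving at least $\mu\|\bbD(\bmu)\|^2$, and the remaining term is bounded by $\mu\|\bmu\|^2\le2\mu E$. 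Also $(\bmf,\bmu)\le\frac12\|\bmf\|^2+E$.

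\emph{Conclusion.} These bounds give
\[
E' + \mu\|\bbD(\bmu)\|^2 + \frac{\mu}{2}|\bmw_{\taubf}|^2_{\BH^1} \le 2(1+\mu)E + \|\bmf\|^2 .
\]
Grönwall's inequality, with the dissipation kept inside the integrated form, yields the stated estimate with factor $e^{2(1+\mu)t}$.
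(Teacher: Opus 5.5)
Your proposal is correct and follows the paper's proof essentially step for step. It uses the same test functions ($\bmv=\bmu$, $\bm\eta=\mu\bmw_{\taubf}$, $\xi=\gamma_0\bmu\cdot\bmn$), the same energy law, the same absorption of the $\varepsilon$-term via trace, Korn, Poincar\'e and Young, and the same Gronwall step; your direct use of Korn's second inequality in place of the paper's rigid-motion splitting with a multiplicative trace inequality is only cosmetic.

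One small correction: $\langle\bbP\bmu,\bmw_{\taubf}\rangle$ is not invariant under adding constants, and the zero-mean representative need not be tangential. The right justification is that on a closed surface no nonzero constant vector is everywhere tangent, so $\bmw_{\taubf}$ is unique, and $\|\bmw_{\taubf}\|_{\BL^2(\Gamma)}\le C|\bmw_{\taubf}|_{\BH^1(\Gamma)}$ holds on the tangential space by compactness.
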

\begin{proof}
Take $\Bv=\Bu$ in \eqref{eq:weak-u}, $\etabf=\mu\bmw_{\taubf}$ in \eqref{eq:weak-wtau}, and $\xi=\gamma_0\bmw\cdot\Bn$ in \eqref{eq:weak-H} and sum these equations. Using the facts that $\Div\Bu=0$ in $\Omega(t)$ and $\Bu\cdot\Bn=\bmw\cdot\Bn$ on $\Gamma(t)$, we have
\begin{equation}\label{eq:energy0}
\begin{aligned} 
\frac{1}{2}\int_{\Omega(t)}\big(\partial_t^\bullet|\Bu|^2
+|\Bu|^2\nabla\cdot\Bw\big)
&+ 2\mu\|\bbD(\Bu)\|_{\bbL^2(\Omega(t))}^2 
+\mu|\bmw_{\taubf}|_{\BH^1(\Gamma(t))}^2 \\
&+\gamma_0\mathscr{A}(\Gamma;\mathrm{id},\bmw)
+\varepsilon\mu \langle\bbP\Bu,\bmw_{\taubf}\rangle
= (\bmf,\bmu).
\end{aligned}
\end{equation}
By the Reynolds transport theorem \cite{BGN2020HoNA}, we also have
\begin{align*}
\int_{\Omega(t)}\big(\partial_t^\bullet|\Bu|^2+|\Bu|^2\nabla\cdot\Bw\big) 
= \frac{\mathrm{d}}{\mathrm{d}t}\int_{\Omega(t)} |\Bu|^2,
\quad \mathscr{A}(\Gamma;\mathrm{id},\bmw) 
=\frac{\Rd}{\Rd t}|\Gamma(t)|.
\end{align*}
Substituting the two equalities into \eqref{eq:energy0} yields the energy law
\begin{align*}
E'(t) + 2\mu\|\bbD(\Bu)\|_{\bbL^2(\Omega(t))}^2 
+\mu|\bmw_{\taubf}|_{\BH^1(\Gamma(t))}^2
% +\gamma_0\Ca_{\Gamma}(\mathrm{id},\bmw)
= (\bmf,\bmu) -\varepsilon\mu \langle\bbP\Bu,\bmw_{\taubf}\rangle.
\end{align*}
This shows $E'(t)\le 0$ for $\Bf={\bf 0}$ and $\varepsilon=0$.

Next we integrate the above energy law over $(0,t)$ and apply H\"{o}lder's inequality to the right-hand side. It follows that 
\begin{align}\label{eq:Et}
& E(t) + \mu\int_0^t \big[2\|\bbD(\Bu)\|^2_{\bbL^2(\Omega(s))} 
+|\bmw_{\taubf}|_{\BH^1(\Gamma(s))}^2\big]\D s \notag\\
=\, & E(0) + \int_0^t \big[(\Bf,\Bu)_{\Omega(s)}
- \varepsilon \mu \langle\bbP\Bu,\bmw_{\taubf}\rangle\big]\D s\notag\\
\le\,& E(0) + \int_0^t\big(\|\Bf\|_{\BL^2(\Omega(s))}^2 
   + \|\Bu\|^2_{\BL^2(\Omega(s))}
+\varepsilon\mu\|\Bu\|_{\Ltwov[\Gamma(s)]} 
\|\bmw_{\taubf}\|_{\Ltwov[\Gamma(s)]}\big)\D s.
\end{align}
The symmetric-gradient operator $\bbD$ has a kernel consisting of rigid-body motions
\begin{equation}\label{eq:def_rigid_space}
\BR(\Omega(t)) :=\left\{\bm a + \bbB(\bmx-\bmx_c(t)):
\bm a\in\mathbb R^d,\;\bbB\in\mathbb R^{d\times d},\;
\bbB^T = -\bbB\right\},
\end{equation}
where $\Bx_c(t):=\big(\int_{\Omega(t)}\bmx\,\D\bmx\big) \big/ \big(\int_{\Omega(t)} \,\Rd\bmx\big)$. Due to this kernel, Korn's inequality does not hold on $\Honev[\Omega(t)]$. To circumvent this issue, we introduce the rigid-motion-free space
\[
\BU(\Omega(t)) :=
   \left\{
      \bmv\in \BH^1(\Omega(t))
      \,\middle|\,
      (\bmv,\bm r)_{\Omega(t)}=0
      \quad
      \forall\,\Br\in\BR(\Omega(t))
   \right\}.
\]
Split $\Bu=\Bu_\perp + \Br$ with $\Bu_\perp\in \BU(\Omega(t))$ and $\Br\in\BR(\Omega(t))$. Clearly $\bbD(\Bu) =\bbD(\Bu_\perp)$.
It is known that Korn's inequality holds for $\Bu_\perp$ (\cite[Theorem 62.F]{Zeidler1988IV}), that is, there exists a constant $C_{\rm K}>0$ depending only on $\Omega(t)$ such that
\begin{equation}\label{eq:Korn_continuous}
C_{\mathrm K} \|\Bu_\perp\|_{\BH^1(\Omega(t))}\le \|\bbD(\Bu_\perp)\|_{\bbL^2(\Omega(t))}
=\|\bbD(\Bu)\|_{\bbL^2(\Omega(t))}.
\end{equation}

By the trace inequality and the norm equivalence on the finite dimensional space $\BR(\Omega(s))$, there exists a generic constant $C$ that depends only on $\Omega(s)$ such that
\begin{align*}
\|\Bu\|_{\Ltwov[\Gamma(s)]}^2  
\le\,& C\|\Bu\|_{\Ltwov[\Omega(s)]}
\|\Bu\|_{\Honev[\Omega(s)]} \\
\le\,& C\|\Bu\|^2_{\Ltwov[\Omega(s)]} + C_{\rm K}^2 |\Bu_\perp|^2_{\Honev[\Omega(s)]} 
    +C_{\rm K}^2 |\Br|^2_{\Honev[\Omega(s)]}\\
\le\,& C\|\Bu\|^2_{\Ltwov[\Omega(s)]} + \|\bbD(\Bu)\|^2_{\bbL^2(\Omega(s))} 
    +C \|\Br\|^2_{\Ltwov[\Omega(s)]}.
\end{align*}
Since $\Bu_\perp$ and $\Br$ are $L^2(\Omega(s))$-orthogonal, we have 
$\|\Br\|_{\Ltwov[\Omega(s)]} \le \|\Bu\|_{\Ltwov[\Omega(s)]} $. 
This yields
\begin{equation}\label{eq:uL2}
\|\Bu\|_{\Ltwov[\Gamma(s)]}^2  
\le C\|\Bu\|^2_{\Ltwov[\Omega(s)]} + \|\bbD(\Bu)\|^2_{\bbL^2(\Omega(s))}.
\end{equation}
Similarly, applying Poincar\'{e}'s inequality to $\bmw_{\taubf}$ yields
$\|\bmw_{\taubf}\|_{\Ltwov[\Gamma(s)]}
\le C|\bmw_{\taubf}|_{\Honev[\Gamma(s)]}$. By Young's inequality, there exists a constant $C_0>0$ depending only on $\Omega(s)$ such that 
\[
\varepsilon\mu\|\Bu\|_{\Ltwov[\Gamma(s)]}
\|\bmw_{\taubf}\|_{\Ltwov[\Gamma(s)]}
\le C_0\varepsilon^2\mu\big(\|\Bu\|^2_{\Ltwov[\Omega(s)]} + \|\bbD(\Bu)\|^2_{\bbL^2(\Omega(s))}\big)
+\frac{\mu}{2}|\bmw_{\taubf}|^2_{\Honev[\Gamma(s)]}.
\]
Set $\varepsilon_0:=C_0^{-1/2}$ and substitute the inequality into \eqref{eq:Et}. For any $\varepsilon\in[0,\varepsilon_0)$, we have
\begin{align*}
& E(t) + \mu\int_0^t \bigg[\|\bbD(\Bu)\|^2_{\bbL^2(\Omega(s))} 
+\frac{1}{2}|\bmw_{\taubf}|_{\BH^1(\Gamma(s))}^2\bigg]\D s \notag\\
\le\,& E(0) + \int_0^t\big[\|\Bf\|_{\BL^2(\Omega(s))}^2 
   + (1+\mu)\|\Bu\|^2_{\BL^2(\Omega(s))}\big]\D s.
\end{align*}
The proof is finished by using the Gronwall inequality.
\end{proof}

Based on Theorem~\ref{thm:continuous-energy}, we can control
the artificial tangential velocity $\bmw_{\taubf}$ with $\varepsilon$. This is the result of the following corollary.

\begin{corollary}\label{lem:tangential-velocity-stability}
There exists a constant $C>0$ depending only on the geometry of $Q(t)$, but not on $\varepsilon$ and physical parameters, such that
\begin{align}
\mu\int_0^t|\bmw_{\taubf}(s)|_{\BH^1(\Gamma(s))}^2 \D s
\le C\varepsilon^2 e^{ 2(1+\mu) t} \big[E(0) + \|\Bf\|_{\BL^2(Q(t))}^2\big].
\end{align}
\end{corollary}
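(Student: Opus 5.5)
The bound comes straight from the tangential equation \eqref{eq:weak-wtau}, not from the energy identity itself. I will test \eqref{eq:weak-wtau} with $\bm\eta=\bmw_{\taubf}(s)\in\BU_{\taubf}(\Gamma(s))$, which is admissible. By \eqref{eq:nabla_Gamma_id}, the geometric term $\alpha\mathscr{A}(\Gamma;\mathrm{id},\bmw_{\taubf})=\alpha\langle H\bmn,\bmw_{\taubf}\rangle$ vanishes at the continuous level. This leaves
\[
|\bmw_{\taubf}|_{\BH^1(\Gamma(s))}^2=-\varepsilon\langle\bbP\Bu,\bmw_{\taubf}\rangle_{\Gamma(s)}
\le \varepsilon\|\Bu\|_{\Ltwov[\Gamma(s)]}\|\bmw_{\taubf}\|_{\Ltwov[\Gamma(s)]},
\]
where I use $|\bbP\Bu|\le|\Bu|$. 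The Poincar\'e inequality on the quotient space, already used in the proof of Theorem~\ref{thm:continuous-energy}, gives $\|\bmw_{\taubf}\|_{\Ltwov[\Gamma(s)]}\le C|\bmw_{\taubf}|_{\Honev[\Gamma(s)]}$. Dividing through then yields $|\bmw_{\taubf}|_{\BH^1(\Gamma(s))}\le C\varepsilon\|\Bu\|_{\Ltwov[\Gamma(s)]}$. Here $C$ depends only on $\Gamma(s)$, hence only on the geometry of $Q(t)$, and not on any physical parameter.

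Next I square this bound, multiply by $\mu$, and integrate over $(0,t)$. The trace estimate \eqref{eq:uL2} from the proof of Theorem~\ref{thm:continuous-energy} gives
\[
\mu\int_0^t|\bmw_{\taubf}|_{\BH^1(\Gamma(s))}^2\,\D s\le C\varepsilon^2\int_0^t\Big(\mu\|\Bu\|^2_{\Ltwov[\Omega(s)]}+\mu\|\bbD(\Bu)\|^2_{\bbL^2(\Omega(s))}\Big)\D s.
\]
Theorem~\ref{thm:continuous-energy} controls the dissipation term $\mu\int_0^t\|\bbD(\Bu)\|^2$ directly by $K:=e^{2(1+\mu)t}\big(E(0)+\|\Bf\|^2_{\BL^2(Q(t))}\big)$. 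The same theorem gives the pointwise bound $\|\Bu(s)\|^2_{\Ltwov[\Omega(s)]}\le 2E(s)\le 2e^{2(1+\mu)s}\big(E(0)+\|\Bf\|^2\big)$, since the left-hand side there is monotone in $t$ and $s\le t$.

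The main obstacle is the term $\mu\int_0^t\|\Bu\|^2\,\D s$. Bounding it crudely by $2\mu t K$ introduces a factor $\mu t$, which the statement does not allow. To absorb it, I would integrate the pointwise bound exactly:
\[
\mu\int_0^t 2e^{2(1+\mu)s}\,\D s\le \frac{\mu}{1+\mu}e^{2(1+\mu)t}\le e^{2(1+\mu)t}.
\]
This gives $\mu\int_0^t\|\Bu\|^2\le K$ with no extra factor. Combining the two pieces yields the claimed bound $C\varepsilon^2 K$, with $C$ depending only on the geometric constants (Poincar\'e and trace) of $Q(t)$. The requirement $\varepsilon<\varepsilon_0$ enters only through the application of Theorem~\ref{thm:continuous-energy}. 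If $\varepsilon_0$ must be fixed in advance, the bound can be stated for $\varepsilon\in[0,\varepsilon_0)$.
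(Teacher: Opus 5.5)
Your proposal is correct and follows the paper's proof: you test \eqref{eq:weak-wtau} with $\bmw_{\taubf}$, drop the vanishing geometric term, apply Poincar\'e and the trace bound \eqref{eq:uL2}, and then invoke Theorem~\ref{thm:continuous-energy}. Your explicit computation $\mu\int_0^t 2e^{2(1+\mu)s}\,\D s\le e^{2(1+\mu)t}$ fills in the step the paper calls a ``direct consequence,'' and it is needed to avoid an extra factor $\mu t$; note that this pointwise bound comes from applying the theorem at each $s\le t$ and using $\|\Bf\|_{\BL^2(Q(s))}\le\|\Bf\|_{\BL^2(Q(t))}$, not from any monotonicity of the left-hand side.
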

\begin{proof}
Taking $\bm\eta= \bmw_{{\taubf}}$ in \eqref{eq:weak-wtau} and using the identity 
$\mathscr{A}(\Gamma;\mathrm{id},\bmw_{\taubf})=0$,
% $\Ca_\Gamma({\rm id}, \bmw_{\taubf}) =0$, 
we obtain
\begin{align*}
\|\nabla_\Gamma \bmw_{\taubf}\|_{\bbL^2(\Gamma(s))}^2
\le \varepsilon \|\bmu\|_{\Ltwov[\Gamma(s)]}
    \|\bmw_{{\taubf}}\|_{\Ltwov[\Gamma(s)]}
\le C\varepsilon \|\bmu\|_{\Ltwov[\Gamma(s)]}
\|\nabla_\Gamma\bmw_{{\taubf}}\|_{\bbL^2(\Gamma(s))}.
\end{align*}
Together with \eqref{eq:uL2}, this shows 
\[
\|\nabla_\Gamma \bmw_{\taubf} \|_{\bbL^2(\Gamma(s))} \le C\varepsilon \big[\|\Bu\|_{\Ltwov[\Omega(s)]} + \|\bbD(\Bu)\|_{\bbL^2(\Omega(s))}\big].
\]
The result is a direct consequence of Theorem~\ref{thm:continuous-energy}.
\end{proof}

\section{Semi-discrete ALE finite element method}
\label{sec:ALE-FEM}

In this section, we propose the finite element approximation of \eqref{eq:weak} on a time-dependent mesh $\Ct_h(t)$ which evolves with a discrete ALE mapping $\Phibf_h(\cdot,t)$ from an initial triangulation of $\Omega^0$.

\subsection{Finite element meshes}
\label{sec:ALE-mesh}

Assume that the given smooth initial domain $\Omega^0\subset\mathbb R^d$ is partitioned by a shape-regular and quasi-uniform iso-parametric triangulation $\Ct_h^0$ with mesh size $h$ and polynomial degree $k$, 
which fits the initial interface $\Gamma^0=\Gamma(0)$. 
Each simplex $K\in\Ct_h^0$ is closed and possibly curved, and is the image of the reference simplex $\widehat K$ under a polynomial mapping of degree $k$, 
denoted by \(\BF_K:\widehat K\to K\). Here $\BF_K$ is actually the parametrization of $K$. Each boundary simplex $K\in\Ct_h^0$, with one edge/face lying on the discrete boundary, contains a possibly curved boundary edge/face that approximates $\Gamma^0$ with geometric accuracy $O(h^{k+1})$. 
Such an approximation can be constructed using Lenoir's iso-parametric finite elements.
The initial domain $\Omega^0$ is thus approximated by the discrete domain $\Omega_h^0$.

{ 
% The geometry nodes determining $\Ct^0_h$ are denoted by $\Bx_j^0 \in \mathbb R^d$, $j=1,\cdots,N$. 
During the evolution, the mesh connectivity is kept, while only the geometry nodes move. 
The set of boundary faces/edges of $\Ct_h(t)$ is denoted by $\Cf_h(t)$. 
It provides a triangulation of $\Gamma_h(t)$ for  $t\in [0, T]$.
The corresponding discrete domain and boundary are defined by $\Omega_h(t)$ and $\Gamma_h(t)$, respectively.
}

Without causing confusion, we shall abbreviate the notations by omitting the $t$ variable, and simply write $\Omega_h$, $\Gamma_h$, $\Ct_h$,  $\Cf_h$ and so on in this section.

\subsection{Finite element spaces}
\label{sec:fem-spaces}

For each $K\in\Ct_h$, we still denote the iso-parametric transformation from $\wh{K}$ to $K$ by $\BF_K$. For any face/edge $F\subset\partial K$, we write $\BF_F:=\BF_K|_{F}$. Suppose $k\ge 2$ is an integer. Using these iso-parametric transformations, we define the finite element spaces on the evolving
domain and boundary by
\begin{align*}
U(\Ct_h):=\,& \big\{
v_h\in H^1(\Omega_h):
v_h\circ \BF_K\in   P_k(\widehat K)
\text{ for } K\in\Ct_h \big\}, \\
M(\Ct_h):=\,& \big\{v_h\in H^1(\Omega_h)/\bbR:
v_h\circ \BF_K\in   P_{k-1}(\widehat K)
\text{ for } K\in\Ct_h \big\}, \\
U(\Cf_h) :=\,&\big\{\xi_h\in H^1(\Gamma_h):
\xi_h\circ \BF_F\in P_k(\widehat{F})
\text{ for } F\in\Cf_h \text{ and }
\widehat{F}=\BF_F^{-1}(F)\big\},
\end{align*}
where $P_k(\widehat K)$ and $P_k(\widehat F)$ denote the spaces of $k^{\rm th}$-order polynomials on $\widehat K$ and $\wh{F}$, respectively. The vector-valued finite element spaces on $\Ct_h$ and $\Cf_h$ are denoted by $\BU(\Ct_h)$ and $\BU(\Cf_h)$, respectively.

% Define $\BU_0(\Ct_h)= \BU(\Ct_h)\cap\Honev[\Omega_h]$.
{ Define $\BU_0(\Ct_h)= \BU(\Ct_h)\cap \bm{H}_0^1(\Omega_h)$.}
It is known that the Taylor-Hood finite element pair admits the discrete inf-sup condition (cf. \cite{BGN2020HoNA}) 
\begin{equation} \label{eq:rigid-free-inf-sup}
\sup_{{\bf 0}\ne\Bv_h\in\BU_0(\Ct_h)}
\frac{\mathcal{B}(\Omega_h; q_h,\Bv_h)}{\|\Bv_h\|_{\bm{H}^1(\Omega_h)}}
\ge C_{\inf}\|q_h\|_{L^2(\Omega_h)}
\quad \forall\, q_h\in M(\Ct_h),
\end{equation}
where the constant $C_{\inf}>0$ depends only on $\Omega_h$ and $k$.

Note that $\Gamma_h$ is the union of element faces in $\Cf_h$. The unit outer normal $\Bn_h$ is discontinuous across the common boundary of any two faces $F_1,F_2\in\Cf_h$. To deal with the issue, we introduce the $L^2$-projection of $\Bn_h$ onto $\BU(\Cf_h)$, that is, $\bar\Bn_h\in \BU(\Cf_h)$ satisfies
\begin{align}\label{eq:nh}
\langle\bar\bmn_h, \bmv_h\rangle_{\Gh}
= \langle \bmn_h, \bmv_h\rangle_{\Gh} \quad \forall \bmv_h\in\BU(\Cf_h).
\end{align}
Then we define the following tangential finite element space
\begin{align} \label{eq:Uh-Gamma}
\BU_{\taubf}(\Cf_h) := \big\{\bm\eta_h\in [U(\Cf_h)/\bbR]^d: \bm\eta_h\cdot
\bar\Bn_h=0\big\}.
\end{align}
The projection matrix onto $\BU_{\taubf}(\Cf_h)$ is given by $\mathbb{P}_h=\bbI-\bar{\bmn}_h\otimes\bar{\bmn}_h$.

\subsection{Semi-discrete schemes}
\label{sec:semi-disc}

Let $\Ci_h: C(\Gamma_h)\to U(\Cf_h)$ be the nodal interpolation operator of iso-parametric elements. Suppose $\Bf$ is defined on $\Oh(t)$ for all $t\in [0,T]$. 
We propose an ALE evolving finite element approximation to \eqref{eq:weak} as follows: find
\[
(\Bu_h,p_h,H_h,\bmw_{\taubf,h},\Bw_h)\in 
\BU(\Ct_h) \times M(\Ct_h) \times U(\Cf_h)\times \BU_{\taubf}(\Cf_h)
\times \BU(\Ct_h)
\]
such that $\Bw_h=\Ci_h ((\bmu_{h} \cdot  \bar\bmn_h) \bar\bmn_h)
+\bmw_{{\taubf},h}$ on $\Gamma_h$ and 
\begin{subequations}\label{eq:weakh}
\begin{align} 
&\left(\partial_{t,h}^{\bullet} \bmu_h, \bmv_h\right)_{\Omega_h} +  \frac{1}{2} (\bmu_h,\bmv_h\nabla\cdot\bmw_h)_{\Omega_h}
+ \mathcal{O}(\Oh;\bmu_h-\bmw_h,\bmu_h,\bmv_h)  \notag \\ 
&\hspace{10mm} +\mathcal{A}(\Oh;\bmu_h,\bmv_h)
- \mathcal{B}(\Oh; p_h,\bmv_h)
+ \gamma_0 \left\langle H_h, \Ci_h(\bmv_h\cdot\bar\bmn_h)\right\rangle_{\Gamma_h} 
= (\bmf,\bmv_h)_{\Omega_h},   \\
&\mathcal{B}(\Oh;q_h,\bmu_h) = 0, \\ 
&\langle H_h , \xi_h\rangle_{\Gamma_h} =  
    \mathscr{A}(\Gamma_h;\mathrm{id}, \Ci_h(\xi_h \bar\bmn_h)) ,\\ 
&\mathscr{A}(\Gamma_h;\bmw_{{\taubf},h},  \bm\eta_h) 
= -\alpha\mathscr{A}(\Gamma_h;\mathrm{id},\bm\eta_h) 
-\varepsilon\langle \mathbb{P}_h\bmu_h ,\bm\eta_h \rangle_{\Gamma_h}, \\
&(\mathbb{D}(\bmw_h),\mathbb{D}( \bmchi_h))_{\Omega_h} =0,
\end{align} 
\end{subequations}
for all $(\bmv_h, q_h, \xi_h, \bm\eta_h, \bmchi_h)$ in 
$\BU(\Ct_h) \times M(\Ct_h) \times U(\Cf_h)\times \BU_{\taubf}(\Cf_h)
\times \BU_0(\Ct_h)$ { with the given initial fluid velocity $\Bu_h(0) \in \BU(\Ct_h)$}. 
Here ${\rm id}$ stands for the identity mapping on $\Gamma_h$ and the ALE time derivative is defined by
\begin{align} \label{eq:Dth}
\partial_{t,h}^\bullet\Bv =\partial_t\Bv +\bmw_h\cdot\nabla\Bv.
\end{align}
The system \eqref{eq:weakh} is coupled with the semi-discrete flow map $\Phibf_h$, satisfying
\begin{align}\label{eq:Phih}
\partial_t\Phibf_h(\xibf,t)
=\bmw_h(\Phibf_h(\xibf,t),t),\quad 
\Phibf_h(\xibf,0) = \xibf 
\quad\forall\,\xibf\in\Oh^0\cup\Gh^0.    
\end{align}

Next we show that the semi-discrete problem inherits the energy-dissipation property of the continuous model. Define the semi-discrete total energy by
\[
E_h(t) := \frac{1}{2}\|\bmu_h(t)\|_{\BL^2(\Omega_h(t))}^2
+ \gamma_0|\Gamma_h(t)|.
\]
 
\begin{theorem} \label{thm:semidiscrete-energy}
Suppose $\Gamma_h(t)$ is a non-degenerate and closed surface. Define $Q_h(t):=\{(\Bx,s): \Bx\in\Omega_h(s),\; s\in (0,t)\}$. 
There exists a constant $\varepsilon_1$ depending only on the geometry of $Q_h(t)$ such that, for any $\varepsilon\in [0,\varepsilon_1)$,
\begin{align*}
E_h(t) + \mu \int_0^t \Big[\|\bbD(\Bu_h)\|^2_{\bbL^2(\Omega_h(s))} 
+\frac12 |\bmw_{\taubf,h}|_{\BH^1(\Gamma_h(s))}^2\Big]\D s
\le  e^{ 2(1+\mu) t} 
\big[E_h(0) + \|\Bf\|_{\BL^2(Q_h(t))}^2\big].
\end{align*}
In particular, when $\bmf=\bm0$ and $\varepsilon=0$, the energy is dissipative: $E_h' \le 0$. 
\end{theorem}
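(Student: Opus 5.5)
We mimic the proof of Theorem~\ref{thm:continuous-energy} at the semi-discrete level. The test functions are $\bmv_h=\bmu_h$ in the momentum equation, $\bm\eta_h=\mu\bmw_{\taubf,h}$ in the CTV equation, and $\xi_h=\gamma_0\,\bmu_h\cdot\bar\bmn_h$ projected suitably in the curvature equation. The last choice needs care: $\bmu_h\cdot\bar\bmn_h$ is not in $U(\Cf_h)$, so we take $\xi_h=\gamma_0\Ci_h(\bmu_h\cdot\bar\bmn_h)\in U(\Cf_h)$. We need $\bmw_{\taubf,h}\in\BU_{\taubf}(\Cf_h)$, which holds by construction. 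Then $\gamma_0\langle H_h,\Ci_h(\bmu_h\cdot\bar\bmn_h)\rangle_{\Gamma_h}$ in the momentum equation equals $\gamma_0\mathscr{A}(\Gamma_h;\mathrm{id},\Ci_h(\Ci_h(\bmu_h\cdot\bar\bmn_h)\bar\bmn_h))=\gamma_0\mathscr{A}(\Gamma_h;\mathrm{id},\Ci_h((\bmu_h\cdot\bar\bmn_h)\bar\bmn_h))$, because nodal interpolation of a product depends only on nodal values. This is the normal part of $\bmw_h|_{\Gamma_h}$.

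Next, the CTV equation tested with $\mu\bmw_{\taubf,h}$ gives $\mu|\bmw_{\taubf,h}|_{\BH^1(\Gamma_h)}^2=-\gamma_0\mathscr{A}(\Gamma_h;\mathrm{id},\bmw_{\taubf,h})-\varepsilon\mu\langle\bbP_h\bmu_h,\bmw_{\taubf,h}\rangle_{\Gamma_h}$, using $\mu\alpha=\gamma_0$. This is the key cancellation: adding the two identities, the capillary terms combine into $\gamma_0\mathscr{A}(\Gamma_h;\mathrm{id},\bmw_h|_{\Gamma_h})$. The nonzero discrete term $\mathscr{A}(\Gamma_h;\mathrm{id},\bmw_{\taubf,h})$ is exactly what is needed to reconstruct the full mesh velocity. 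We then use the discrete transport identities on the evolving iso-parametric mesh. First, $\frac{\D}{\D t}|\Gamma_h(t)|=\mathscr{A}(\Gamma_h;\mathrm{id},\bmw_h)$, since $\Gamma_h$ moves with the finite element velocity $\bmw_h|_{\Gamma_h}$ via \eqref{eq:Phih} and the first variation of area is $\int\nabla_{\Gamma_h}\mathrm{id}:\nabla_{\Gamma_h}\bmw_h$. Second, $\frac{\D}{\D t}\frac12\|\bmu_h\|^2_{\Omega_h}=(\partial^\bullet_{t,h}\bmu_h,\bmu_h)+\frac12(|\bmu_h|^2,\nabla\cdot\bmw_h)$ by the Reynolds transport theorem for ALE-evolving finite element functions. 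The convective term vanishes by antisymmetry, $\mathcal{O}(\Omega_h;\cdot,\bmu_h,\bmu_h)=0$, and the pressure term vanishes by the discrete divergence constraint with $q_h=p_h$. No mass conservation argument is needed because the $\frac12(\bmu_h,\bmv_h\nabla\cdot\bmw_h)$ term is included. This yields the exact discrete energy law
\[
E_h'(t)+2\mu\|\bbD(\bmu_h)\|^2_{\bbL^2(\Omega_h)}+\mu|\bmw_{\taubf,h}|^2_{\BH^1(\Gamma_h)}=(\bmf,\bmu_h)_{\Omega_h}-\varepsilon\mu\langle\bbP_h\bmu_h,\bmw_{\taubf,h}\rangle_{\Gamma_h}.
\]
Dissipation for $\bmf=\bm0$, $\varepsilon=0$ is then immediate.

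For the general estimate we copy the Gronwall argument. We bound $|\bbP_h\bmu_h|\le|\bmu_h|$ pointwise; if $|\bar\bmn_h|\ne1$ this holds only up to a constant, which is absorbed into $C_0$. We apply the trace inequality combined with Korn's inequality on the rigid-motion-free complement in $\Omega_h(s)$ to get the analogue of \eqref{eq:uL2}. We use the Poincaré inequality for $\bmw_{\taubf,h}$, which has zero mean as an element of $[U(\Cf_h)/\bbR]^d$. Young's inequality with $\varepsilon_1:=C_0^{-1/2}$ then absorbs half of the tangential term and the $\|\bbD(\bmu_h)\|^2$ contribution. The main obstacle is making the constants (trace, Korn, Poincaré) depend only on the geometry of $Q_h(t)$, uniformly in $s$. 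This follows from the non-degeneracy assumption on $\Gamma_h(s)$ and continuity in $s$, taking suprema over $s\in[0,t]$. A secondary technical point is justifying the interpolation identity $\Ci_h(\Ci_h(a)\bar\bmn_h)=\Ci_h(a\bar\bmn_h)$, which is immediate from nodality.
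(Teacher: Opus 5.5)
Your proposal is correct and follows the paper's proof essentially step by step. The ingredients match: the same test functions (up to where you place the factors $\gamma_0$ and $\mu$), the same nodal-interpolation identity $\Ci_h(\xi_h\bar\bmn_h)=\Ci_h((\bmu_h\cdot\bar\bmn_h)\bar\bmn_h)$, the same cancellation via $\mu\alpha=\gamma_0$ that reassembles $\gamma_0\mathscr{A}(\Gamma_h;\mathrm{id},\bmw_h)=\gamma_0\frac{\D}{\D t}|\Gamma_h|$, and then the Korn--trace--Poincar\'e--Young--Gronwall argument of Theorem~\ref{thm:continuous-energy}. Your remarks on $|\bar\bmn_h|\neq 1$ and on the uniformity of the constants in $s$ fill in details that the paper leaves implicit.
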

\begin{proof}
The proof is similar to that of Theorem~\ref{thm:continuous-energy}. We sketch the ideas here. Taking $\bmv_h=\bmu_h$, $q_h=p_h$, $\xi_h=\Ci_h (\bmu_h\cdot \bar{\bmn}_h)$, and $\bm\eta_h = \bmw_{\taubf,h}$ in \eqref{eq:weakh}, we obtain
\begin{align*}
\frac{1}{2}\frac{\D}{\D t}\|\Bu_h\|_{\Ltwov[\Omega_h]}^2
+2\mu\|\mathbb D(\bmu_h)\|_{\bbL^2(\Omega_h)}^2
+\gamma_0\langle H_h,\xi_h\rangle_{\Gamma_h}
= (\Bf, \bmu_h)_{\Oh}.
\end{align*}
The nodal interpolations admit $\Ci_h(\xi_h\bar\Bn_h) = \Ci_h((\bmu_h\cdot\bar\bmn_h)\bar\Bn_h)$. Combining the third and fourth equations yields
\begin{align*}
\gamma_0\langle H_h,\xi_h\rangle_{\Gamma_h}
=\gamma_0 \mathscr{A}(\Gamma_h;\mathrm{id}, \bmw_h)
+\mu|\bmw_{\taubf,h}|_{\BH^1(\Gamma_h)}^2
+\varepsilon\mu\left\langle\mathbb{P}_h\bmu_h ,\bmw_{{\taubf},h} 
\right\rangle_{\Gamma_h}.    
\end{align*}
By \eqref{eq:Phih} and the surface transport identity, we have
\(\frac{\mathrm d}{\mathrm dt}|\Gamma_h|
=\mathscr{A}(\Gamma_h;\mathrm{id}, \bmw_h) 
% \mathcal{A}_{\Gamma}(\mathrm{id}, \bmw_h)
\). 
Then
\begin{align*}
E_h' +2\mu\|\mathbb D(\bmu_h)\|_{\bbL^2(\Omega_h)}^2
+\mu|\bmw_{\taubf,h}|_{\BH^1(\Gamma_h)}^2
= (\Bf, \bmu_h)_{\Oh}
-\varepsilon\mu\langle\bbP_h\bmu_h,
\bmw_{\taubf,h}\rangle_{\Gamma_h}.
\end{align*}
The rest of the proof is parallel to that of Theorem~\ref{thm:continuous-energy}. We omit the details here. 
\end{proof}

\section{Fully discrete scheme}
\label{sec:ALE-fully discrete}

In this section, we propose a fully discrete finite element
method for solving \eqref{eq:weak}. 
Let $0=t_0<t_1<\cdots<t_M=T$ be a uniform partition of $[0,T]$ with step size $\Delta t=t_{m+1}-t_m$. We define the one-step ALE map
\[
\Phibf_h^{m}:=\mathrm{id}+\Delta t\,\bmw_h^{m}: \Oh^{m-1}\to\Oh^{m}, \quad
m\ge 0,
\]
where $\Omega_h^{m},\Bw_h^m$ are the approximate fluid domain and mesh velocity at $t_m$, respectively. The boundary of $\Oh^{m}$ is denoted by $\Gh^{m}$. It admits 
\[
\Gh^{m} = \Phibf_h^{m}(\Gh^{m-1}).
\]
Similarly, the finite element meshes of $\Oh^{m-1}$ and $\Gh^{m-1}$ are driven by $\Phibf_h^{m}$ and admit
\[
\Ct_{h}^{m} = \big\{\Phibf^{m}_h(K): K\in\Ct_h^{m-1}\big\},\quad 
\Cf_{h}^{m} = \big\{\Phibf^{m}_h(F): F\in\Cf_h^{m-1}\big\}.
\]
The Jacobian of $\Phibf^m_h$ is denoted by 
\[
J_h^m:= \det(\nabla\Phibf_h^m).
\]
For $\Delta t$ small enough, $\Phibf_h^m$ is a bijection and $J_h^m>0$. Throughout this section, we assume that all elements in $\Ct_h^m$ are nondegenerate. 

\subsection{The CTV scheme}

Based on \eqref{eq:weakh}, we adopt the idea of \cite{GaoHuLisisc,Hu2026_Droplet} to derive a fully discrete scheme. 
Let $\bmn_h^m$ be the unit outer normal on $\Gh^m$. 
Similar to \eqref{eq:nh}, we denote the $L^2$-projection of $\Bn^m_h$ onto $\BU(\Cf^m_h)$ by $\bar\Bn^m_h\in \BU(\Cf^m_h)$, satisfying
\begin{align*}
\langle\bar\bmn^m_h, \bmv_h\rangle_{\Gh^m}
= \langle \bmn^m_h, \bmv_h\rangle_{\Gh^m} \quad \forall\, \bmv_h\in\BU(\Cf_h^m).
\end{align*}
The projection matrix onto $\BU_{\taubf}(\Cf_h^m)$ is given by $\mathbb{P}_h^m=\bbI-\bar{\bmn}_h^m\otimes\bar{\bmn}_h^m$.
Moreover, let $\Ci_h^m: C(\Gamma_h^m)\to U(\Cf_h^m)$ be the nodal interpolation operator of iso-parametric elements at $t_m$. Without an abuse of notation, we also use $\Ci_h^m$ to denote the interpolation operator on vector function spaces. Suppose $\Bf$ is defined on $\Oh^m$ for all $m\ge 0$. Write $\Bf_m = \Bf(t_m)$ and $\Oh^{-1}=\Oh^0$.

Suppose the fluid velocity $\bmu_h^m$, the mesh velocity $\bmw_h^m$, the flow map $\Phibf_h^m$, and the approximate domain $\Oh^m$ with boundary $\Gh^m$ are given at time $t_m$. Define $\hat\Bu_h^m$ and $\hat\Bw_h^m$ such that  
\[
\hat\Bu_h^m\circ\Phibf_h^m :=  (J_h^m)^{-1/2}\Bu_h^m,
\quad
\hat\Bw_h^m\circ\Phibf_h^m :=  (J_h^m)^{-1/2}\Bw_h^m.
\]
The fully discrete ALE scheme is to find
\[
(\bmu_h^{m+1},p_h^{m+1},H_h^{m+1},\bmw_{{\taubf},h}^{m+1}, \bmw_h^{m+1})
\in \BU(\Ct_h^m) \times M(\Ct_h^m) \times U(\Cf_h^m)\times \BU_{\taubf}(\Cf_h^m)
\times \BU(\Ct_h^m),
\]
such that $\Bw_h^{m+1}=\Ci_h^m ((\bmu_{h}^{m+1} \cdot  \bar\bmn_h^m) \bar\bmn_h^m)
+\bmw_{{\taubf},h}^{m+1}$ on $\Gamma_h^m$ and 
\begin{subequations}\label{eq-full discretization}
\begin{align}
&\frac{1}{\Delta t}(\Bu_h^{m+1}-\hat\Bu_h^m,\Bv_h)_{\Omega_h^m} +\Co(\Oh^m;\hat\bmu_h^m-\hat\bmw_h^m,
\bmu_h^{m+1},\Bv_h) 
+\Ca(\Oh^m;\bmu_h^{m+1},\bmv_h)\notag \\
&\hspace{20mm}
 -\mathcal{B}(\Oh^m;p_h^{m+1},\bmv_h) 
 + \gamma_0\langle H_h^{m+1}, \Ci_h^m(\bmv_h\cdot \bar\bmn_h^m)\rangle_{\Gh^m} 
= (\Bf_{m+1}, \bmv_h)_{\Oh^m},   
    \label{eq-full discretization 1}\\
&\mathcal{B}(\Oh^m;q_h,\bmu_h^{m+1}) = 0,   \label{eq-divuhm}\\ 
&\langle H_h^{m+1},  {\xi}_h \rangle _{\Gh^m} = \mathscr{A}(\Gamma_h^m;\Phibf_h^{m+1}, \Ci_h^m(\xi_h \bar{\bmn}_h^m)),  \label{eq-full discretization 2}  \\[0.3em]  
&\mathscr{A}(\Gamma_h^m;
\bmw_{{\taubf},h}^{m+1}, \bm\eta_h ) 
= -\alpha\mathscr{A}(\Gamma_h^m;
\Phibf_h^{m+1}, \bm\eta_h) -\varepsilon \langle \mathbb{P}_h^{m} \bmu_h^{m+1} ,  \bm\eta_h \rangle_{\Gamma_h^m}, 
    \label{eq-full discretization 3} \\
&(\mathbb{D}( \bmw_h^{m+1}), \mathbb{D}(\bmchi_h))_{\Oh^m} = 0,
    \label{eq:wh-m1}
\end{align}
\end{subequations}
for all $(\bmv_h,q_h,\xi_h,\bm\eta_h, \bm\chi_h) \in
\BU(\Ct_h^m) \times M(\Ct_h^m) \times U(\Cf_h^m)\times \BU_{\taubf}(\Cf_h^m)
\times \BU_0(\Ct_h^m)$. 
The system is coupled with the one-step ALE mapping
\begin{align}\label{eq:Phi-hm}
\Phibf_h^{m+1}:=\mathrm{id}+\Delta t\,\bmw_h^{m+1} \quad \hbox{on}\; 
\Oh^{m}\cup\Gh^{m}, \;
m\ge 0.
\end{align}

Problem \eqref{eq-full discretization} is posed on the domains $\Oh^m$, $\Oh^{m-1}$, the boundary $\Gh^m$, and the one-step mapping $\Phibf_h^m$, which are calculated at $t_m$. Therefore, \eqref{eq-full discretization} is actually a linear scheme for computing $(\bmu_h^{m+1},p_h^{m+1},H_h^{m+1},\bmw_{{\taubf},h}^{m+1}, \bmw_h^{m+1})$. 

\subsection{Energy dissipation}
Now we establish the well-posedness and energy-dissipation property of the fully discrete scheme \eqref{eq-full discretization}. Define the discrete total energy as
\begin{equation}
\label{eq:discrete-total-energy}
E_h^{m}:= \frac{1}{2}
\|\bmu_h^{m}\|_{\BL^2(\Omega_h^{m-1})}^2
+\gamma_0|\Gamma_h^{m}|, \quad m\ge 1.
\end{equation}

\begin{theorem}\label{thm:discrete-energy-dissipation}
There exists a constant $\varepsilon_2$ which is small enough, but independent of the meshes and physical parameters such that, when $\varepsilon \le\varepsilon_2$ { and $2(1+\mu)\Delta t <1$} for any $\ell\ge 1$,
\begin{align}\label{ieq:Eh}
&E_h^{\ell} +\frac{\mu}{2}
\sum_{m=1}^{\ell}\Delta t\big(
2\|\mathbb{D}(\bmu_h^m) \|_{\bbL^2(\Oh^{m-1})}^2 
+\big|\bmw_{{\taubf},h}^m\big|_{\BH^1(\Gh^{m-1})}^2\big) \notag \\
\le\,& e^{ 2(1+\mu)t_{\ell}}\bigg[E_h^0 + 
\sum_{m=1}^{\ell}\Delta t\|\Bf_m\|^2_{\Ltwov[\Oh^{m-1}]}\bigg].
\end{align}
\end{theorem}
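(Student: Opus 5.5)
Our plan is to mimic the proof of Theorem~\ref{thm:semidiscrete-energy} at the discrete level, replacing time derivatives by one-step differences and using convexity/algebraic identities in place of transport identities. We test \eqref{eq-full discretization 1} with $\bmv_h=\bmu_h^{m+1}$, \eqref{eq-divuhm} with $q_h=p_h^{m+1}$, \eqref{eq-full discretization 2} with $\xi_h=\gamma_0\Ci_h^m(\bmu_h^{m+1}\cdot\bar\bmn_h^m)$, and \eqref{eq-full discretization 3} with $\bm\eta_h=\mu\bmw_{\taubf,h}^{m+1}$. The trilinear term $\Co$ vanishes by skew-symmetry and the pressure term cancels. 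Since nodal interpolation gives $\Ci_h^m(\xi_h\bar\bmn_h^m)=\gamma_0\,\Ci_h^m((\bmu_h^{m+1}\cdot\bar\bmn_h^m)\bar\bmn_h^m)=\gamma_0(\bmw_h^{m+1}-\bmw_{\taubf,h}^{m+1})$ on $\Gh^m$, combining the curvature and tangential equations (using $\alpha\mu=\gamma_0$) yields
\[
\gamma_0\langle H_h^{m+1},\xi\rangle_{\Gh^m}=\gamma_0\mathscr{A}(\Gh^m;\Phibf_h^{m+1},\bmw_h^{m+1})+\mu|\bmw_{\taubf,h}^{m+1}|^2_{\BH^1(\Gh^m)}+\varepsilon\mu\langle\bbP_h^m\bmu_h^{m+1},\bmw_{\taubf,h}^{m+1}\rangle_{\Gh^m}.
\]
This is exactly why $\alpha=\gamma_0/\mu$ is needed: it makes the $\alpha$-term cancel the tangential part of the curvature pairing.

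Next we treat the two difference terms. For the kinetic part we use $(a-b,a)=\frac12(|a|^2-|b|^2+|a-b|^2)$ together with the change of variables built into $\hat\Bu_h^m$. Since $\hat\Bu_h^m\circ\Phibf_h^m=(J_h^m)^{-1/2}\Bu_h^m$, we get $\|\hat\Bu_h^m\|_{\Ltwov[\Oh^m]}=\|\Bu_h^m\|_{\Ltwov[\Oh^{m-1}]}$ exactly. This is the purpose of the $J^{-1/2}$ scaling, and it matches the index convention in $E_h^m$. For the surface part we use $\Delta t\,\mathscr{A}(\Gh^m;\mathrm{id}+\Delta t\bmw_h^{m+1},\bmw_h^{m+1})=\mathscr{A}(\Gh^m;\Phibf_h^{m+1},\Phibf_h^{m+1}-\mathrm{id})\ge\frac12\big(\|\nabla_{\Gh^m}\Phibf_h^{m+1}\|^2-\|\nabla_{\Gh^m}\mathrm{id}\|^2\big)$. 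We then invoke the standard inequality $\frac12\int_{\Gh^m}|\nabla_{\Gh^m}\Phibf|^2\ge\frac{d-1}{2}\cdot$ (area ratio) $\ge|\Phibf(\Gh^m)|-\ldots$; concretely, by arithmetic–geometric means on singular values, $|\Gh^{m+1}|\le\frac{1}{d-1}\int_{\Gh^m}\frac12|\nabla_\Gamma\Phibf|^2\cdot\ldots$. In $2$D this is simply $|\Gh^{m+1}|-|\Gh^m|\le\mathscr{A}(\Gh^m;\mathrm{id},\Phibf-\mathrm{id})+\frac12\|\nabla_\Gamma(\Phibf-\mathrm{id})\|^2$ combined with the pointwise $|\partial_s\Phibf|\le\frac12(1+|\partial_s\Phibf|^2)$. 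For $d=3$ we follow the BGN/\cite{Hu2026_Droplet} argument. This gives $\gamma_0(|\Gh^{m+1}|-|\Gh^m|)\le\gamma_0\Delta t\,\mathscr{A}(\Gh^m;\Phibf_h^{m+1},\bmw_h^{m+1})$. We expect this step to be the main technical point, but it is already standard in the cited works.

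Summing, we arrive at
\[
E_h^{m+1}-E_h^m+\Delta t\big(2\mu\|\bbD(\bmu_h^{m+1})\|^2+\mu|\bmw_{\taubf,h}^{m+1}|^2_{\BH^1}\big)\le\Delta t(\Bf_{m+1},\bmu_h^{m+1})-\Delta t\,\varepsilon\mu\langle\bbP_h^m\bmu_h^{m+1},\bmw_{\taubf,h}^{m+1}\rangle.
\]
The right side is bounded as in Theorem~\ref{thm:continuous-energy}, using a discrete Korn inequality with rigid-motion splitting, a trace inequality giving \eqref{eq:uL2} for $\bmu_h^{m+1}$ on $\Gh^m$, Poincaré on mean-free $\bmw_{\taubf,h}$, $|\bbP_h^m|\le 1$, and Young. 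Choosing $\varepsilon_2$ small absorbs $\frac\mu2|\bmw_{\taubf,h}|^2$ and $\mu\|\bbD\bmu_h\|^2$, leaving $\Delta t(\|\Bf_{m+1}\|^2+(1+\mu)\|\bmu_h^{m+1}\|^2)$. Summing over $m$ and applying a discrete Gronwall lemma, where the implicit term requires $2(1+\mu)\Delta t<1$, gives \eqref{ieq:Eh} with factor $(1-2(1+\mu)\Delta t)^{-\ell}\approx e^{2(1+\mu)t_\ell}$. We expect the delicate point here to be the uniformity of the Korn/trace constants in $\varepsilon_2$, which we would take uniform under the nondegeneracy assumption.
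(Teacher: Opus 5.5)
Up to the last step your argument is the paper's: the same test functions (only the placement of $\gamma_0$ and $\mu$ differs), and the same removal of $\mathscr{A}(\Gh^m;\Phibf_h^{m+1},\bmw_{\taubf,h}^{m+1})$ through $\alpha\mu=\gamma_0$. It also uses the identity $2(a-b,a)=|a|^2-|b|^2+|a-b|^2$ together with $\|\hat\bmu_h^m\|_{L^2(\Oh^m)}=\|\bmu_h^m\|_{L^2(\Oh^{m-1})}$, the BGN inequality $\mathscr{A}(\Gh^m;\Phibf_h^{m+1},\Phibf_h^{m+1}-\mathrm{id})\ge|\Gh^{m+1}|-|\Gh^m|$, and trace/Korn/Poincar\'e/Young for the $\varepsilon$-term. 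Your general sentence with ``$\frac{d-1}{2}\cdot$(area ratio)'' is garbled: for curves, $\frac12 s^2\ge\frac12 s$ fails when $s<1$. What you actually need is the pointwise bound $\frac12\big(|\nabla_\Gamma\Phibf|^2-|\nabla_\Gamma\mathrm{id}|^2\big)\ge J_\Gamma\Phibf-1$. In terms of singular values this reads $\frac12(s^2-1)\ge s-1$ for $d=2$ and $\frac12(s_1^2+s_2^2)-1\ge s_1s_2-1$ for $d=3$, and it holds on curved isoparametric faces as well.

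The step that does not go through as written is the final Gronwall argument. You end with $(1+\mu)\Delta t\|\bmu_h^{m+1}\|^2$ on the right-hand side. Using $\frac12\|\bmu_h^{m+1}\|^2\le E_h^{m+1}$, this becomes an implicit term $2(1+\mu)\Delta t\,E_h^{m+1}$ and an amplification factor $(1-2(1+\mu)\Delta t)^{-\ell}$. But $(1-x)^{-1}>e^{x}$ for $x\in(0,1)$, so this factor is strictly larger than $e^{2(1+\mu)t_\ell}$. Under the sole hypothesis $2(1+\mu)\Delta t<1$ it is not even bounded uniformly in $\Delta t$. The ``$\approx$'' therefore does not give \eqref{ieq:Eh}; your constants only yield something like $e^{4(1+\mu)t_\ell}$ under the stronger restriction $4(1+\mu)\Delta t\le 1$.

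The fix, which is what the paper does, is to keep sharper constants. Use $(\Bf_{m+1},\bmu_h^{m+1})\le\frac12\|\Bf_{m+1}\|^2+\frac12\|\bmu_h^{m+1}\|^2$. Choose $\varepsilon_2$ so that the $\varepsilon$-term costs at most $\frac{\mu}{2}\big(\|\bmu_h^{m+1}\|^2+\|\bbD(\bmu_h^{m+1})\|^2\big)+\frac{\mu}{2}|\bmw_{\taubf,h}^{m+1}|^2_{H^1(\Gh^m)}$. The per-step remainder is then $\frac{1+\mu}{2}\Delta t\|\bmu_h^{m+1}\|^2+\Delta t\|\Bf_{m+1}\|^2$, and the implicit coefficient is $x=(1+\mu)\Delta t<\frac12$. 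Hence $(1-x)^{-1}=1+\frac{x}{1-x}\le 1+2x\le e^{2x}$, which gives exactly the factor $e^{2(1+\mu)t_\ell}$. This is precisely the role of the hypothesis $2(1+\mu)\Delta t<1$.
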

\begin{proof}
Choose $\bmv_h=\bmu_h^{m+1}$, $q_h=p_h^{m+1}$, and 
% $\xi_h=\gamma_0\Ci_h^m(\bmu_h^{m+1}\cdot\bar{\bmn}_h^m)$ 
{ $\xi_h=\Ci_h^m(\bmu_h^{m+1}\cdot\bar{\bmn}_h^m)$ }
in \eqref{eq-full discretization}.
Notice the identities that $\Co(\Oh^m;\hat\bmu_h^m-\hat\bmw_h^m,
\bmu_h^{m+1},\Bu_h^{m+1})=0$ and  
\begin{align*}
2(\Bu_h^{m+1}-\hat\Bu_h^m,\Bu_h^{m+1})_{\Omega_h^m} 
=\,& \|\Bu_h^{m+1}\|_{\Ltwov[\Oh^m]}^2
-\|\hat\Bu_h^{m}\|_{\Ltwov[\Oh^m]}^2
+\|\Bu_h^{m+1} -\hat\Bu_h^m\|_{\Ltwov[\Oh^m]}^2 \\
=\,& \|\Bu_h^{m+1}\|_{\Ltwov[\Oh^m]}^2
-\|\Bu_h^{m}\|_{\Ltwov[\Oh^{m-1}]}^2
+\|\Bu_h^{m+1} -\hat\Bu_h^m\|_{\Ltwov[\Oh^m]}^2.
\end{align*}
By combining all equations in \eqref{eq-full discretization}, we obtain
\begin{align}\label{eq-dis-energy-1}
&\frac{1}{2\Delta t}\big(\|\bmu_h^{m+1}\|_{\BL^2(\Omega_h^m)}^2
-\|\bmu_h^m\|_{\BL^2(\Omega_h^{m-1})}^2
+\|\bmu_h^{m+1}-\hat\bmu_h^m\|_{\BL^2(\Omega_h^m)}^2\big)
+2\mu\|\bbD(\bmu_h^{m+1})\|_{\bbL^2(\Oh^m)}^2    \notag \\
&\hspace{31mm} +\gamma_0
\mathscr{A}(\Gamma_h^m;\Phibf_h^{m+1},
\Ci_h^m(\xi_h \bar{\bmn}_h^m))=(\Bf_{m+1},\bmu_h^{m+1})_{\Oh^m}.
\end{align}

Taking $\bm\eta_h=-\mu\bmw_{{\taubf},h}^{m+1}$ in \eqref{eq-full discretization 3} gives
\begin{equation*}
-\gamma_0\mathscr{A}(\Gamma_h^m;\Phibf_h^{m+1}, \bmw_{\taubf,h}^{m+1})
=\mu\big|\bmw_{\taubf,h}^{m+1}\big|_{\BH^1(\Gh^m)}^2
+ \varepsilon\mu\big\langle \mathbb{P}_h^m\bmu_h^{m+1},\bmw_{{\taubf},h}^{m+1}\big\rangle_{\Gh^m}.
\end{equation*}
The last term on the left-hand side of \eqref{eq-dis-energy-1} admits
\begin{align}\label{ieq:A-Gammah}
&\gamma_0\mathscr{A}(\Gamma_h^m;\Phibf_h^{m+1},\Ci_h^m(\xi_h \bar{\bmn}_h^m)) 
    \notag \\ 
=\,&  \gamma_0\mathscr{A}(\Gamma_h^m;\Phibf_h^{m+1},
    \bmw_{h}^{m+1} - \bmw_{{\taubf},h}^{m+1} ) \notag \\
=\,&\frac{\gamma_0}{\Delta t}
\mathscr{A}(\Gamma_h^m;\Phibf_h^{m+1}, \Phibf_h^{m+1}-\mathrm{id}) 
+\mu \big|\bmw_{\taubf,h}^{m+1}\big|_{\BH^1(\Gh^m)}^{2}  
+\varepsilon\mu  \big\langle \mathbb{P}_h^m\bmu_h^{m+1},\bmw_{{\taubf},h}^{m+1}
\big\rangle_{\Gh^m} \notag \\
\ge\,&\frac{\gamma_0}{\Delta t}(|\Gamma_h^{m+1}|-|\Gamma_h^m|)
+\mu \big|\bmw_{\taubf,h}^{m+1}\big|_{\BH^1(\Gh^m)}^{2}  
+\varepsilon\mu  \langle \mathbb{P}_h^m\bmu_h^{m+1},\bmw_{{\taubf},h}^{m+1}
\rangle_{\Gh^m},
\end{align}
where we have used $\mathscr{A}(\Gamma_h^m;\Phibf_h^{m+1},\Phibf_h^{m+1}-\mathrm{id})\geq |\Gamma_h^{m+1}|-|\Gamma_h^m|$ in the last inequality (see \cite{BARRETT20084281}). 

Next we estimate $\langle \mathbb{P}_h^m\bmu_h^{m+1},\bmw_{{\taubf},h}^{m+1}
\rangle_{\Gh^m}$. 
By arguments similar to \eqref{eq:uL2}, there exists a constant $C>0$ depending only on $\Oh^m$ such that
\begin{equation*}
\|\Bu_h^{m+1}\|_{\Ltwov[\Gh^m]}  
\le C\|\Bu_h^{m+1}\|_{\Ltwov[\Oh^m]} 
+ \|\bbD(\Bu_h^{m+1})\|_{\bbL^2(\Oh^m)}.
\end{equation*}
Moreover, using Poincar\'{e}'s inequality, we have
\[
\|\bmw^{m+1}_{\taubf,h}\|_{\Ltwov[\Gamma_h^m]}
\le C|\bmw^{m+1}_{\taubf,h}|_{\Honev[\Gamma_h^m]}.
\]
The H\"{o}lder inequality shows that
\begin{align*}
\varepsilon\big|\langle \mathbb{P}_h^m\bmu_h^{m+1},\bmw_{{\taubf},h}^{m+1}
\rangle_{\Gh^m}\big|
\le \,& \varepsilon \|\bmu_h^{m+1}\|_{\Ltwov[\Gh^m]}
\|\bmw_{{\taubf},h}^{m+1}\|_{\Ltwov[\Gh^m]} \\
\le\,& C_1\varepsilon^2\big[\|\Bu_h^{m+1}\|_{\Ltwov[\Oh^m]}^2 
+ \|\bbD(\Bu_h^{m+1})\|_{\bbL^2(\Oh^m)}^2\big]
+ \frac{1}{2}|\bmw^{m+1}_{\taubf,h}|_{\Honev[\Gamma_h^m]}^2.   
\end{align*}
Choosing $\varepsilon\le \varepsilon_2:={ (2C_1)^{-1/2}}$ and substituting the inequality into \eqref{ieq:A-Gammah}, we obtain
\begin{align*}
&\gamma_0\mathscr{A}(\Gamma_h^m;\Phibf_h^{m+1},\Ci_h^m(\xi_h \bar{\bmn}_h^m)) 
    \notag \\ 
\ge\,&\frac{\gamma_0}{\Delta t}(|\Gamma_h^{m+1}|-|\Gamma_h^m|)
+\frac{\mu}{2} \big|\bmw_{\taubf,h}^{m+1}\big|_{\BH^1(\Gh^m)}^{2}  
-{ \frac{\mu}{2}}
\big(\|\Bu_h^{m+1}\|_{\Ltwov[\Oh^m]}^2 
+ \|\bbD(\Bu_h^{m+1})\|_{\bbL^2(\Oh^m)}^2\big).
\end{align*}
Furthermore, substituting this result into \eqref{eq-dis-energy-1}, { and applying Young's inequality}, we end up with 
\begin{align} \label{ieq:E-m1}
&E_h^{m+1}-E_h^m
+\mu\Delta t\|\bbD(\bmu_h^{m+1})\|_{\bbL^2(\Oh^m)}^2
+\frac{\mu}{2}\Delta t\big|\bmw_{\taubf,h}^{m+1}\big|_{\BH^1(\Gh^m)}^{2} 
    \notag \\  
\le\,& { \frac{1+\mu}{2}} \Delta t\|\Bu_h^{m+1}\|_{\Ltwov[\Oh^m]}^2
+\Delta t \|\Bf_{m+1}\|^2_{\Ltwov[\Oh^m]}.
\end{align}
The proof is finished by summing the inequality for $m=0,\cdots,\ell-1$ and using the discrete Gronwall inequality { together with
\(
\frac{1+\mu}{1-(1+\mu)\Delta t}
\le 2(1+\mu),
\)
follows from the constraint
$2(1+\mu)\Delta t<1$.}
\end{proof}

\begin{corollary}\label{cor:well-posedness-full-discrete}
Under the condition of Theorem~\ref{thm:discrete-energy-dissipation}, problem \eqref{eq-full discretization} has a unique solution at each time step. 
\end{corollary}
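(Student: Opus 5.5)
The plan is to use the fact that \eqref{eq-full discretization}--\eqref{eq:Phi-hm} is a \emph{linear} square system for $(\bmu_h^{m+1},p_h^{m+1},H_h^{m+1},\bmw_{\taubf,h}^{m+1},\bmw_h^{m+1})$. All geometric quantities ($\Oh^m$, $\Gh^m$, $\bar\bmn_h^m$, $\Ci_h^m$, $\hat\bmu_h^m$, $\hat\bmw_h^m$) are known at step $m$. The unknown $\Phibf_h^{m+1}$ enters only affinely, through $\Phibf_h^{m+1}=\mathrm{id}+\Delta t\,\bmw_h^{m+1}$.

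\textbf{Step 1: reduce to uniqueness.} I will check that the system is square. The trial and test spaces $\BU(\Ct_h^m)$, $M(\Ct_h^m)$, $U(\Cf_h^m)$ and $\BU_{\taubf}(\Cf_h^m)$ coincide. For $\bmw_h^{m+1}$, the boundary values are prescribed by $\Ci_h^m((\bmu_h^{m+1}\cdot\bar\bmn_h^m)\bar\bmn_h^m)+\bmw_{\taubf,h}^{m+1}$ and the interior degrees of freedom are tested with $\BU_0(\Ct_h^m)$, so the counts again match. Hence existence follows from uniqueness. It therefore suffices to show that the homogeneous problem has only the trivial solution. In that problem we set $\hat\bmu_h^m=\bm 0$ and $\Bf_{m+1}=\bm0$, and drop the data terms $\mathscr{A}(\Gamma_h^m;\mathrm{id},\cdot)$, so that $\Phibf_h^{m+1}$ is replaced by $\Delta t\,\bmw_h^{m+1}$ in \eqref{eq-full discretization 2}--\eqref{eq-full discretization 3}.

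\textbf{Step 2: energy identity for the homogeneous system.} I will mimic the proof of Theorem~\ref{thm:discrete-energy-dissipation} with the test functions
\[
\bmv_h=\bmu:=\bmu_h^{m+1},\quad q_h=p_h^{m+1},\quad \xi_h=\Ci_h^m(\bmu\cdot\bar\bmn_h^m),\quad \bm\eta_h=-\mu\bmw_\taubf .
\]
Writing $\bmw_\Gamma=\bmw_h^{m+1}|_{\Gh^m}=\Ci_h^m(\xi_h\bar\bmn_h^m)+\bmw_\taubf$, I use $\Co(\cdot;\cdot,\bmu,\bmu)=0$ and $\alpha\mu=\gamma_0$. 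Then
\[
\gamma_0\Delta t\,\mathscr{A}(\Gh^m;\bmw_\Gamma,\bmw_\Gamma-\bmw_\taubf)
=\gamma_0\Delta t\,|\bmw_\Gamma|^2_{\BH^1(\Gh^m)}+\mu|\bmw_\taubf|^2_{\BH^1(\Gh^m)}+\varepsilon\mu\langle\bbP_h^m\bmu,\bmw_\taubf\rangle_{\Gh^m}.
\]
Substituting this gives
\[
\tfrac1{\Delta t}\|\bmu\|^2_{\Ltwov[\Oh^m]}+2\mu\|\bbD(\bmu)\|^2_{\bbL^2(\Oh^m)}+\gamma_0\Delta t|\bmw_\Gamma|^2_{\BH^1(\Gh^m)}+\mu|\bmw_\taubf|^2_{\BH^1(\Gh^m)}=-\varepsilon\mu\langle\bbP_h^m\bmu,\bmw_\taubf\rangle_{\Gh^m}.
\]
The right-hand side is bounded exactly as in the proof of Theorem~\ref{thm:discrete-energy-dissipation}. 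This uses the trace/Korn bound analogous to \eqref{eq:uL2}, Poincar\'e's inequality on the quotient space, and $\varepsilon\le\varepsilon_2$. The bound is
\[
\tfrac{\mu}{2}\big(\|\bmu\|^2_{\Ltwov[\Oh^m]}+\|\bbD(\bmu)\|^2_{\bbL^2(\Oh^m)}\big)+\tfrac{\mu}{2}|\bmw_\taubf|^2_{\BH^1(\Gh^m)}.
\]
Since $2(1+\mu)\Delta t<1$ gives $\frac1{\Delta t}>\frac{\mu}{2}$, every term can be absorbed into the left-hand side. This yields $\bmu=\bm0$, $\bmw_\Gamma$ constant, and $|\bmw_\taubf|_{\BH^1}=0$. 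Because $\bmw_\taubf$ lives in a space modulo $\bbR^d$, the last statement means $\bmw_\taubf=0$ in $\BU_{\taubf}(\Cf_h^m)$. Consequently $\bmw_\Gamma=\Ci_h^m(0)+\bmw_\taubf$ vanishes up to that constant.

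\textbf{Step 3: recover the remaining unknowns.} With $\bmw_h^{m+1}=\bm0$ on $\Gh^m$, \eqref{eq:wh-m1} is a discrete Dirichlet problem for $\bbD$ on $\BU_0(\Ct_h^m)$. Korn's inequality holds on $\BU_0(\Ct_h^m)$ because of the zero trace, so this problem is uniquely solvable and gives $\bmw_h^{m+1}=\bm0$. Next, \eqref{eq-full discretization 2} gives $\langle H_h^{m+1},\xi_h\rangle_{\Gh^m}=\Delta t\,\mathscr{A}(\Gh^m;\bmw_h^{m+1},\cdot)=0$ for all $\xi_h$, hence $H_h^{m+1}=0$. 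Finally, testing \eqref{eq-full discretization 1} with $\bmv_h\in\BU_0(\Ct_h^m)$ kills the curvature term, since $\Ci_h^m(\bmv_h\cdot\bar\bmn_h^m)=0$. This leaves $\mathcal B(\Oh^m;p_h^{m+1},\bmv_h)=0$, and the inf-sup condition \eqref{eq:rigid-free-inf-sup} gives $p_h^{m+1}=0$ in $M(\Ct_h^m)$.

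The main obstacle is Step 2. The delicate points are the correct bookkeeping of the constant-modulo quotient for $\bmw_\taubf$ and for $\bmw_\Gamma$, and checking that the absorption really closes. The coefficient $1/\Delta t$ must dominate $\mu/2$ (guaranteed by the time-step hypothesis), and the $\bbD(\bmu)$ term must be controlled by $2\mu\|\bbD\bmu\|^2$. These are the same constants as in Theorem~\ref{thm:discrete-energy-dissipation}, so I expect the argument to carry over verbatim.
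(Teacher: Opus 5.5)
Your argument follows the paper's route. Linearity reduces existence to uniqueness. The energy test functions give $\bmu_h^{m+1}=\bm 0$ and $\bmw_{\taubf,h}^{m+1}=\bm 0$. Then \eqref{eq:wh-m1}, \eqref{eq-full discretization 2} and the inf-sup condition give $\bmw_h^{m+1}$, $H_h^{m+1}$ and $p_h^{m+1}$ in turn. Your treatment of the homogeneous system is more careful than the paper's. Replacing $\Phibf_h^{m+1}$ by $\Delta t\,\bmw_h^{m+1}$ produces the sign-definite term $\gamma_0\Delta t|\bmw_\Gamma|^2_{\BH^1(\Gh^m)}$. The paper instead quotes \eqref{ieq:E-m1}, which contains the non-sign-definite difference $|\Gamma_h^{m+1}|-|\Gamma_h^m|$ and does not by itself force $\bmu_h^{m+1}=\bm 0$. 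Your absorption step closes as you claim.

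There is one unjustified step. Step 2 only shows that $\bmw_\Gamma=\bmw_\taubf$ (using $\bmu=\bm 0$) is a constant vector $\bm c$ on $\Gh^m$, yet Step 3 starts from $\bmw_h^{m+1}=\bm 0$ on $\Gh^m$. This constant is exactly the possible non-uniqueness. If constants were admissible, $(\bmu,p,H,\bmw_\taubf,\bmw)=(\bm 0,0,0,\bm c,\bm c)$ would satisfy every homogeneous equation, since $\mathscr{A}(\Gh^m;\bm c,\cdot)=0$ and $\bbD(\bm c)=0$. Appealing to the quotient by $\bbR^d$ does not remove it; it would only make the trace of $\bmw_h^{m+1}$ ill-defined rather than zero.

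What removes it is the tangential constraint $\bm c\cdot\bar\bmn_h^m=0$. Test the definition of $\bar\bmn_h^m$ with $\bm c\,(\bm c\cdot\mathrm{id})\in\BU(\Cf_h^m)$, which is admissible for iso-parametric elements. The divergence theorem then gives
\[
0=\big\langle \bar\bmn_h^m,\bm c\,(\bm c\cdot\mathrm{id})\big\rangle_{\Gh^m}=\big\langle \bmn_h^m,\bm c\,(\bm c\cdot\mathrm{id})\big\rangle_{\Gh^m}=|\bm c|^2\,|\Oh^m|,
\]
so $\bm c=\bm 0$. With this line inserted, $\bmw_h^{m+1}=\bm 0$ on $\Gh^m$ and your Step 3 goes through. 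At this point the paper simply asserts $\bmw_{\taubf,h}^{m+1}\equiv\bm 0$.
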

\begin{proof}
Since \eqref{eq-full discretization} is a linear problem, it suffices to prove the uniqueness of the solution. Suppose $\Bu_h^m=\Bw_h^m=\Bf_{m+1}\equiv {\bf 0}$ and $2(1+\mu)\Delta t <1$. 
Then \eqref{ieq:E-m1} implies $\Bu_h^{m+1}\equiv {\bf 0}$ and $\bmw^{m+1}_{\taubf,h}\equiv {\bf 0}$. Moreover, \eqref{eq:wh-m1} implies $\Bw_h^{m+1}\equiv {\bf 0}$ and \eqref{eq-full discretization 3} implies
\[
\mathscr{A}(\Gamma_h^m; \Phibf_h^{m+1}, \bm\eta_h) = 0
\quad \forall\,\etabf_h\in \BU_{\taubf}(\Cf_h^m).
\]
Combining this equality and \eqref{eq-full discretization 2}, we find that $H_h^{m+1}\equiv 0$. Finally, using \eqref{eq-full discretization 1} and the discrete inf-sup condition \eqref{eq:rigid-free-inf-sup}, we obtain $p_h^{m+1}\equiv 0$. This shows the uniqueness of the solution. The proof is finished.
\end{proof}

\begin{remark}\upshape
The assumptions on $\varepsilon$ in Theorems~\ref{thm:continuous-energy}, \ref{thm:semidiscrete-energy}, and \ref{thm:discrete-energy-dissipation} are  sufficient conditions. They are very mild. The particular choice $\varepsilon=0$ is allowed. The parameter leads to a class of stable numerical schemes.
In Section~\ref{num:epsilon}, we further investigate the influence of $\varepsilon$ numerically and demonstrate that the proposed method is robust with respect to $\varepsilon\to 0$.
\end{remark}

\subsection{Temporal consistency}

Compared with the BGN scheme, the CTV scheme \eqref{eq-full discretization} has a key advantage in that it is temporally consistent; that is, the scheme remains stable as $\Delta t\to 0$. To state this more precisely, we prove that the solution of \eqref{eq-full discretization} converges to the weak solution of \eqref{eq:weakh} as $\Delta t\to 0$.

For convenience, we extend the numerical solutions continuously to the exteriors of their domains. Define the temporal interpolations of discrete functions
\begin{align}\label{eq-int}
\xi_h^{\Delta t}(t) = \xi_h^m 
+\frac{t-t_m}{\Delta t}(\xi_h^{m+1}-\xi_h^m) 
\quad \forall\, t\in [t_m,t_{m+1}],\;
m=0,1,\cdots,M-1,
\end{align}
where $\xi= \Bu, \hat\Bu, p, \Bw, \Bw_{\taubf}, H$ are used for numerical solutions, $\xi=E$ for the total energy, and $\xi=J$ for the discrete Jacobian. From \eqref{eq:Phi-hm} and \eqref{eq-int}, it is easy to see that
\begin{align*}
\frac{\Bu_h^{m+1}-\hat\Bu_h^m}{\Delta t} 
=\frac{\partial\Bu_h^{\Delta t}}{\partial t}
+\frac{\Bu_h^{\Delta t}(t_m) -\hat\Bu_h^{\Delta t}(t_m)}{\Delta t}.
\end{align*}
Then \eqref{eq-full discretization} can be rewritten as follows
\begin{align} 
&\Big(\frac{\partial\Bu_h^{\Delta t}}{\partial t} +\frac{\Bu_h^{\Delta t}(t_m) -\hat\Bu_h^{\Delta t}(t_m)}{\Delta t}, \bmv_h\Big)_{\Omega_h^m}
 + \Co(\Oh^m; \hat\Bu_h^{\Delta t}(t_m) -\hat\Bw_h^{\Delta t}(t_m), \bmu_h^{\Delta t}(t_{m+1}),\Bv_h)  \notag \\
&\hspace{40mm} 
+\Ca(\Oh^m; \bmu_h^{\Delta t}(t_{m+1}),\bmv_h)
-\Cb(\Oh^m; p_h^{\Delta t}(t_{m+1}),\bmv_h) \notag\\
&\hspace{40mm}
 + \gamma_0\langle H_h^{\Delta t}(t_{m+1}), \Ci_h^m(\bmv_h\cdot \bar\bmn_h^m) \rangle_{\Gh^m} 
= (\Bf(t_{m+1}), \bmv_h)_{\Oh^m},   
   \label{eq-int-uh}\\
&\mathcal{B}(\Oh^m;q_h,\bmu_h^{\Delta t}(t_{m+1})) = 0,   \label{eq-int-divuh}\\ 
&\langle H_h^{\Delta t}(t_{m+1}),  {\xi}_h \rangle _{\Gh^m} 
    = \mathscr{A}(\Gamma_h^m;\mathrm{id}+\Delta t\,\bmw_h^{\Delta t}(t_{m+1}), 
    \Ci_h^m(\xi_h \bar{\bmn}_h^m)), 
    \label{eq-int-Hh} \\ 
&\mathscr{A}(\Gamma_h^m;
\bmw_{\taubf,h}^{\Delta t}(t_{m+1}), \bm\eta_h ) 
= -\alpha\mathscr{A}(\Gh^m;\mathrm{id}+\Delta t\,\bmw_h^{\Delta t}(t_{m+1}), \bm\eta_h) \notag \\
&\hspace{40mm}
    -\varepsilon \langle \mathbb{P}_h^m \bmu_h^{\Delta t}(t_{m+1}),  
\bm\eta_h \rangle_{\Gh^m}, 
    \label{eq-int-wtau} \\
&(\mathbb{D}(\bmw_h^{\Delta t}(t_{m+1})), \mathbb{D}(\bmchi_h))_{\Oh^m} = 0,  \label{eq-int-wh}
\end{align} 
for all $(\bmv_h,q_h,\xi_h,\bm\eta_h, \bm\chi_h) \in
\BU(\Ct_h^m) \times M(\Ct_h^m) \times U(\Cf_h^m)\times \BU_{\taubf}(\Cf_h^m)
\times \BU_0(\Ct_h^m)$. 
\vspace{1mm}

\begin{theorem}
Suppose $(\Bu_h,p_h,\Bw_h, \Bw_{\taubf,h}, H_h)$ is the solution of \eqref{eq:weakh}, and that $\varepsilon_1,\varepsilon_2$ are the two constants in Theorems~\ref{thm:semidiscrete-energy} and \ref{thm:discrete-energy-dissipation}, respectively. 
For $0\le \varepsilon\le \min\{\varepsilon_1,\varepsilon_2\}$, 
\begin{align*}
\lim_{\Delta t\to 0} \big\|\xi_h^{\Delta t}(t) -\xi_h(t)\|_{\Ltwo[\Oh(t)]} =0 
\quad \forall\, t\in [0,T], \quad 
\xi= \Bu, p, \Bw, \Bw_{\taubf}, H.
\end{align*}
\end{theorem}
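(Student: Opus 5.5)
The plan is a compactness-plus-uniqueness argument in the spirit of Rothe's method, with the uniform bounds supplied by the discrete energy estimate of Theorem~\ref{thm:discrete-energy-dissipation}. Throughout, $h$ is fixed, so every finite element space is finite dimensional. Each unknown is a finite vector of nodal values in a fixed basis, since the mesh connectivity never changes. All norms on these spaces are then equivalent as long as the meshes stay nondegenerate, which is assumed.

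\textbf{Step 1 (uniform bounds).} For $\varepsilon\le\varepsilon_2$ and $2(1+\mu)\Delta t<1$, the estimate \eqref{ieq:Eh} bounds $\|\Bu_h^m\|_{\BL^2}$ and $|\Gamma_h^m|$ uniformly in $m$ and $\Delta t$. It also bounds $\sum_m\Delta t\,(\|\bbD(\Bu_h^{m})\|^2+|\bmw^m_{\taubf,h}|^2_{\BH^1})$. By norm equivalence on the finite-dimensional spaces, the nodal coefficients of $\Bu_h^m$ are then uniformly bounded.
\begin{itemize}
\item The trace of $\Bu_h^{m+1}$ on $\Gh^m$ is bounded, and hence so is the normal part $\Ci_h^m((\Bu_h^{m+1}\cdot\bar\Bn_h^m)\bar\Bn_h^m)$.
\item Testing \eqref{eq-full discretization 3} with $\bmw^{m+1}_{\taubf,h}$ and using the Poincar\'e inequality gives a bound on $\bmw_{\taubf,h}^{m+1}$. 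The right-hand side involves $\mathscr A(\Gh^m;\mathrm{id}+\Delta t\bmw_h^{m+1},\cdot)$; its $\Delta t$ term is absorbed for small $\Delta t$.
\item The discrete harmonic extension \eqref{eq:wh-m1} then bounds $\bmw_h^{m+1}$.
\item Equation \eqref{eq-full discretization 2} bounds $H_h^{m+1}$.
\item The momentum equation together with the inf-sup condition \eqref{eq:rigid-free-inf-sup} bounds $p_h^{m+1}$. This requires a bound on the discrete time difference, which is Step 2.
\end{itemize}
Because $\bmw_h$ is bounded, $\Phibf_h^{m}=\mathrm{id}+\Delta t\bmw_h^m$ is uniformly $O(\Delta t)$-close to the identity. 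Hence the meshes stay nondegenerate on $[0,T]$ for small $\Delta t$, provided the limit semi-discrete mesh is nondegenerate, and the norm-equivalence constants are uniform.

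\textbf{Step 2 (equicontinuity).} Test \eqref{eq-full discretization 1} with the nodal-coefficient representative of $(\Bu_h^{m+1}-\hat\Bu_h^m)/\Delta t$, restricted to discretely divergence-free fields. In finite dimensions this shows that the time differences of the coefficient vectors of $\Bu_h$ are bounded by the uniformly bounded data above. One also needs $\hat\Bu_h^m-\Bu_h^m=O(\Delta t)$ in coefficients; this holds because $J_h^m=1+O(\Delta t)$, since $\bmw_h$ is bounded.

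Consequently the piecewise-linear interpolants $\Bu_h^{\Delta t}$, and the node positions of the meshes, are uniformly Lipschitz in time. By Arzel\`a--Ascoli, a subsequence converges uniformly on $[0,T]$. The remaining unknowns $p,\Bw,\Bw_{\taubf},H$ depend on $\Bu_h^{m+1}$ and the current mesh through a linear system, which Corollary~\ref{cor:well-posedness-full-discrete} shows is invertible. The coefficients of this system depend continuously on the mesh and converge to the $\Delta t=0$ system, whose invertibility follows from the same energy argument with $\Delta t=0$. Therefore these unknowns converge as well, with no time regularity needed for them.

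\textbf{Step 3 (identification of the limit and uniqueness).} Pass to the limit in \eqref{eq-int-uh}--\eqref{eq-int-wh} in time-integrated form. Each term $\Delta t\,\bmw_h^{\Delta t}$ vanishes, so \eqref{eq-int-Hh} and \eqref{eq-int-wtau} become the third and fourth equations of \eqref{eq:weakh}. The quotient $(\Bu_h^{\Delta t}(t_m)-\hat\Bu_h^{\Delta t}(t_m))/\Delta t$ is the discrete analogue of the ALE transport correction. Expanding $J_h^{-1/2}=1-\tfrac{\Delta t}{2}\nabla\cdot\bmw_h+O(\Delta t^2)$ and pulling back by $\Phibf_h$, it converges to $\frac12(\Bu_h,\bmv_h\nabla\cdot\bmw_h)$ together with the convective part $\bmw_h\cdot\nabla\Bu_h$ of $\partial^\bullet_{t,h}$. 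Hence the limit satisfies \eqref{eq:weakh} and \eqref{eq:Phih}.

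The semi-discrete problem is a finite-dimensional ODE/DAE with locally Lipschitz right-hand side while the mesh is nondegenerate, so its solution is unique. The whole family therefore converges, which gives the claimed $L^2$ convergence for every $t$. The restriction $\varepsilon\le\min\{\varepsilon_1,\varepsilon_2\}$ ensures that both energy bounds are available.

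The main obstacle is Step 3's identification of the $\hat\Bu$ quotient. It requires an expansion of $J_h^m$ uniform in $m$, and the index shift between $\Oh^m$ and $\Oh(t)$, that is, test functions pulled back between consecutive meshes. I would handle this by working entirely in nodal coordinates on the fixed reference mesh $\Ct_h^0$. In those coordinates every bilinear form is a matrix depending smoothly on node positions, and the limit becomes a standard convergence argument for implicit Euler applied to a smooth finite-dimensional differential-algebraic system.
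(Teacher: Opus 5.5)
Your route differs from the paper's. The paper fixes $t=t_m$ and uses the energy bound and norm equivalence to extract a subsequence converging at that single $t$. It then passes to the limit equation by equation: it rewrites the $\bmw_{\taubf}$ equation with the factor $1+\alpha\Delta t$ and computes the limits of $(1-(J_h^m)^{-1/2})/\Delta t$ and $(\Bu_h^m-\Bu_h^m\circ(\Phibf_h^m)^{-1})/\Delta t$. It identifies $\partial_t\Bu_h$ by testing with discretely divergence-free functions and produces \emph{some} pressure for the limit equation from the inf-sup condition, with no uniqueness argument. Your approach combines Arzel\`a--Ascoli (one subsequence for all $t$, identification in time-integrated form) with uniqueness of the semi-discrete differential-algebraic system. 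That is stronger exactly there: it gives convergence of the whole family to the \emph{given} solution, which the paper's pointwise subsequence argument leaves implicit.

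There is, however, a step that fails as written: the pressure. $p_h^{m+1}$ is not determined by $\Bu_h^{m+1}$ and the current mesh. It is the multiplier in \eqref{eq-full discretization 1}, whose data include $(\Bu_h^{m+1}-\hat\Bu_h^m)/\Delta t$. Arzel\`a--Ascoli gives only weak-$*$ convergence of such difference quotients, so it does not yield $p_h^{\Delta t}(t)\to p_h(t)$ for every $t$.

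The same point is missing in Step~2. Let $\delta\Bu^m:=(\Bu_h^{m+1}-\Bu_h^m\circ(\Phibf_h^m)^{-1})/\Delta t\in\BU(\Ct_h^m)$. Testing with discretely divergence-free fields on $\Ct_h^m$ controls only the component of $\delta\Bu^m$ in that subspace. The transverse component is nonzero, because $\Bu_h^m$ is discretely divergence-free only with respect to $M(\Ct_h^{m-1})$.

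The missing idea is to treat $(\delta\Bu^m,p_h^{m+1})$ jointly as the solution of a saddle-point problem on $\Oh^m$. Its operators are the mass form and $\mathcal{B}(\Oh^m;\cdot,\cdot)$, all other terms being uniformly convergent data, and its constraint datum is
\[
\mathcal{B}(\Oh^m;q_h,\delta\Bu^m)
=-\frac{1}{\Delta t}\Big[\mathcal{B}\big(\Oh^m;q_h,\Bu_h^m\circ(\Phibf_h^m)^{-1}\big)
-\mathcal{B}\big(\Oh^{m-1};q_h\circ\Phibf_h^m,\Bu_h^m\big)\Big].
\]
Since $\mathcal{B}$ depends smoothly on the node positions, which move by $\Delta t\,\bmw_h^m$, this datum is $O(\|q_h\|_{L^2(\Oh^m)})$. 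It converges to the derivative of the divergence form along $\bmw_h$. A uniform inf-sup constant then gives the Lipschitz bound of Step~2. It also gives pointwise convergence of $\delta\Bu^m$ and $p_h^{m+1}$ to the quantities the semi-discrete system determines by differentiating its constraint. The paper does not control this transverse component either.

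This also shows you must assume $\Bu_h(0)$ is discretely divergence-free. Otherwise the first datum is $O(\Delta t^{-1})$ and equicontinuity fails at $t=0$.

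Finally, the one-step estimate $\Phibf_h^m=\mathrm{id}+O(\Delta t)$ does not by itself keep the meshes nondegenerate on $[0,T]$, because the accumulated deformation is $O(1)$. You need either the paper's standing nondegeneracy assumption made uniform in $\Delta t$, or a continuation argument on the maximal interval of uniform nondegeneracy.
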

\begin{proof}
First we fix an instant $t=t_m>0$ and let $\Delta t\to 0$. With an abuse of notations, we write 
\[
\Oh(t)=\Oh^m,\;\; \Gh(t)=\Gh^m,\;\; 
\Ct_h(t)=\Ct_h^m,\;\; \Cf_h(t)=\Cf_h^m,
\;\; 
\bar\Bn_h(t)=\bar\Bn_h^m, \;\;
\Ci_h(t)=\Ci_h^m.
\]
Without causing confusion, we also abbreviate the notations in this proof to 
\[
\Oh=\Oh(t), \;\; \Gh=\Gh(t),\;\;
\Ct_h=\Ct_h(t),\;\;
\Cf_h=\Cf_h(t),\;\;
\bar\Bn_h=\bar\Bn_h(t),\;\;\Ci_h=\Ci_h(t).
\]

The key ingredient of the proof is that the dimensions of the finite element spaces are fixed for all time and equal to the dimensions at the initial time. By
Theorem~\ref{thm:discrete-energy-dissipation}, the sequence $\{E_h^{\Delta t}(t+\Delta t)\}$ is uniformly bounded as a function of $\Delta t$. By the equivalence of norms on a finite-dimensional space (see \cite[inequality~(4.5.5)]{brenner2008}), the sequence $\{\Bu_h^{\Delta t}(t+\Delta t)\}$ is also uniformly bounded in the $\BH^1$-norm as a function of $\Delta t$, and has a sub-sequence, denoted by the same notation, which converges to some $\Bu_h(t)\in \BU(\Ct_h)$, namely,
\begin{align*}
\lim_{\Delta t\to 0}
\|\Bu_h^{\Delta t}(t+\Delta t)-\Bu_h(t)\|_{\BH^1(\Oh)} =0.
\end{align*}
The norm equivalence also shows
\begin{align*}
\lim_{\Delta t\to 0}
\|\Bu_h^{\Delta t}(t+\Delta t)-\Bu_h(t)\|_{\BH^1(\Gh)} =0.
\end{align*}
Passing to the limit in \eqref{eq-int-divuh} as $\Delta t\to 0$ yields
\begin{align}\label{lim-divuh}
\mathcal{B}(\Oh;q_h,\bmu_h(t)) = 0
\quad \forall\, q_h\in M(\Ct_h).
\end{align}

Similarly, the approximate normal vector $\bar\Bn_h(t)\in\BU(\Cf_h)$ is uniformly bounded in the $\BH^1(\Gh)$ norm with respect to $\Delta t$. 
We rearrange \eqref{eq-int-wtau} as follows 
\begin{align*}
&(1+\alpha\Delta t)
\mathscr{A}(\Gamma_h;
\bmw_{\taubf,h}^{\Delta t}(t+\Delta t), \bm\eta_h) \\
=\,& -\alpha\mathscr{A}(\Gh;\mathrm{id}+\Delta t\,\Ci_h ((\bmu_{h}^{\Delta t}(t+\Delta t) \cdot  \bar\bmn_h) \bar\bmn_h), \bm\eta_h)  
-\varepsilon \langle { \mathbb{P}_h} \bmu_h^{\Delta t}(t+\Delta t),  
\bm\eta_h \rangle_{\Gh}.
\end{align*}
Pass to the limit in the equality as $\Delta t\to 0$. There exists a $\Bw_{\taubf,h}(t)\in\BU_{\taubf}(\Cf_h)$ such that
\begin{align} \label{lim-wh-tau} 
\mathscr{A}(\Gamma_h;
\bmw_{\taubf,h}(t), \bm\eta_h ) 
=-\alpha\mathscr{A}(\Gh;\mathrm{id}, \bm\eta_h)  
-\varepsilon \langle { \mathbb{P}_h} \bmu_h(t),\bm\eta_h \rangle_{\Gh}
\quad \forall\, \etabf_h\in\BU_{\taubf}(\Cf_h).
\end{align}
The weak convergence on $\BU_{\taubf}(\Cf_h)$ implies the strong convergence
\begin{align*}
\lim_{\Delta t\to 0} \|\bmw_{\taubf,h}^{\Delta t}(t+\Delta t) -\bmw_{\taubf,h}(t)\|_{\BH^1(\Gh)}
=0.
\end{align*}
Furthermore, by \eqref{eq-int-wh}, there exists a $\Bw_h(t)\in \BU(\Ct_h(t))$ such that 
\begin{align}\label{lim-wh}
&\lim\limits_{\Delta t\to 0}
\|\Bw_h^{\Delta t}(t+\Delta t)-\Bw_h(t)\|_{\BH^1(\Oh)}=0, 
    \notag\\
&(\mathbb{D}( \bmw_h(t)), \mathbb{D}(\bmchi_h))_{\Oh} = 0 \quad
\forall \chibf_h \in \BU_0(\Ct_h).  
\end{align}
By \eqref{eq-int-Hh}, $\|H_h^{\Delta t}(t+\Delta t)\|_{\Ltwo[\Gh]}$ is uniformly bounded as $\Delta t\to 0$. Then $\{H_h^{\Delta t}(t+\Delta t)\}$ has a sub-sequence which converges to some $H_h(t)\in U(\Cf_h)$, which admits
\begin{align}
\label{lim-Hh}
\langle H_h(t),\xi_h\rangle_{\Gamma_h} 
=\mathscr{A}(\Gh;\mathrm{id}, 
\Ci_h(\xi_h\bar{\Bn}_h))\quad
\forall\,\xi_h\in U(\Cf_h).    
\end{align}

It is left to study the limit of \eqref{eq-int-uh} as $\Delta t\to 0$. 
Since $\Bu_h^{\Delta t}$ and $\Bw_h^{\Delta t}$ are continuous in time, we infer that
\begin{align*}
&\Bu_h(t) = \lim_{\Delta t\to 0}\Bu_h^{\Delta t}(t+\Delta t) 
= \lim_{\Delta t\to 0}\Bu_h^{\Delta t}(t),\\
&\Bw_h(t)=\lim_{\Delta t\to 0}\Bw_h^{\Delta t}(t+\Delta t) 
= \lim_{\Delta t\to 0}\Bw_h^{\Delta t}(t).
\end{align*}
Furthermore, the definition of $\hat\Bu_h^m$ implies that
\begin{align*}
\frac{\Bu_h^m -\hat\Bu_h^m}{\Delta t} 
=\,& \frac{\Bu_h^m-\Bu_h^m \circ(\Phibf_h^m)^{-1}}{\Delta t}
+ \frac{1-(J_h^m)^{-1/2}}{\Delta t} 
\Bu_h^m\circ(\Phibf_h^m)^{-1}.
\end{align*}
By the identity $J_h^m=\det(\bbI+\Delta t \nabla\Bw_h^m)$, we have 
\begin{align*}
&\lim_{\Delta t\to 0}
\frac{1-[J^{\Delta t}_h(t)]^{-1/2}}{\Delta t} 
=\frac{1}{2} \nabla\cdot\Bw_h(t),\\ 
&\lim_{\Delta t\to 0}
\frac{\big[\Bu_h^{\Delta t}-\Bu_h^{\Delta t}
\circ(\Phibf_h^{\Delta t})^{-1}\big](t)}{\Delta t}
=(\Bw_h(t)\cdot\nabla)\Bu_h(t).
\end{align*}
We conclude that, for any $\Bv_h\in\BU(\Ct_h)$, 
\begin{align*}
\lim_{\Delta t\to 0}\Big(
\frac{\Bu_h^{\Delta t}(t) -\hat\Bu_h^{\Delta t}(t)}{\Delta t},\Bv_h\Big)_{\Oh}
=\big((\Bw_h(t)\cdot\nabla)\Bu_h(t),\Bv_h\big)_{\Oh} 
+\frac{1}{2}\big(\Bu_h(t), 
\Bv_h\nabla\cdot\Bw_h(t)\big)_{\Oh}.
\end{align*}

Now we define the subspace 
\[
\BU_\perp(\Ct_h) :=\big\{\Bv_h\in \BU(\Ct_h): \Cb(\Oh; q_h,\Bv_h)=0\;\;\forall\,
q_h\in M(\Ct_h)\big\}.
\]
Choosing $\Bv_h\in \BU_\perp(\Ct_h)$ in \eqref{eq-int-uh} and passing to the limit, we obtain
\begin{align*}
\lim_{\Delta t\to 0}\Big(
\frac{\partial\Bu_h^{\Delta t}}{\partial t},\Bv_h\Big)_{\Oh}
&+\big((\Bw_h(t)\cdot\nabla)\Bu_h(t),\Bv_h\big)_{\Oh} 
+\frac{1}{2}\big(\Bu_h(t), 
\Bv_h\nabla\cdot\Bw_h(t)\big)_{\Oh}  \notag \\
&+\Co(\Oh;\bmu_h(t)-\bmw_h(t), \bmu_h(t),\Bv_h) 
+\Ca(\Oh;\bmu_h(t),\bmv_h) \notag \\
&+ \gamma_0\langle H_h(t), 
\Ci_h(\bmv_h\cdot \bar\bmn_h)\rangle_{\Gh} 
= (\bmf(t), \bmv_h)_{\Oh}.
\end{align*}
Therefore, the limit of the first term on the left-hand side exists. There exists a $\Bd_h(t)\in \BU_\perp(\Ct_h)$ such that
\[
\lim_{\Delta t\to 0}
\frac{\partial\Bu_h^{\Delta t}}{\partial t} 
= \Bd_h(t).
\] 
Furthermore, by the inf-sup condition \eqref{eq:rigid-free-inf-sup}, there exists a $p_h(t)\in M(\Ct_h)$ such that
\begin{align}\label{lim-uh}
\big(\Bd_h(t),\Bv_h\big)_{\Oh}
&+\big((\Bw_h(t)\cdot\nabla)\Bu_h(t)
+\tfrac{1}{2}(\nabla\cdot\Bw_h(t))\Bu_h(t), \Bv_h\big)_{\Oh} 
+\Ca(\Oh;\bmu_h(t),\bmv_h)
    \notag \\
&+\Co(\Oh;\bmu_h(t)-\bmw_h(t),
\bmu_h(t),\Bv_h)  
-\mathcal{B}(\Oh;p_h(t),\bmv_h) 
    \notag \\
&+ \gamma_0\langle H_h(t), \Ci_h(\bmv_h\cdot \bar\bmn_h)\rangle_{\Gh} 
= (\bmf(t), \bmv_h)_{\Oh} \qquad
\forall\,\Bv_h\in\BU(\Ct_h).
\end{align}

By \eqref{eq-int} and the arbitrariness of $t$, the limit $\Bu_h(t)$ is continuous in $t$. It is easy to verify that $\Bd_h=\partial_t\Bu_h$ holds in the weak sense. Using \eqref{eq:Dth}, we have
\[
(\Bd_h(t),\Bv_h)_{\Oh}
+((\Bw_h(t)\cdot\nabla)\Bu_h(t), \Bv_h)_{\Oh} 
= (\partial^\bullet_{t,h}\Bu_h(t), \Bv_h)_{\Oh}.
\]
Substituting this equality into \eqref{lim-uh} and collecting \eqref{lim-divuh}, \eqref{lim-wh-tau}, \eqref{lim-wh}, and \eqref{lim-Hh}, we find that the limit functions satisfy \eqref{eq:weakh}.
The proof is finished.
\end{proof}

\section{Numerical tests} 
\label{sec:numerical}

In this section, we present numerical experiments to demonstrate the performance of the CTV scheme.
Unless otherwise stated, we take
$\bmf=\bm 0$, $\Bu_h=\bm 0$, and $\varepsilon=10^{-4}$. The dynamics of the system is driven by the surface tension of fluid. 
We employ the Taylor--Hood $P_2$--$P_1$ pair for the fluid velocity and pressure. 
To quantify the domain shape and bulk mesh quality in the 2D case, we use the circularity \cite{DuanLiALEJCP22} and the mesh-quality indicators
\[
C_h^m := 2\pi^{1/2} |\Oh^m|^{1/2}|\Gh^m|^{-1},
    \qquad
\sigma_{\max}^m :=
    \max_{K\in\Ct_h^m}\frac{h(K)}{r(K)}.
\]
Here, $0<\mathcal C_h^m\le 1$, with $\mathcal C_h^m=1$ corresponding to a circle,
while $h(K)$ and $r(K)$ denote the diameters of the circumcircle and the maximal inscribed disk of $K$, respectively. Smaller values of $\sigma_{\max}^m$ indicate better mesh quality. All of the numerical experiments are implemented using NGSolve; see \url{https://ngsolve.org}.

\subsection{Accuracy test}
\label{num:accuracy}
We take the initial computational domain $\Omega_h^0$ to be an ellipse with a semi-major axis $a=2$ and a semi-minor axis $b=1$. 
% In the early stage of the evolution, due to the non-uniform curvature distribution along the elliptical boundary, surface tension induces an unbalanced Laplace pressure difference within the fluid. This pressure difference drives the fluid to flow from the major axis towards the minor axis. 
% As time progresses, the droplet gradually retracts, accompanied by the dissipation of kinetic energy, and eventually evolves into a steady state of a perfect circle with constant curvature, as illustrated in Fig. \ref{fig:ex1-mesh}. And we show the time evolution of the area circularity and the circularity in Fig. Additionally, Fig.  shows the temporal evolution of the total energy, which indicates the proposed method strictly adheres to the energy-dissipation principle.
The time-step size is taken as $\Delta t = 10^{-3}$, 
and the simulations are performed on a sequence of successively refined meshes up to $T=1$, 
with viscosity coefficient $\mu=0.01$ and surface tension coefficient $\gamma_0=1$.

Figure~\ref{fig:converge_mesh} shows the mesh in the initial state 
with size $h = 0.15$ and the mesh in the final state.
The numerical solutions on different mesh levels are defined on different evolving domains. 
To compute the approximation errors, we first pull the numerical solutions back to the initial domain $\Oh^0$ by ALE maps 
\[
\tilde{\bmu}_h^M :=\bmu_h^M\circ\Psibf_h^{M-1},\qquad
\tilde p_h^M:= p_h^M\circ\Psibf_h^{M-1},
\]
where $\Psibf_h^m := \Phibf_h^m \circ \Phibf_h^{m-1}\circ\cdots\circ\Phibf_h^1$. 
The pulled-back solutions on $\Ct_h^0$ are prolonged to $\Ct_{h/2}^0$ by using finite element interpolations
\[
\begin{aligned}
\| \bm{e}_\bmu\|_{\bm{H}^1}
:=
\big\|\Ci_{h/2}\tilde{\bmu}_h^M
-\tilde{\bmu}_{h/2}^M
\big\|_{\Honev[\Omega_{h/2}^0]},\quad
\|e_p\|_{L^2}
:=\big\|
\pi_{h/2}\tilde{p}_h^M - \tilde{p}_{h/2}^M
\big\|_{L^2(\Omega_{h/2}^0)},
\end{aligned}
\]
where $\Ci_{h/2}$ and $\pi_{h/2}$ are the nodal interpolation operators onto $\BU(\Ct_{h/2}^0)$ and $M(\Ct_{h/2}^0)$, respectively.
The resulting self-convergence errors are shown in Table~\ref{tab:converge}. 
Both errors exhibit asymptotically second-order convergence, consistent with the expected optimal spatial accuracy.

\begin{figure}[htbp]
    \centering
    \includegraphics[width=0.4\linewidth]{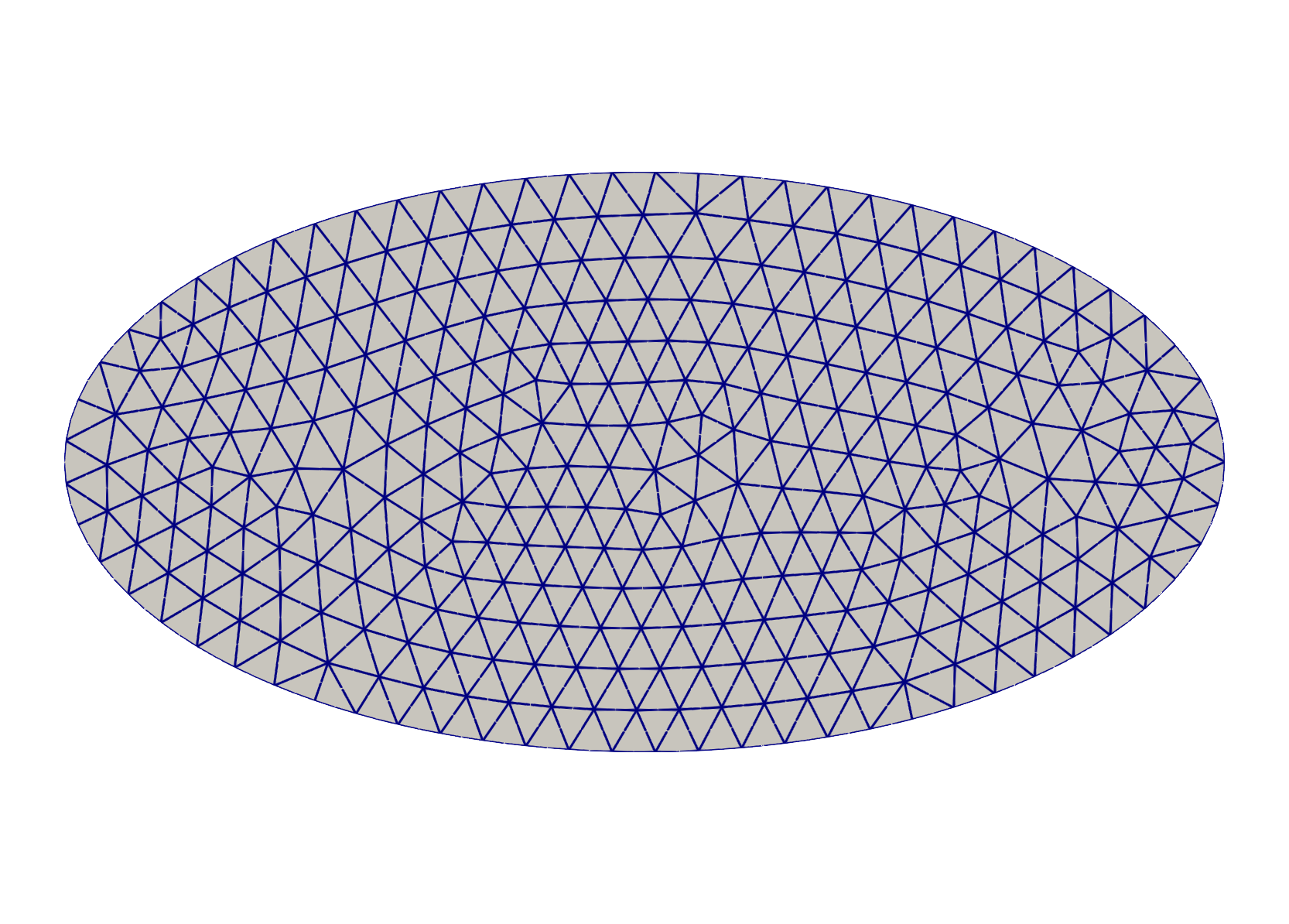}
    \hfill
    \includegraphics[width=0.4\linewidth]{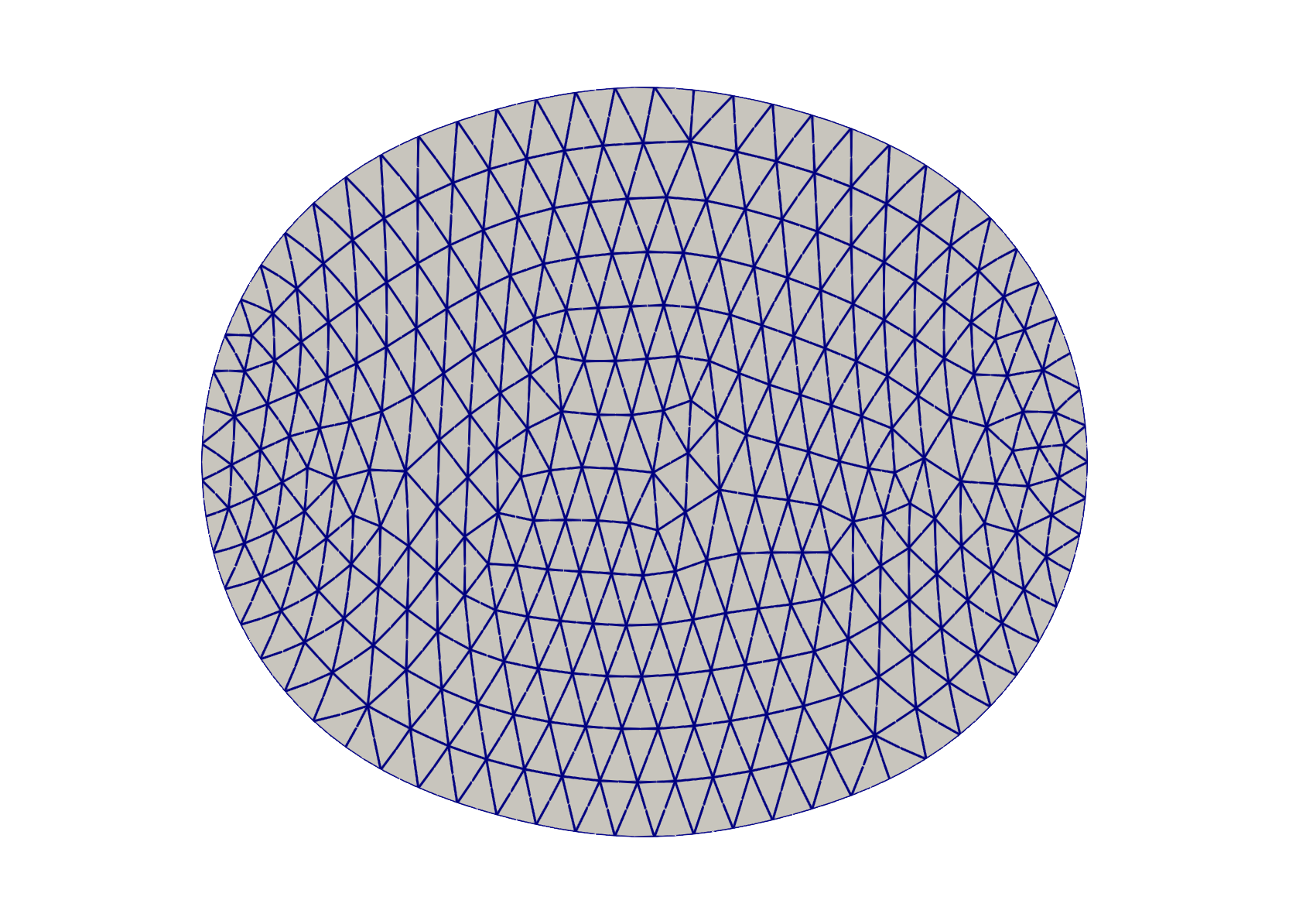}
    \vspace{-10pt}
    \caption{Computational mesh at $T=0$ (left) and $T=1$ (right) with mesh size $h=0.15$.}
    \label{fig:converge_mesh}
\end{figure}

\begin{table}[htbp]
\centering
\vspace{-10pt}
\caption{Spatial convergence rates for different variables.}
\label{tab:converge}
\begin{tabular}{ccccc}
\toprule
$h$ & $\| \bm{e}_\bmu\|_{\bm{H}^1}$ & Order & $\|e_p\|_{L^2}$ & Order \\
\midrule
0.15000 & 4.0248e-01 & -& 1.5230e-02 & - \\
0.07500 & 1.4654e-01 & 1.46 & 3.3822e-03 & 2.17  \\
0.03750 & 4.0760e-02 & 1.85 & 7.6361e-04 & 2.15  \\
0.01875 & 1.0569e-02 & 1.95 & 1.8837e-04 & 2.02  \\
\bottomrule
\end{tabular}
\end{table}

\subsection{Robustness with respect to $\varepsilon$}
\label{num:epsilon}

In this example, we investigate the robustness of the CTV method with respect to the artificial parameter $\varepsilon\ge 0$. 
The computational domain and coefficients $\mu, \gamma_0$ are identical to those in Subsection~\ref{num:accuracy}. 
We fix the time step at $\Delta t=10^{-2}$, and compare the temporal evolution of the total energy and mesh quality indicator $\sigma_{\max}^m$ for $\varepsilon=1$, $10^{-4}$, $10^{-8}$ and $0$, respectively.
As shown in Figure~\ref{fig:ex2-eps-consistency}, the total energies decrease monotonically in time for these values of $\varepsilon$, and the mesh quality indicator $\sigma_{\max}^m$ also remains stable when $\varepsilon$ approaches zero.

\begin{figure}[htbp]
    \centering
    \includegraphics[width=0.45\linewidth]{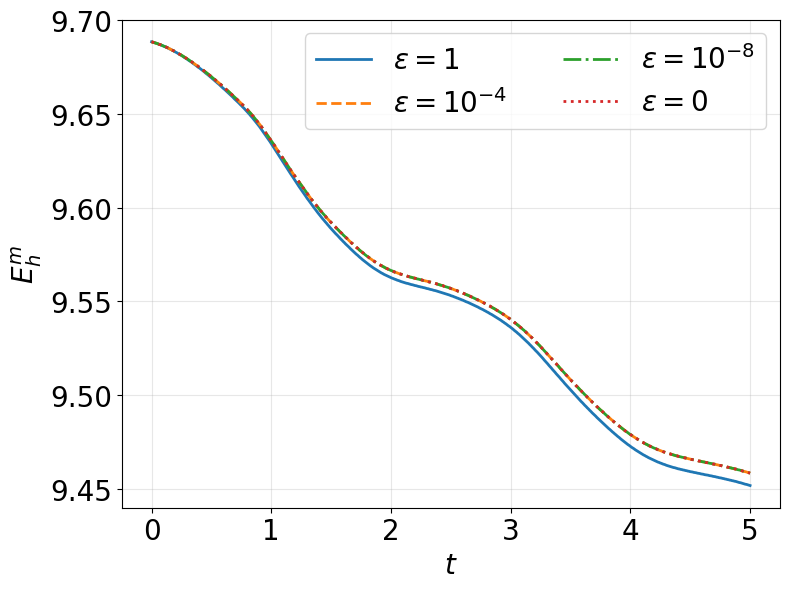}
    \includegraphics[width=0.45\linewidth]{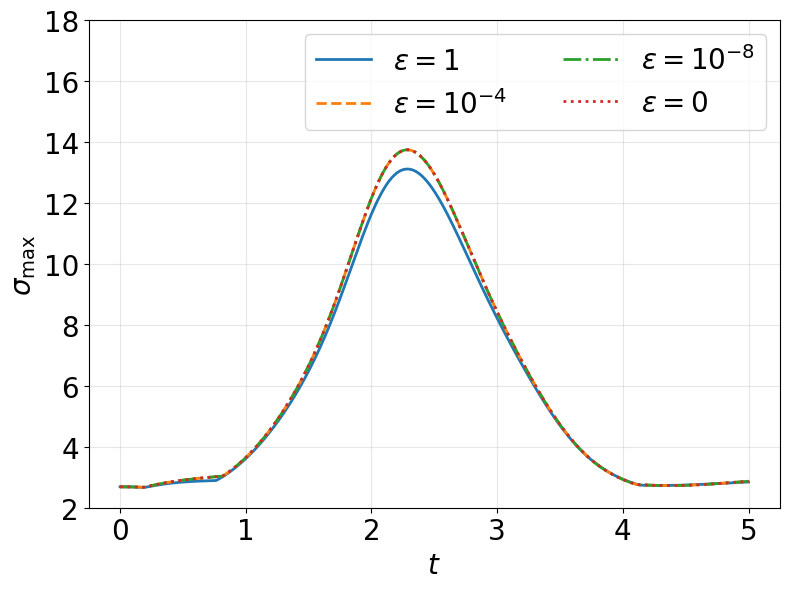}
    \vspace{-10pt}
    \caption{The evolution of the total energy and $\sigma_{\max}$ of the bulk, with $h=0.075$.}
    \label{fig:ex2-eps-consistency}
\end{figure}

\subsection{Six-leaf clover}
\label{num:robust_dt}

In this example, we compare the CTV method against the BGN method in terms of mesh quality. 
The initial domain is a six-leaf clover described in polar coordinates by
\[
    r(\theta)
    =  R\bigl(1+\epsilon\cos(k\theta)\bigr),
    \quad
    \theta\in[0,2\pi),
\]
with $R=1.2$, $\epsilon=0.2$, $k=6$.
The nonuniform curvature of the initial free surface generates a capillary force that drives the domain to oscillate and gradually relax toward a circular equilibrium.

Specifically, we compute the mesh velocity $\bmw_h^{m+1}$ on $\Gh^m$ via the BGN method and then extend it into the bulk via the MDR method. 
The tangential mesh redistribution becomes ineffective as the time-step size decreases, leading to a severe mesh distortion and eventually mesh tangling. In this example, we take a small time step $\Delta t = 10^{-4}$ and set the surface tension coefficient to $\gamma_0 = 1$. 

Figure~\ref{fig:example3-mu-1} shows the ALE meshes obtained with viscosity coefficient $\mu = 10^{-3}$ at $t=0.131,0.165,0.3$. The top panel shows the mesh generated by the BGN method, while the bottom panel shows that generated by the CTV method. The BGN mesh becomes tangled, causing the computation to terminate after $t=0.3$. In contrast, the CTV method yields a shape-regular mesh and supports long-time evolution.

    \begin{figure}[htbp]
	\centering
	\begin{subfigure}[b]{0.25\textwidth}
		\centering
		\includegraphics[width=\textwidth]{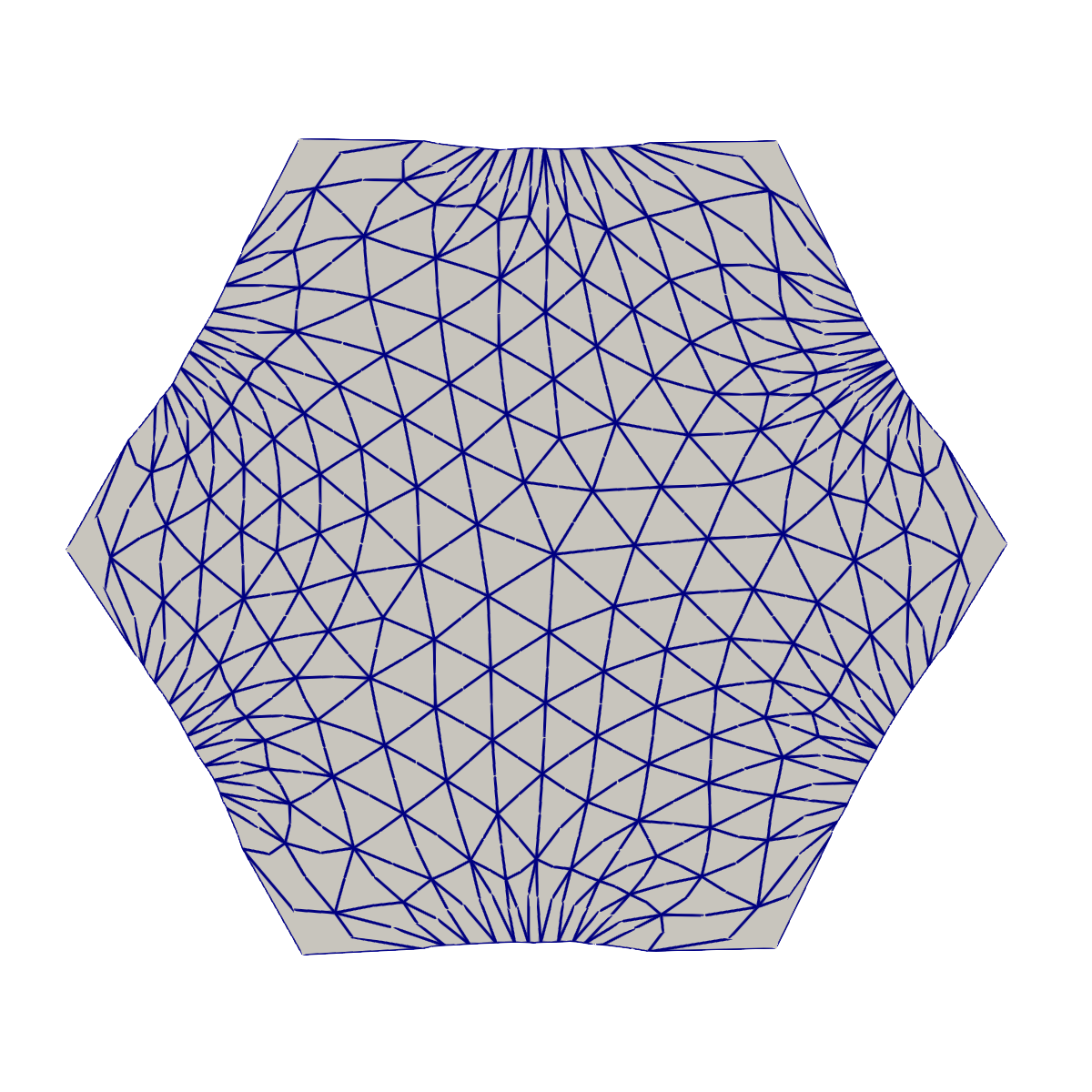}
		\caption{BGN: $t=0.131$}
	\end{subfigure}
	\begin{subfigure}[b]{0.25\textwidth}
		\centering
		\includegraphics[width=\textwidth]{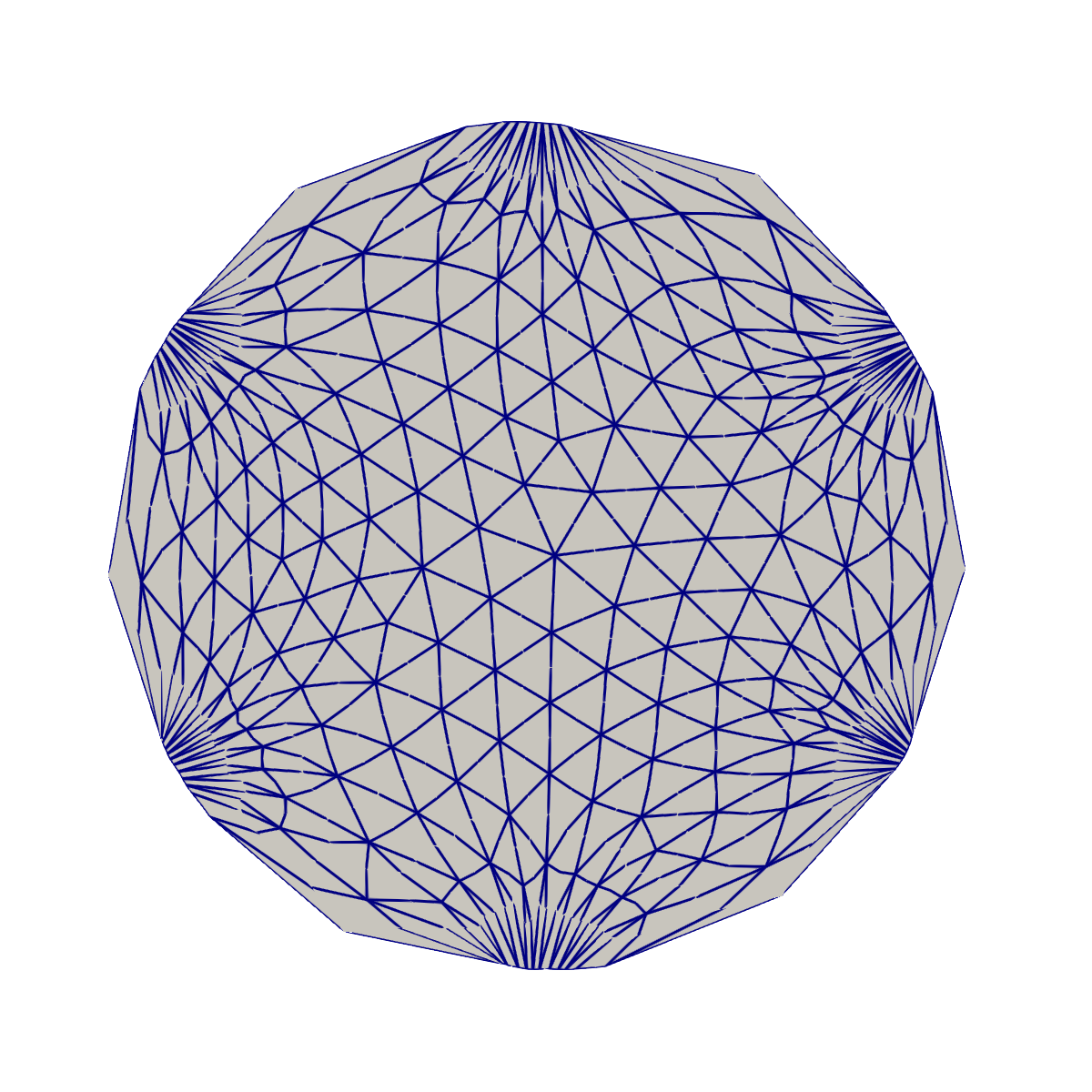}
		\caption{BGN: \(t = 0.165\)}
	\end{subfigure}
	\begin{subfigure}[b]{0.25\textwidth}
		\centering
		\includegraphics[width=\textwidth]{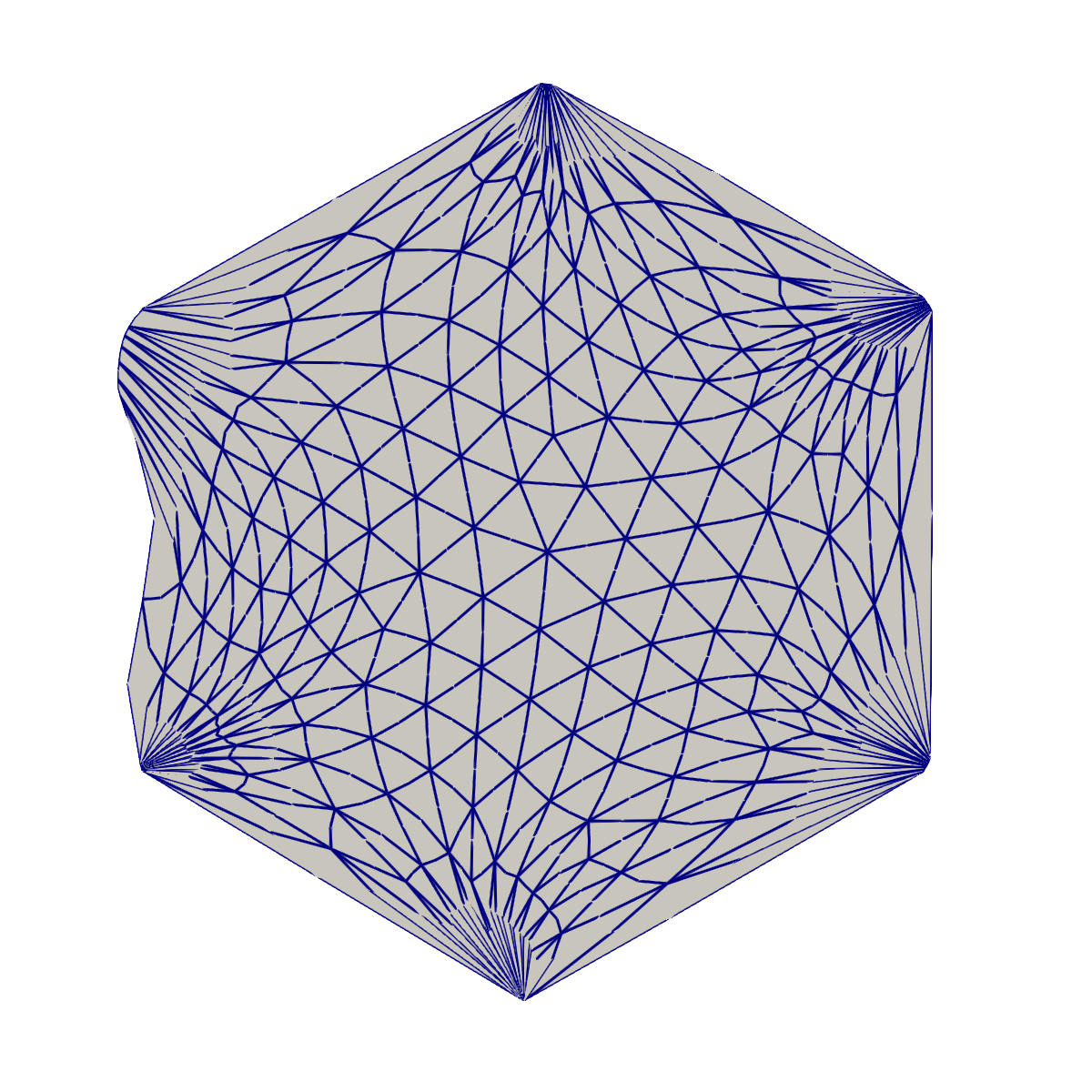}
		\caption{BGN: \(t = 0.3\)}
	\end{subfigure}
	\begin{subfigure}[b]{0.25\textwidth}
		\centering
		\includegraphics[width=\textwidth]{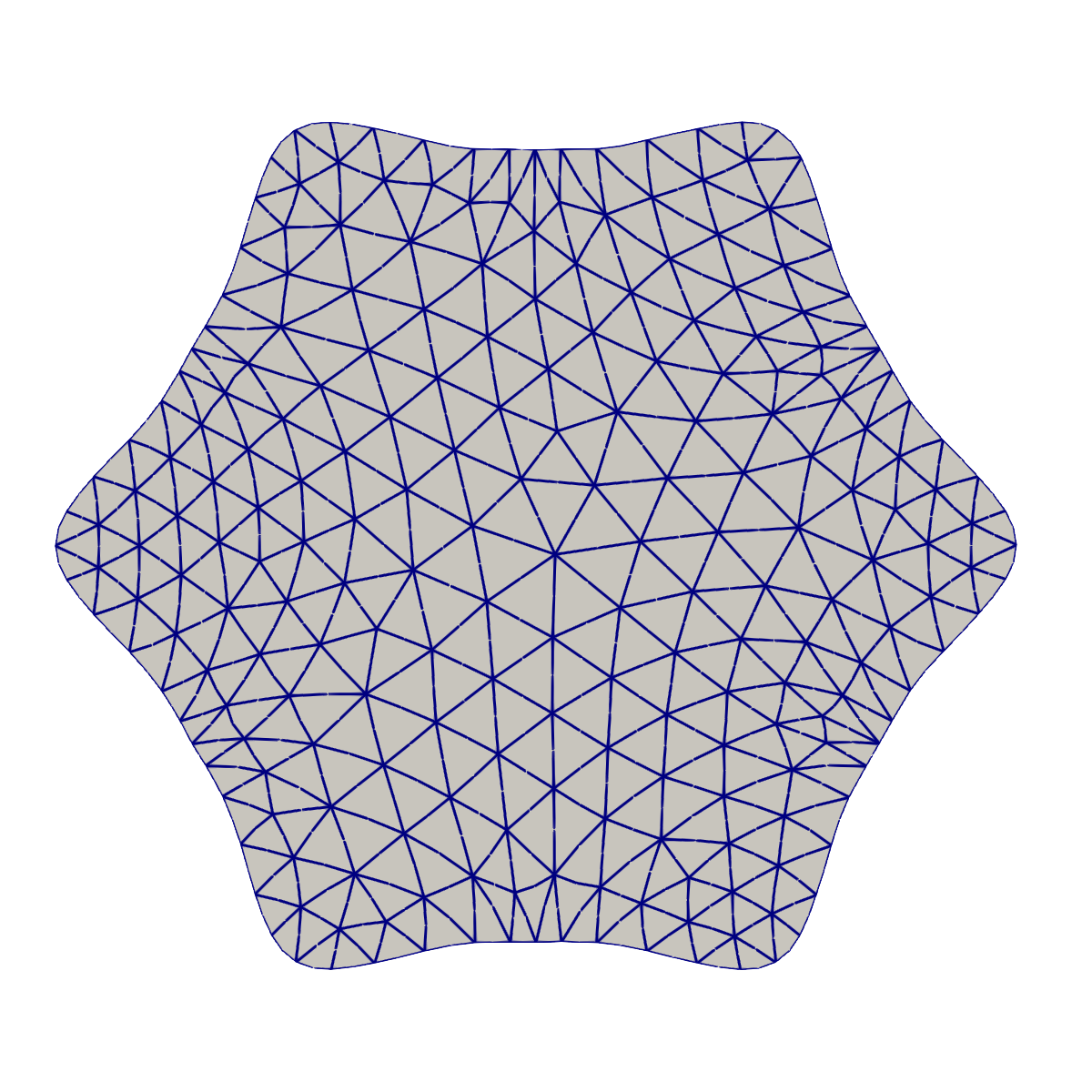}
		\caption{CTV: \(t = 0.131\)}
	\end{subfigure}
	\begin{subfigure}[b]{0.25\textwidth}
		\centering
		\includegraphics[width=\textwidth]{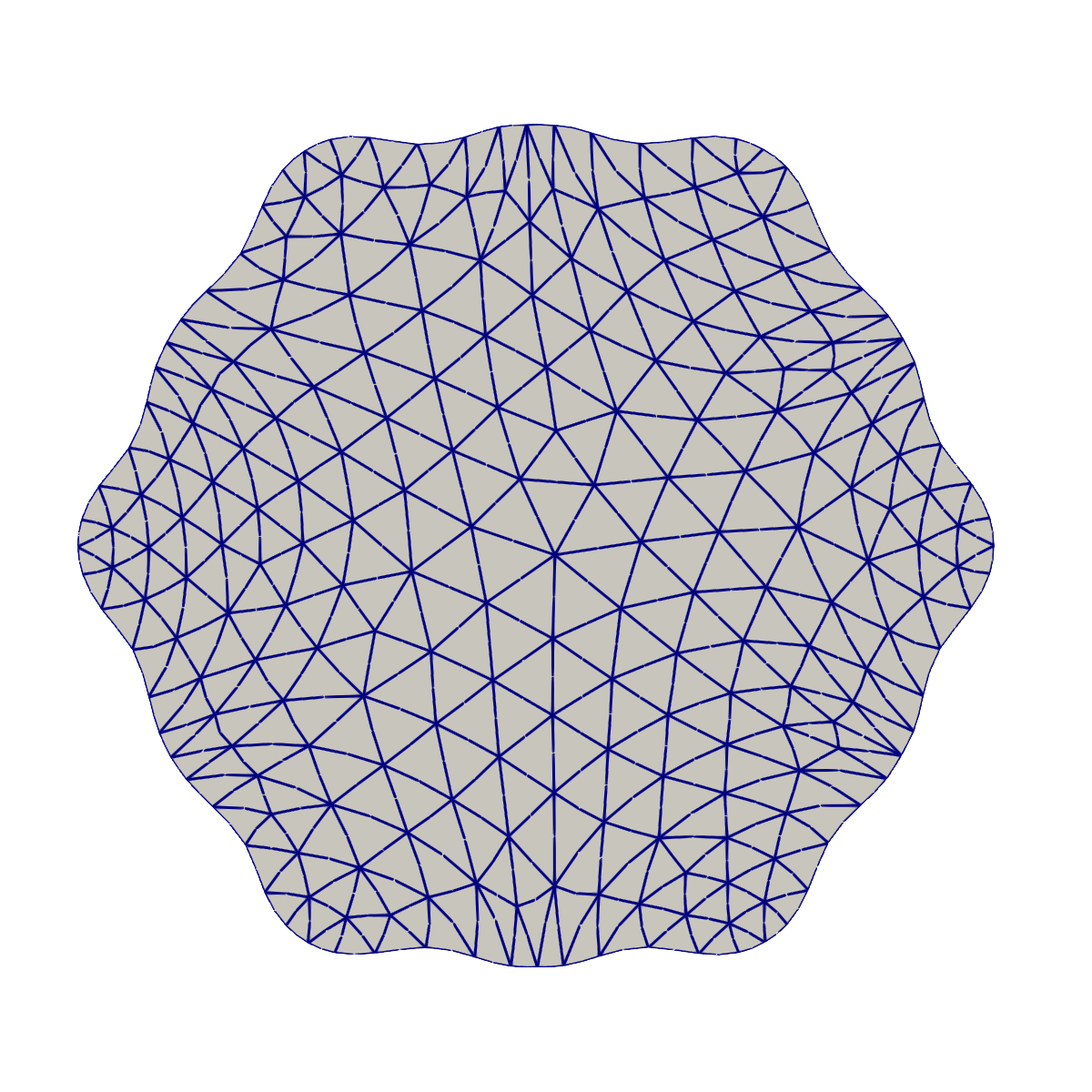}
		\caption{CTV: \(t = 0.165\)}
	\end{subfigure}
	\begin{subfigure}[b]{0.25\textwidth}
		\centering
		\includegraphics[width=\textwidth]{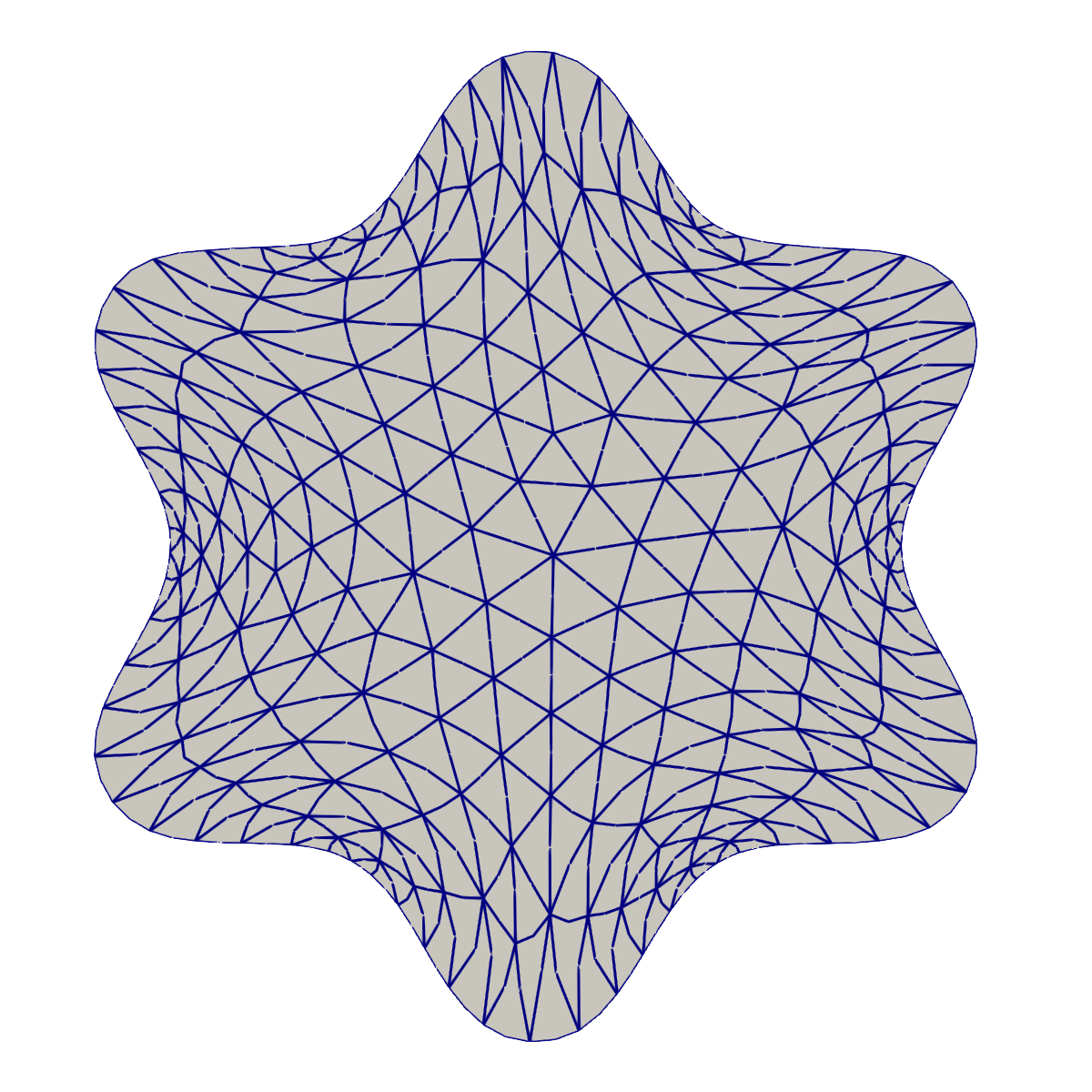}
		\caption{CTV: \(t = 0.3\)}
	\end{subfigure}
	\caption{ALE meshes at different time instants with $\Delta t = 10^{-4}$ and $\mu=10^{-3}$.}
	\label{fig:example3-mu-1}
\end{figure}

Next we increase the viscosity coefficient to $\mu=10^{-2}$.
Figure~\ref{fig:example3-mu-2} shows the ALE meshes obtained at times $t=0.158, 0.2, 0.3$. Similarly, the top panel shows the mesh generated by the BGN method, and the bottom panel shows that generated by the CTV method. In the BGN mesh, we find two concave quadrilateral elements: one at the upper-right concave part of the boundary and the other at the lower-right concave part. These two elements are illegal and cause the computation to terminate again after $t=0.3$. However, the CTV meshes are of good quality.
Moreover, from Figure~\ref{fig:example3-mu-3}, we observe that the CTV method remains stable even for $t=1$, with the mesh-quality indicator $\sigma_{\max}^m$ remaining around $10$ throughout the simulation, as shown in Figure~\ref{fig:example3_sigma}.

\begin{figure}[htbp]
	\centering
	\begin{subfigure}[b]{0.25\textwidth}
		\centering
		\includegraphics[width=\textwidth]{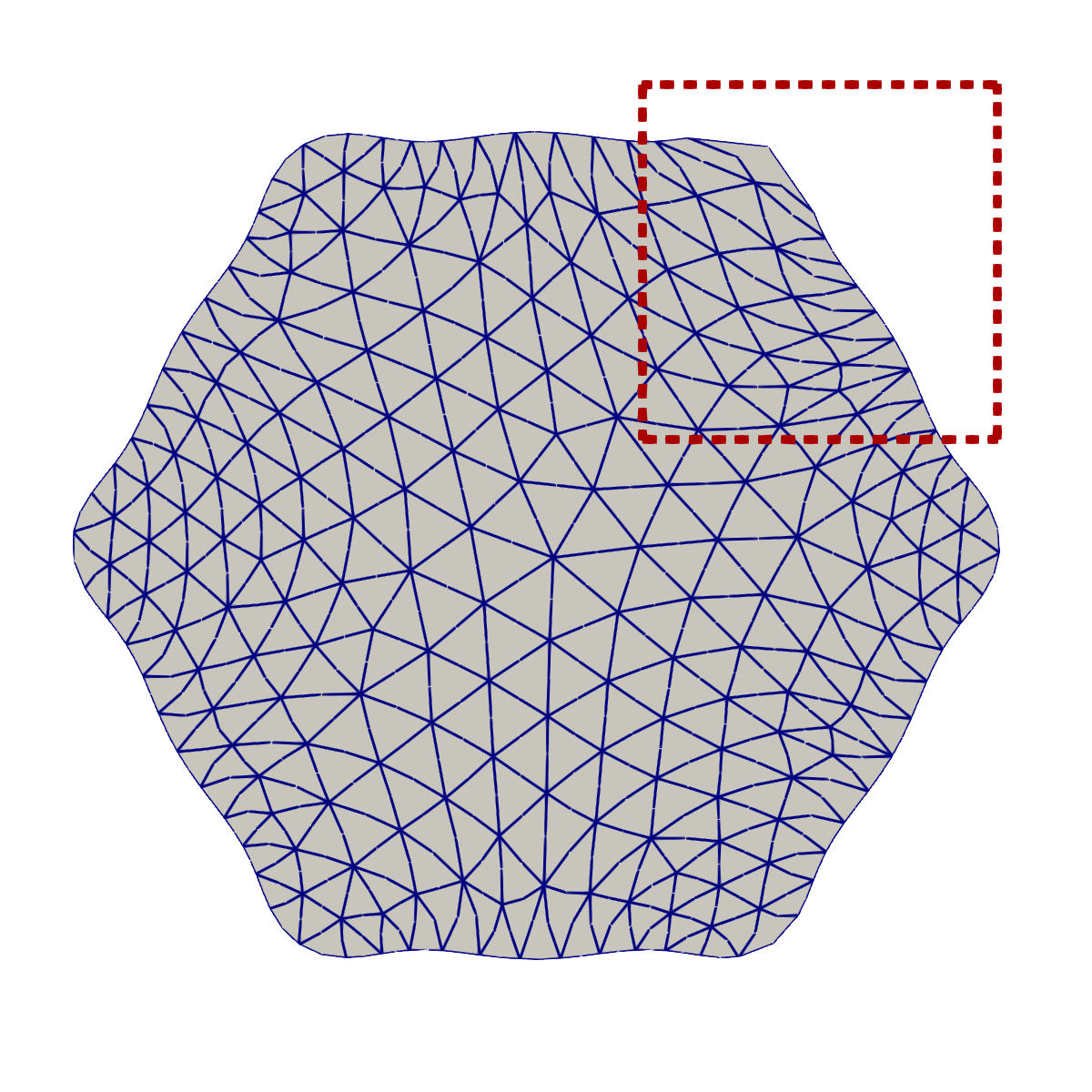}
		\caption{BGN: \(t=0.158\)}
	\end{subfigure}
	\begin{subfigure}[b]{0.25\textwidth}
		\centering
		\includegraphics[width=\textwidth]{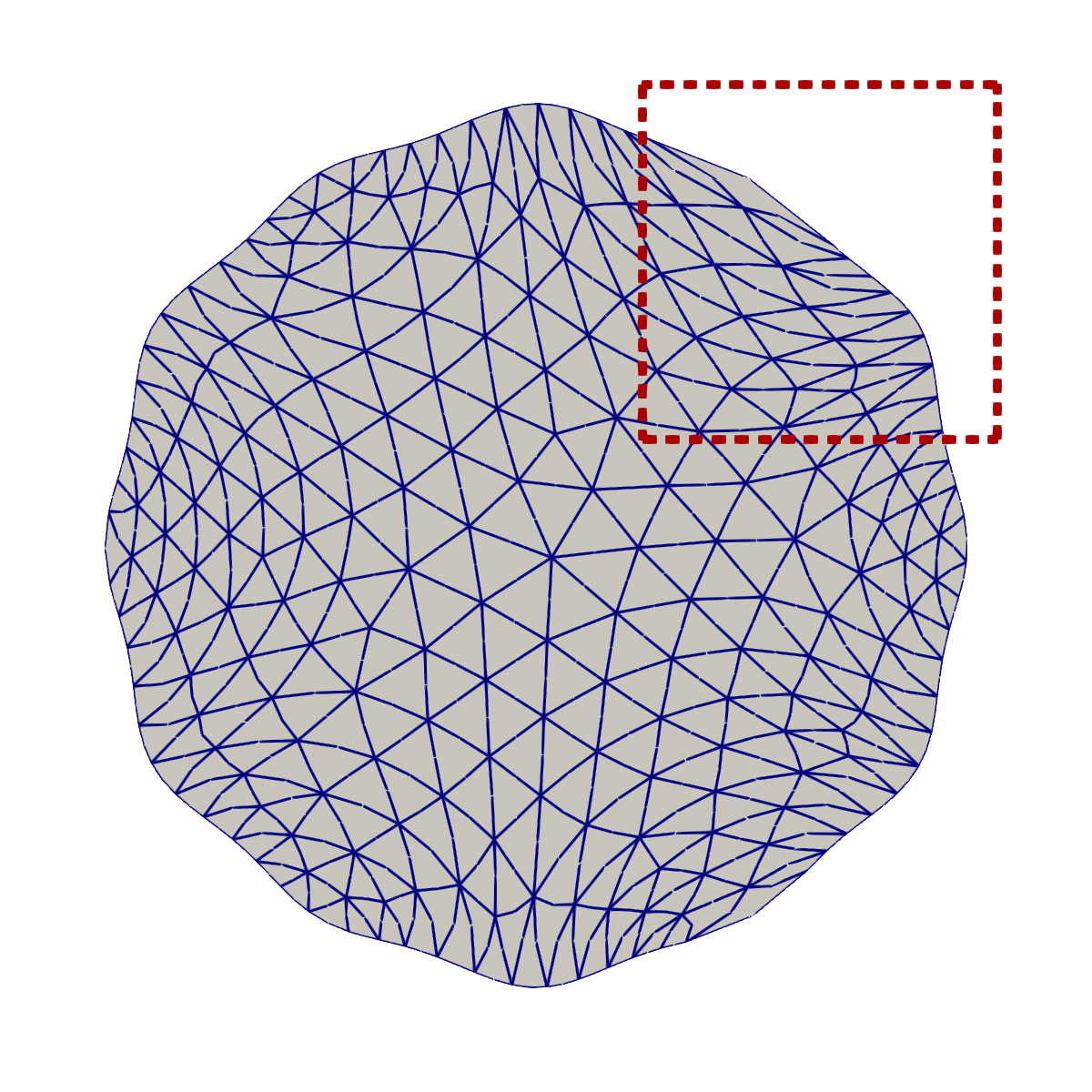}
		\caption{BGN: \(t = 0.2\)}
	\end{subfigure}
	\begin{subfigure}[b]{0.25\textwidth}
		\centering
		\includegraphics[width=\textwidth]{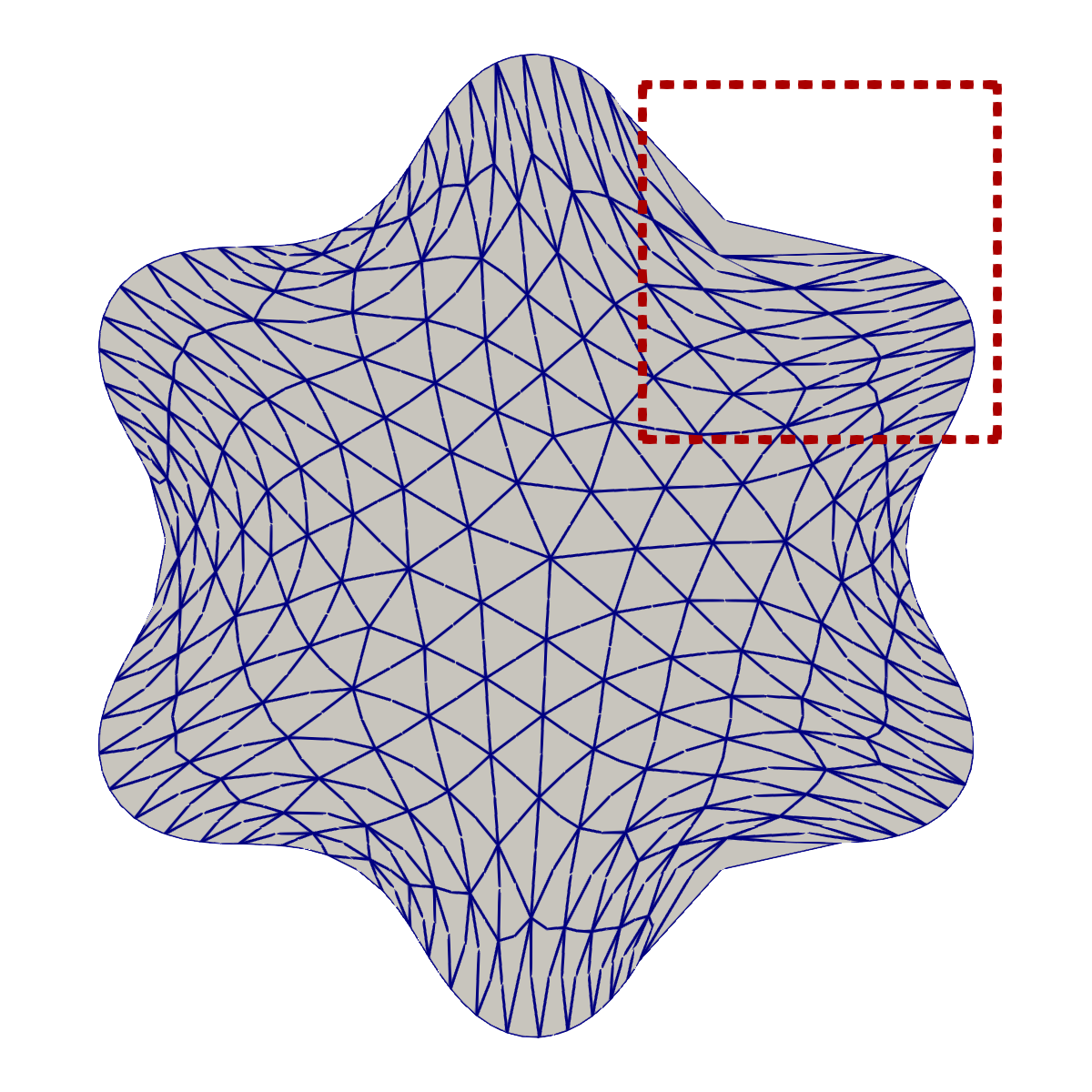}
		\caption{BGN: \(t = 0.3\)}
	\end{subfigure}
	\begin{subfigure}[b]{0.25\textwidth}
		\centering
		\includegraphics[width=\textwidth]{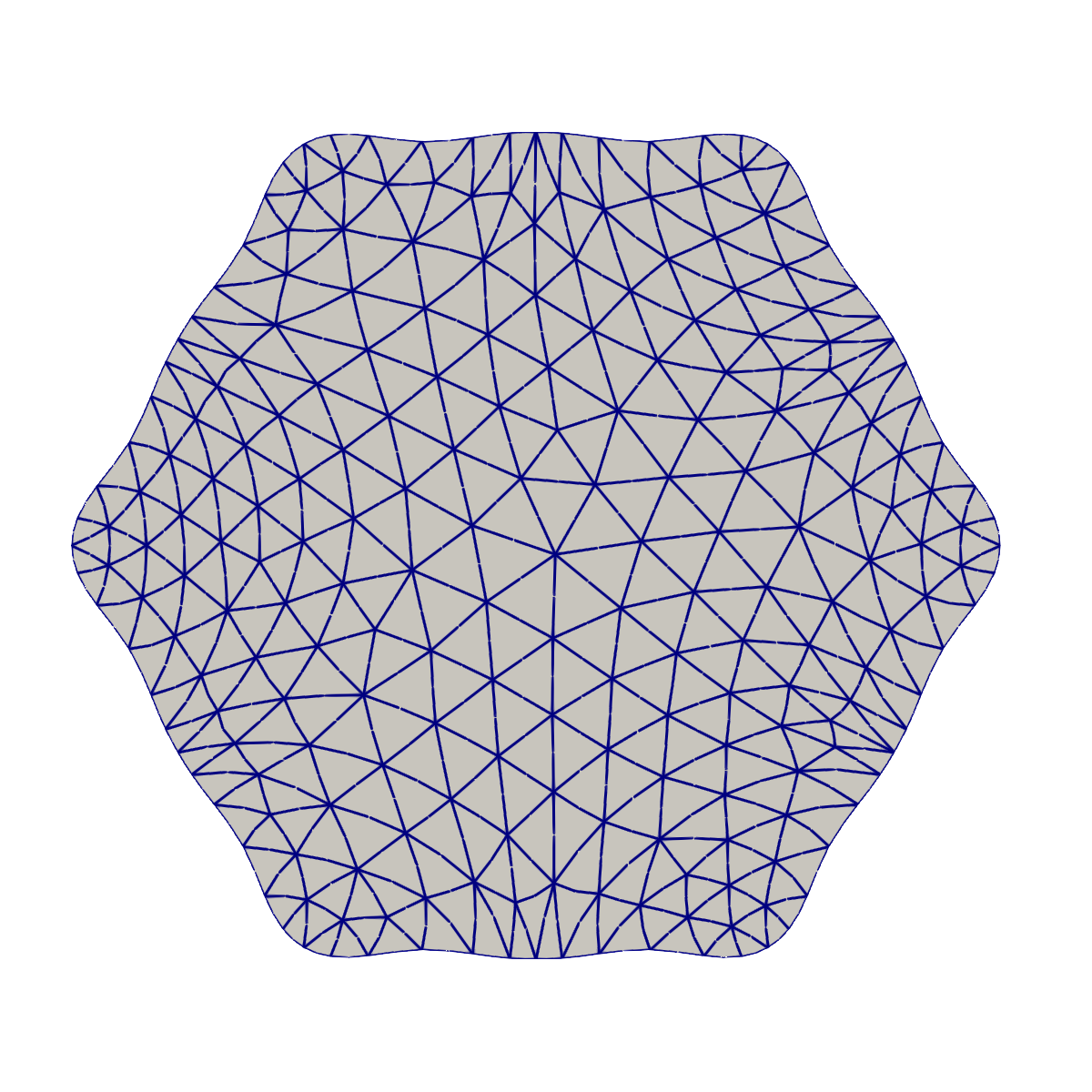}
		\caption{CTV: \(t = 0.158\)}
	\end{subfigure}
	\begin{subfigure}[b]{0.25\textwidth}
		\centering
		\includegraphics[width=\textwidth]{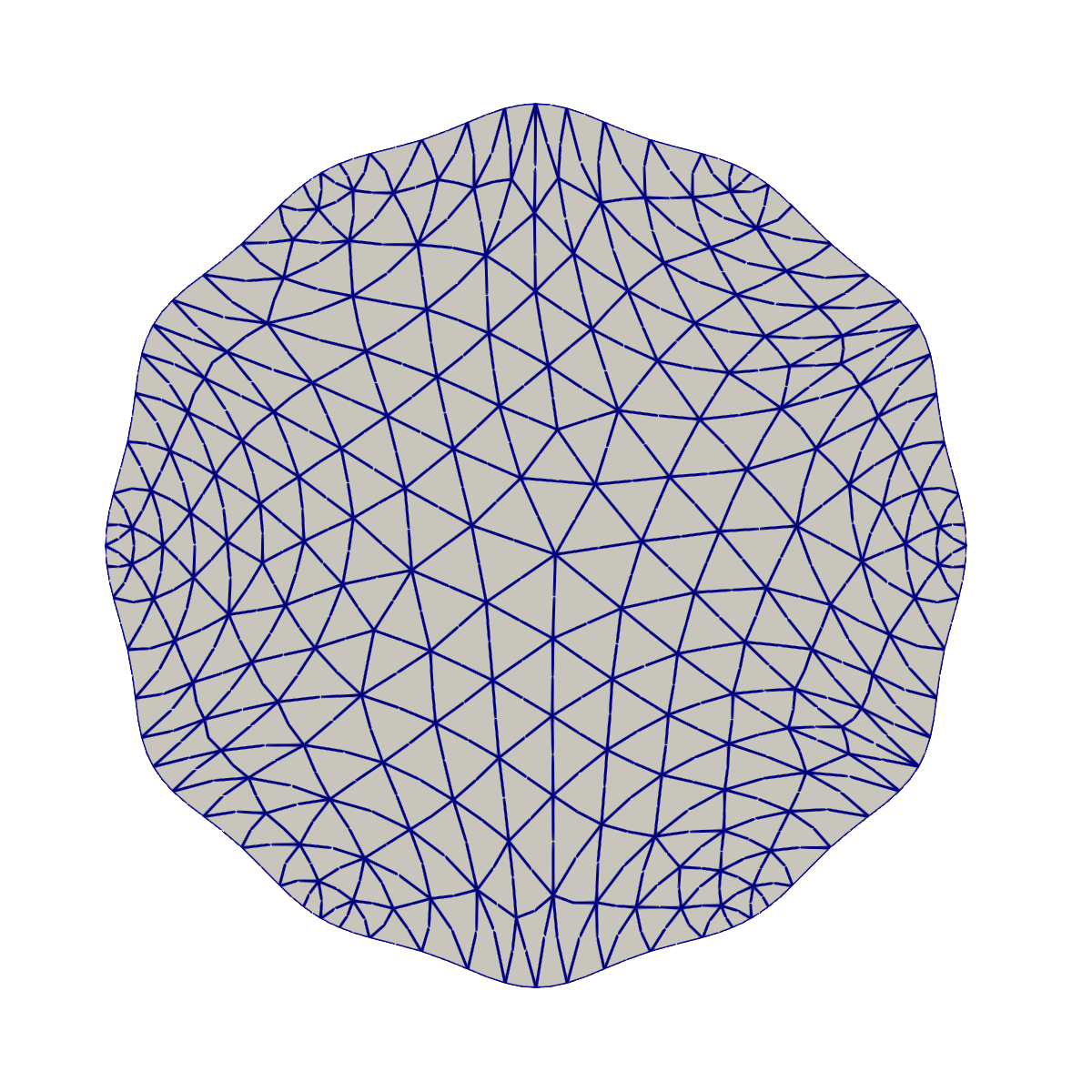}
		\caption{CTV: \(t = 0.2\)}
	\end{subfigure}
	\begin{subfigure}[b]{0.25\textwidth}
		\centering
		\includegraphics[width=\textwidth]{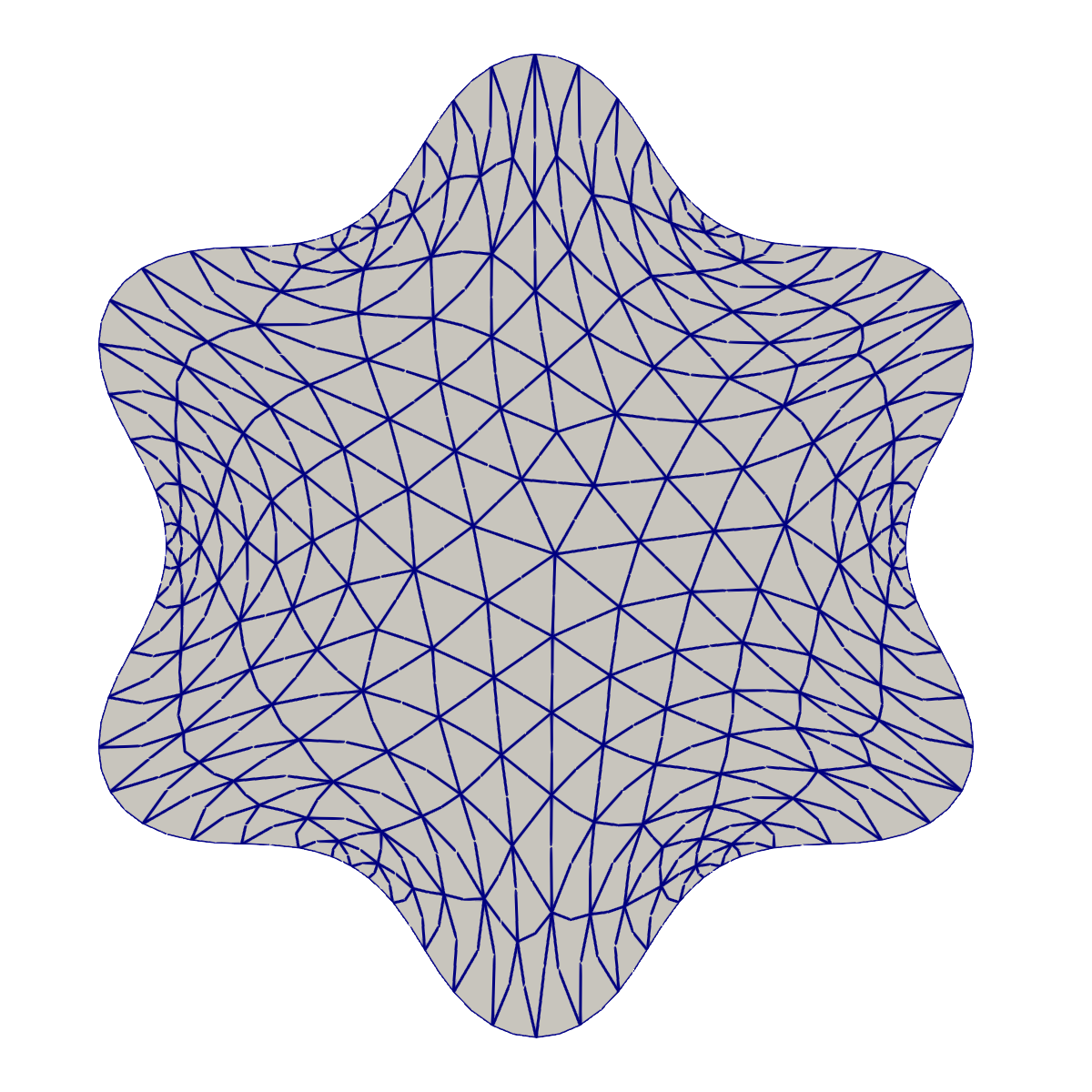}
		\caption{CTV: \(t = 0.3\)}
	\end{subfigure}
	\caption{ALE meshes at different time instants with $\Delta t = 10^{-4}$ and $\mu=10^{-2}$.}
	\label{fig:example3-mu-2}
\end{figure}

\begin{figure}[htbp]
	\centering
	\begin{subfigure}[b]{0.25\textwidth}
		\centering
		\includegraphics[width=\textwidth]{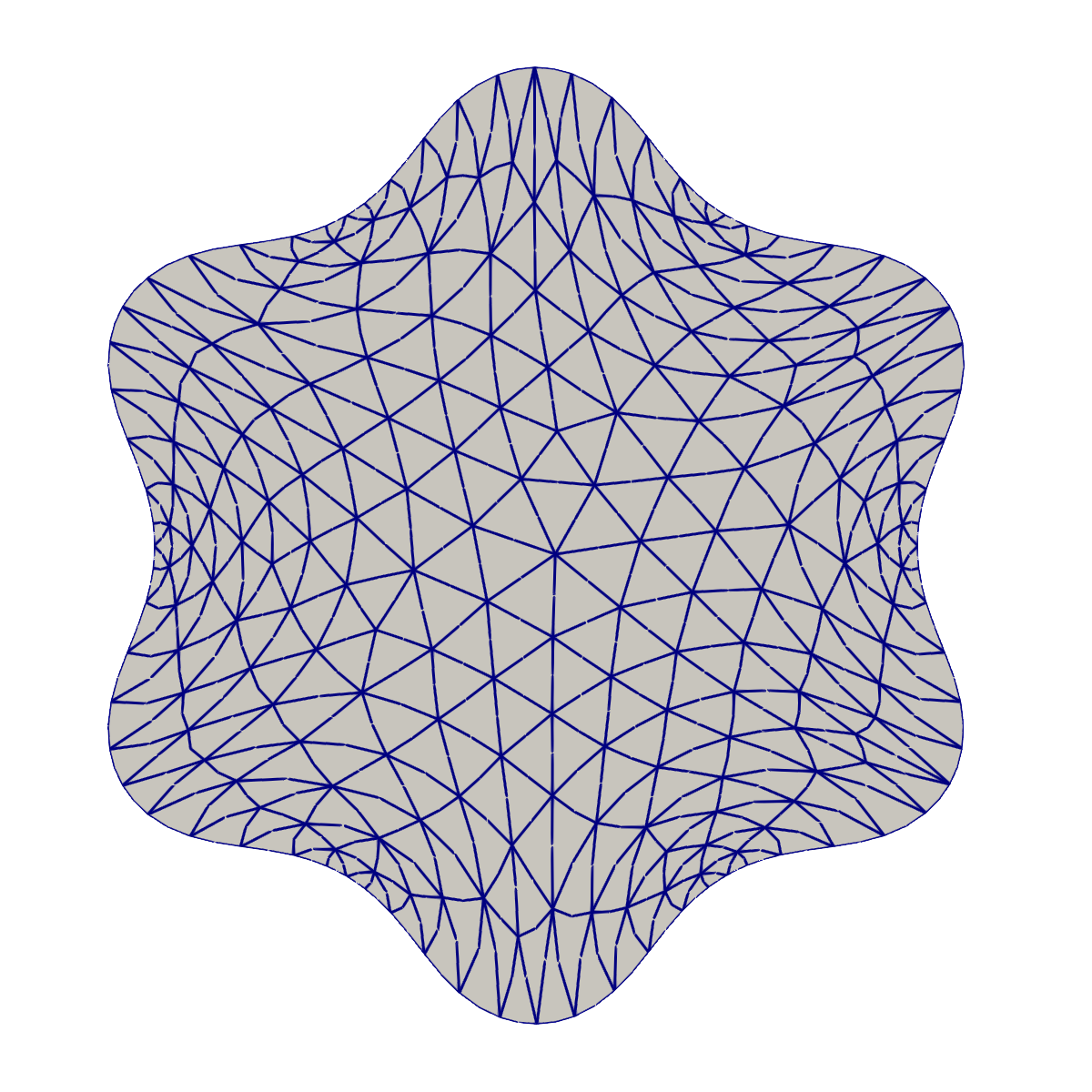}
		\caption{CTV: \(\mu = 10^{-2}\)}
	\end{subfigure}
	\begin{subfigure}[b]{0.25\textwidth}
		\centering
		\includegraphics[width=\textwidth]{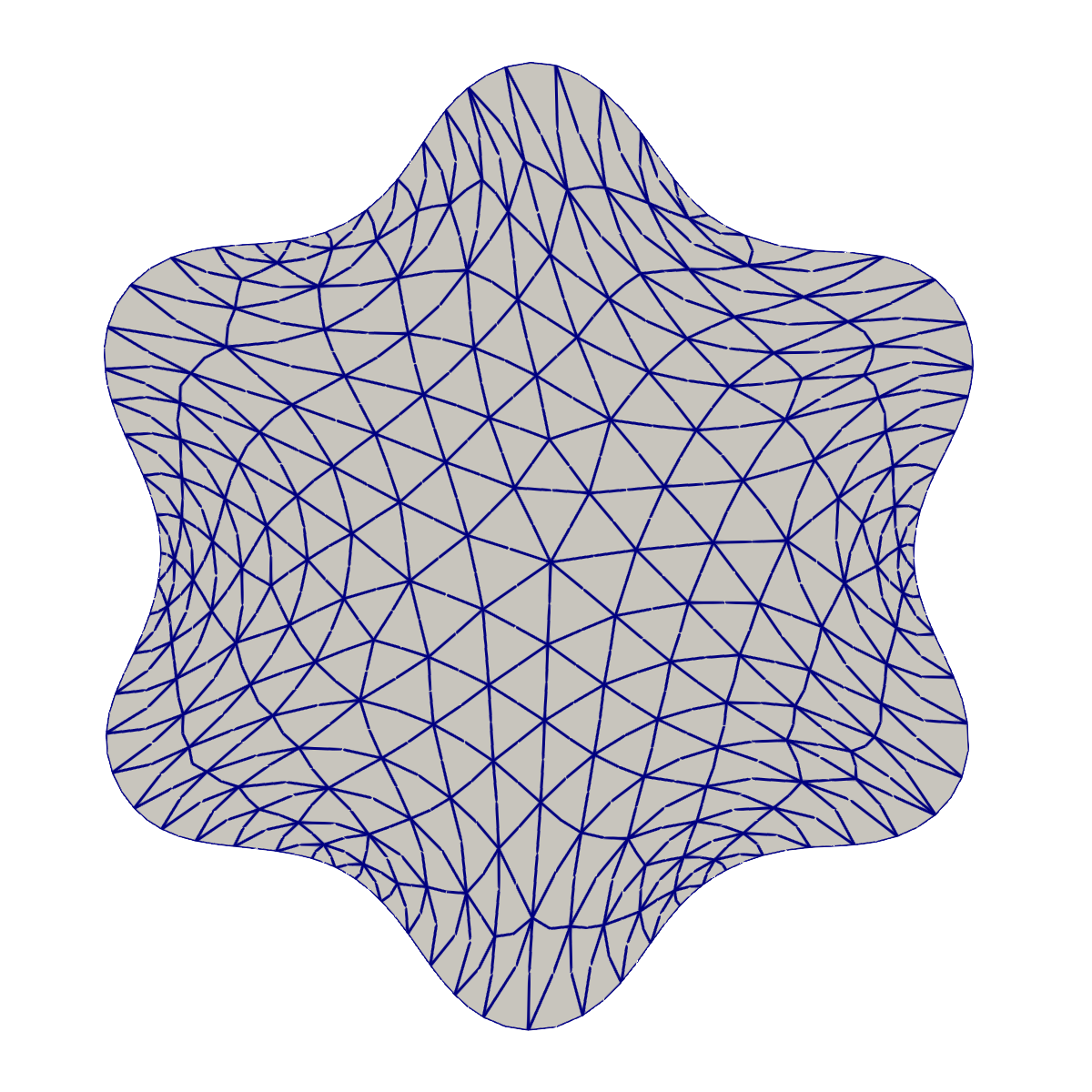}
		\caption{CTV: \(\mu = 10^{-3}\)}
	\end{subfigure}
	\caption{ALE meshes at $t=1$ with $\Delta t = 10^{-4}$.}
	\label{fig:example3-mu-3}
\end{figure}

\begin{figure}[htbp]
	\centering
	\includegraphics[width=0.5\textwidth]{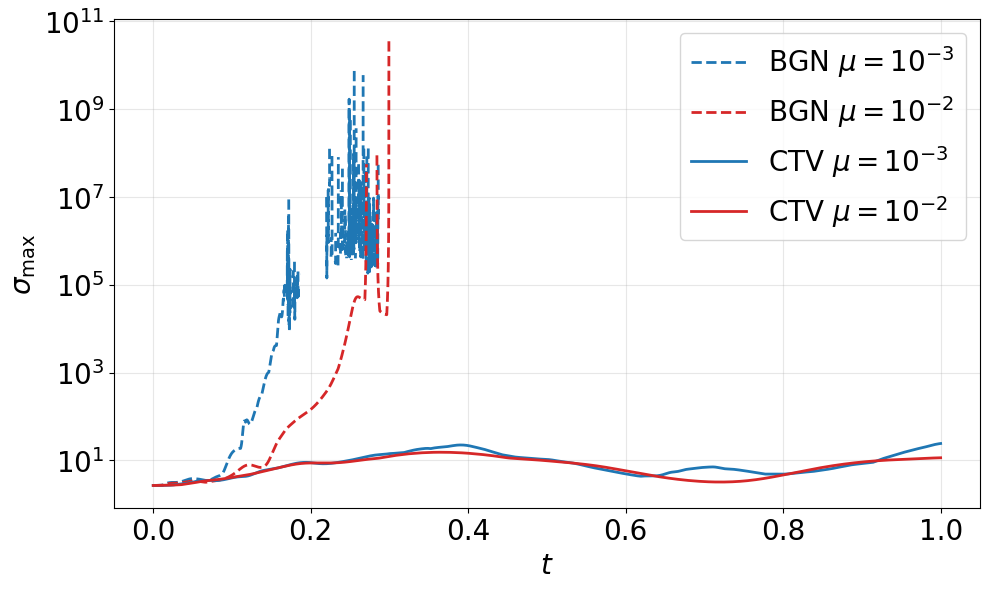}
\caption{Mesh quality indicator $\sigma_{\max}^m$ for the BGN and CTV methods with $\Delta t = 10^{-4}$. 
}
\vspace{-15pt}
\label{fig:example3_sigma}
\end{figure}

\subsection{Three-dimensional damping of circularity}
\label{num:3D}

In this experiment, we compute the three-dimensional (3D) damping of circularity. 
Following \cite{Bansch2001}, the initial domain is an ellipsoid with semi-axis lengths $(1.2,0.8,0.8)$, and its evolution is driven solely by surface tension.
We define the circularity for the 3D case as
\[
C_h^m:=(36\pi)^{1/3}|\Oh^m|^{2/3} |\Gh^m|^{-1}.
\]

We set $\mu=0.01$, $\Delta t=10^{-2}$, and the final time $T=5$. 
The experiment is performed for several values of the surface tension coefficient: $\gamma_0=1,2,4,6,8,10$.
Figure~\ref{fig:Ellipsoid_energy_circularity} summarizes the damping behavior for different values of $\gamma_0$. 
We observe that the circularity oscillates and that a larger surface tension coefficient leads to a higher oscillation frequency.
To quantify this dependence, let $t_{p_0}<t_{p_1}<\cdots<t_{p_n}$ denote successive local minima of the circularity. 
We define the mean oscillation frequency as
$f=n/(t_{p_n}-t_{p_0})$.
The right panel of Figure~\ref{fig:Ellipsoid_energy_circularity} shows the computed frequency versus $\gamma_0$. 
The numerical results closely follow the scaling $f\sim\gamma_0^{1/2}$,
which is consistent with the capillary oscillation behavior reported in the literature \cite{Bansch2001}.

Figure~\ref{fig:Ellipsoid3D} shows the mesh velocity for $\gamma_0=1$ at several representative times. 
The mesh velocity vectors clearly illustrate the alternating deformation of the domain driven by surface tension and its gradual relaxation toward a spherical shape under viscous dissipation.

\begin{figure}[htbp]
    \centering    
    \begin{subfigure}[b]{0.45\textwidth}
        \centering
        \includegraphics[width=\textwidth]{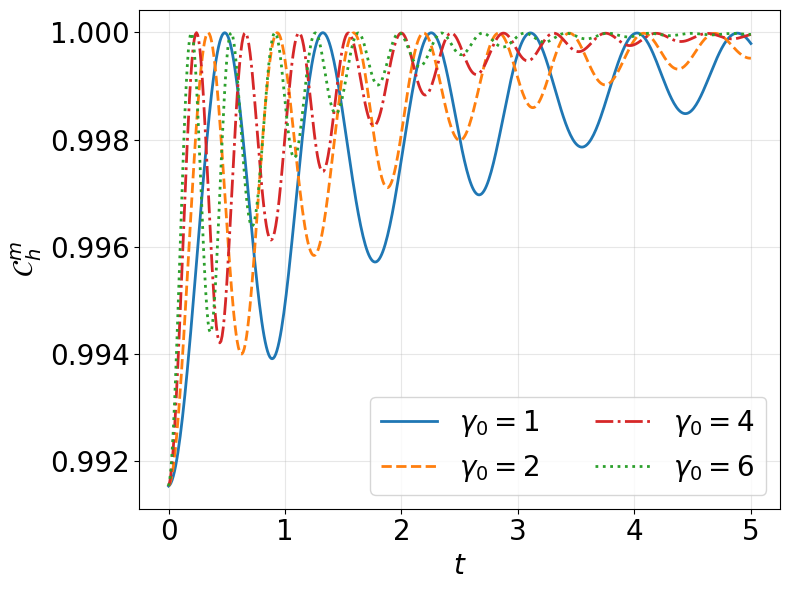} 
    \end{subfigure}
    \hfill 
    \begin{subfigure}[b]{0.45\textwidth}
        \centering
        \includegraphics[width=\textwidth]{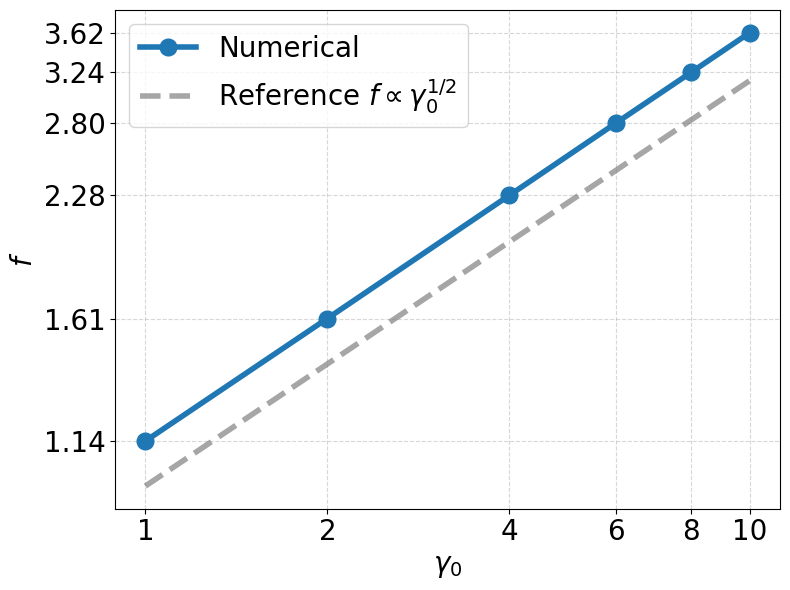} 
    \end{subfigure}
    \vspace{-10pt}
\caption{
Damping test with different $\gamma_0$.
Left: the circularity $\mathcal{C}_h^m$.
Right: mean oscillation frequency $f$ versus $\gamma_0$.} 
\label{fig:Ellipsoid_energy_circularity}
\end{figure}

\begin{figure}[htbp]
    \centering    
    \begin{subfigure}[b]{0.3\textwidth}
        \centering
        \includegraphics[width=\textwidth]{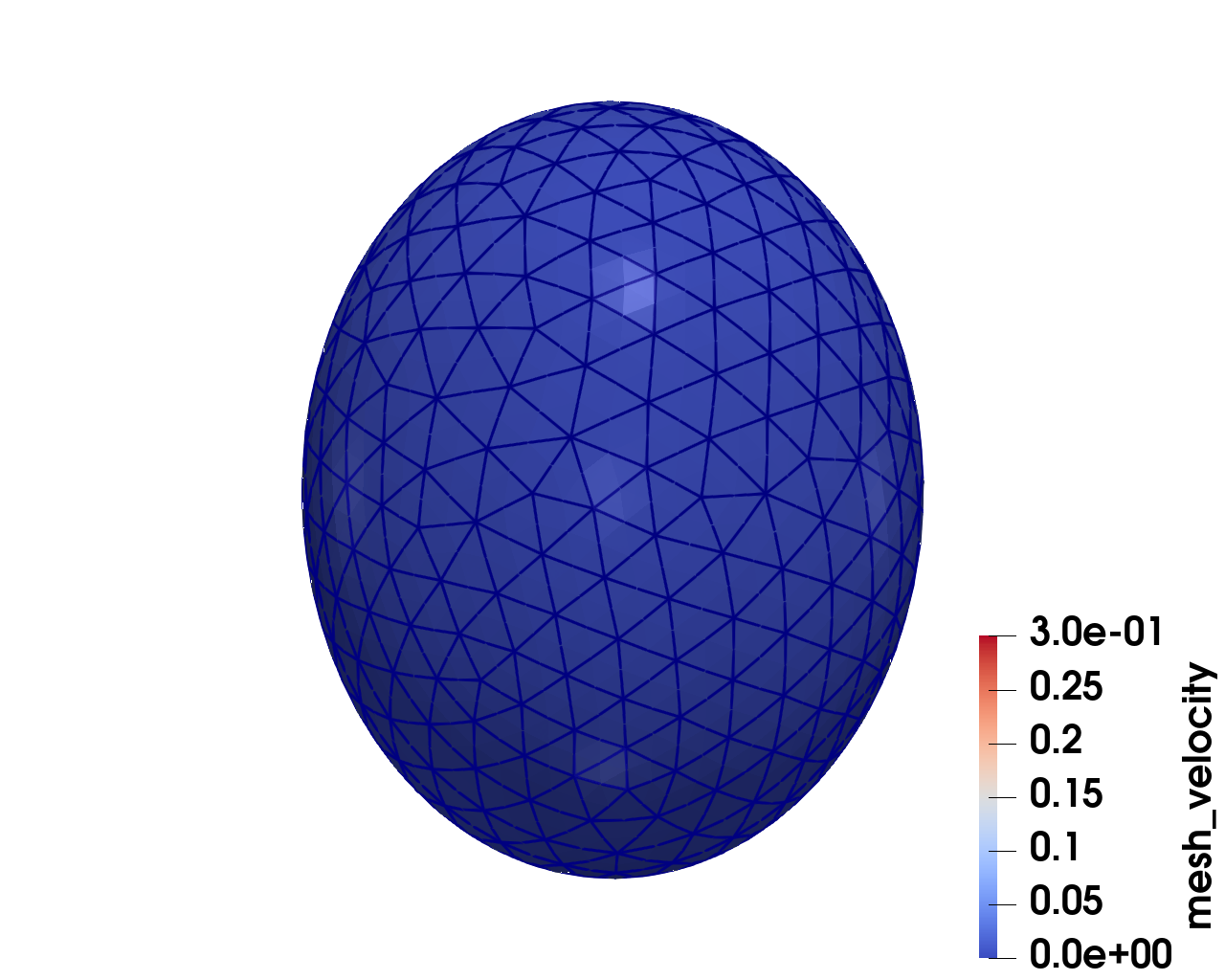} 
        \caption{$t=0.0$}
    \end{subfigure}
    \hfill 
    \begin{subfigure}[b]{0.3\textwidth}
        \centering
        \includegraphics[width=\textwidth]{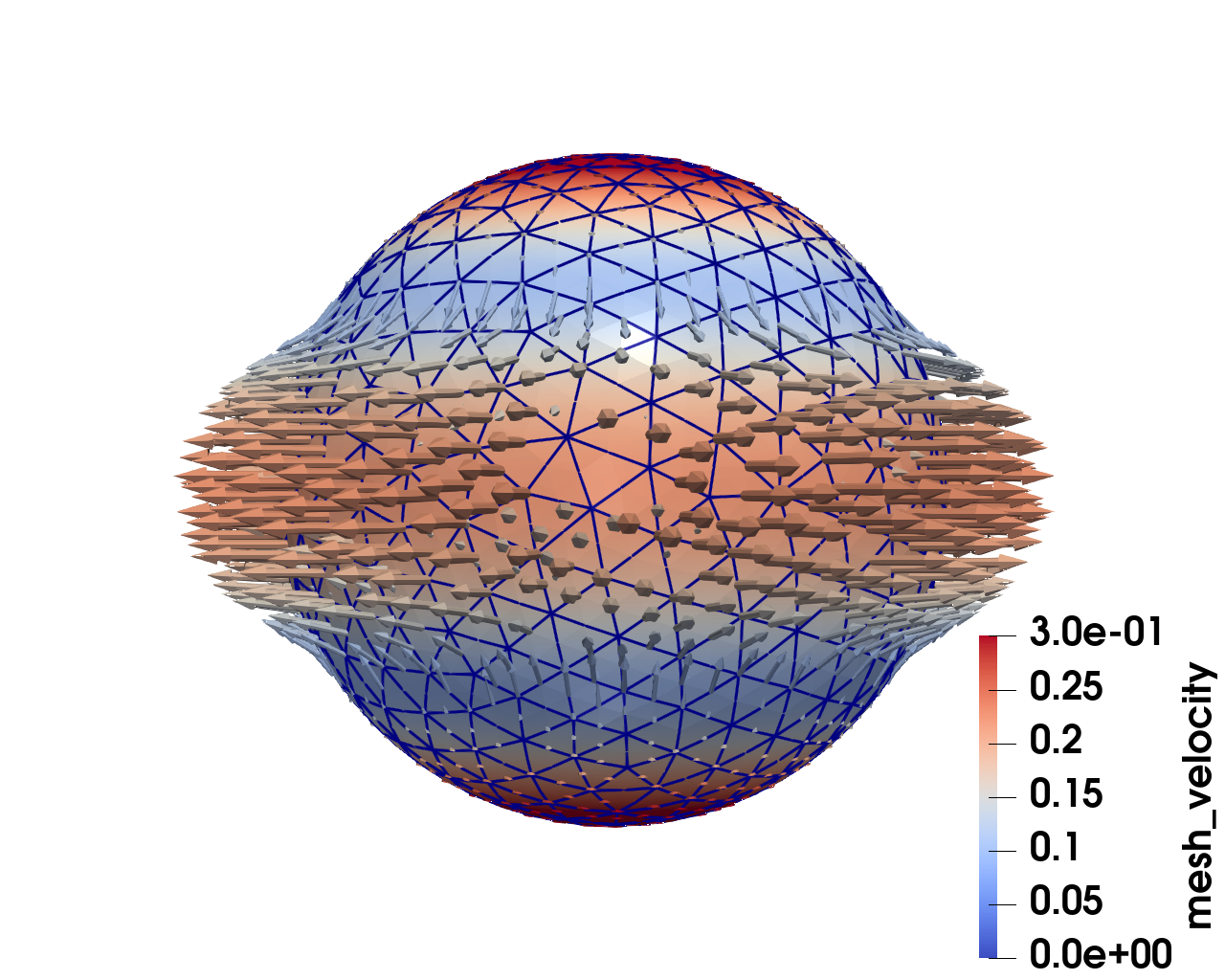} 
        \caption{$t=0.48$}
    \end{subfigure}
    \hfill
    \begin{subfigure}[b]{0.3\textwidth}
        \centering
        \includegraphics[width=\textwidth]{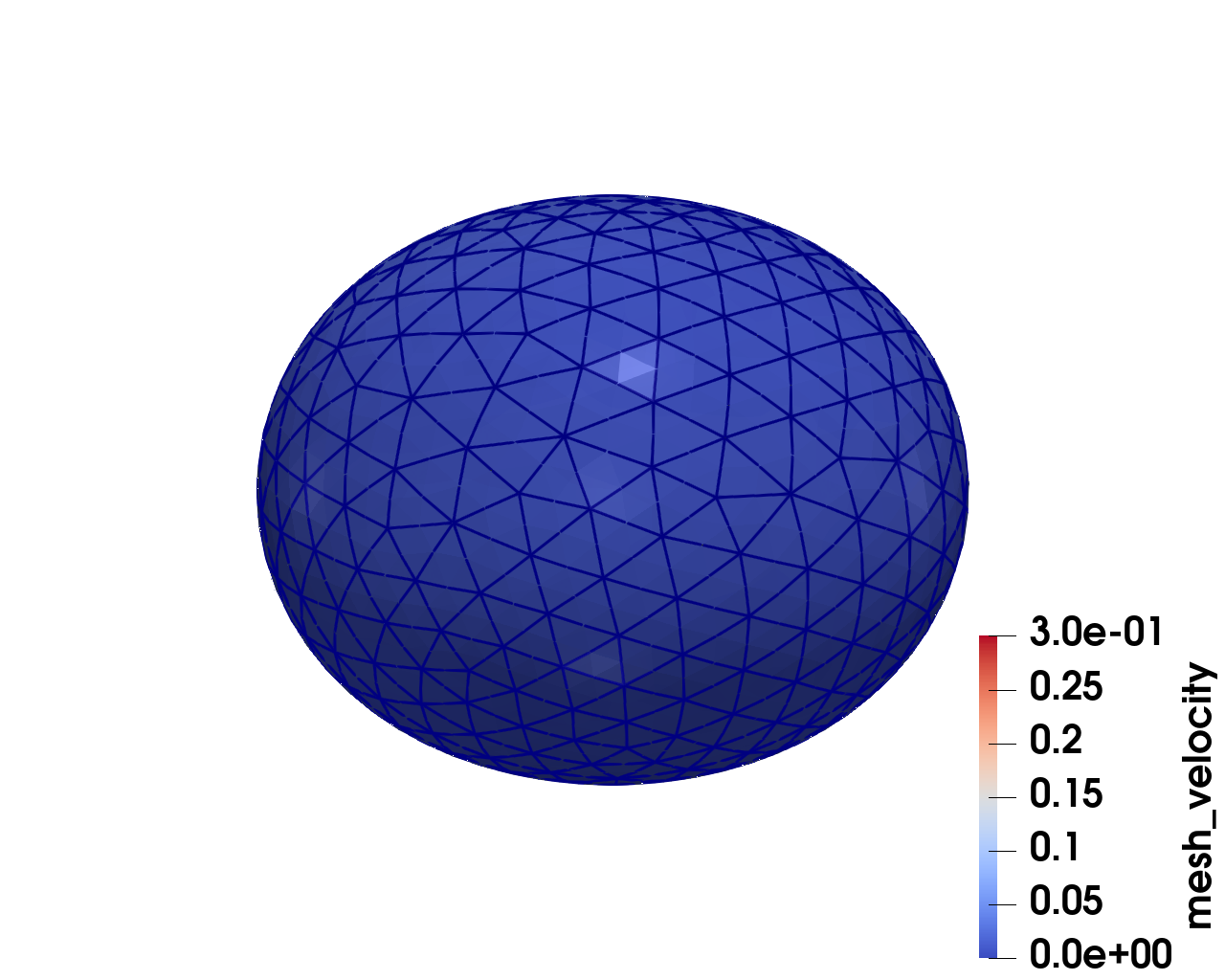} 
        \caption{$t=0.89$}
    \end{subfigure}
    % \hfill 
    \begin{subfigure}[b]{0.3\textwidth}
        \centering
        \includegraphics[width=\textwidth]{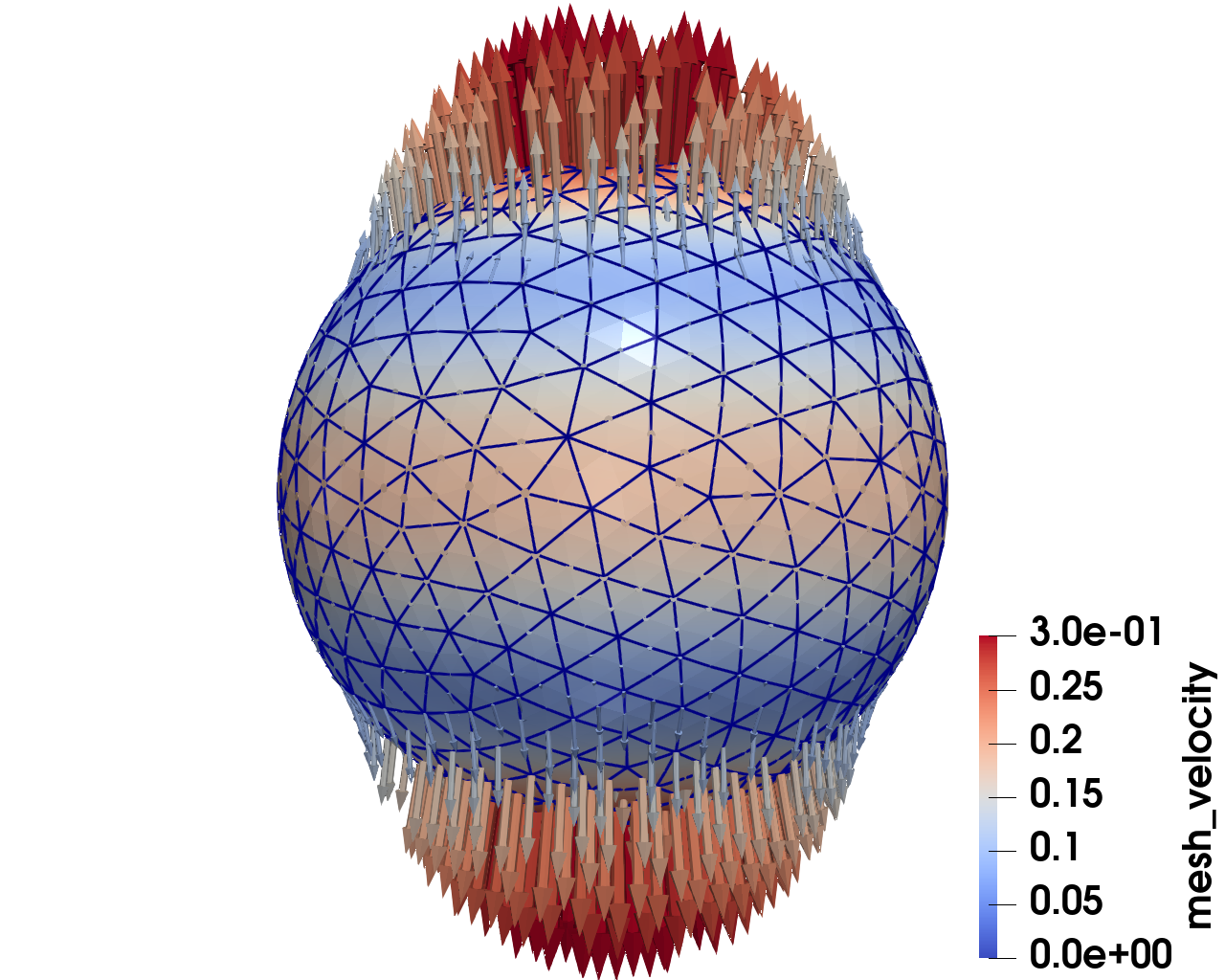} 
        \caption{$t=1.32$}
    \end{subfigure}
    \hfill 
    \begin{subfigure}[b]{0.3\textwidth}
        \centering
        \includegraphics[width=\textwidth]{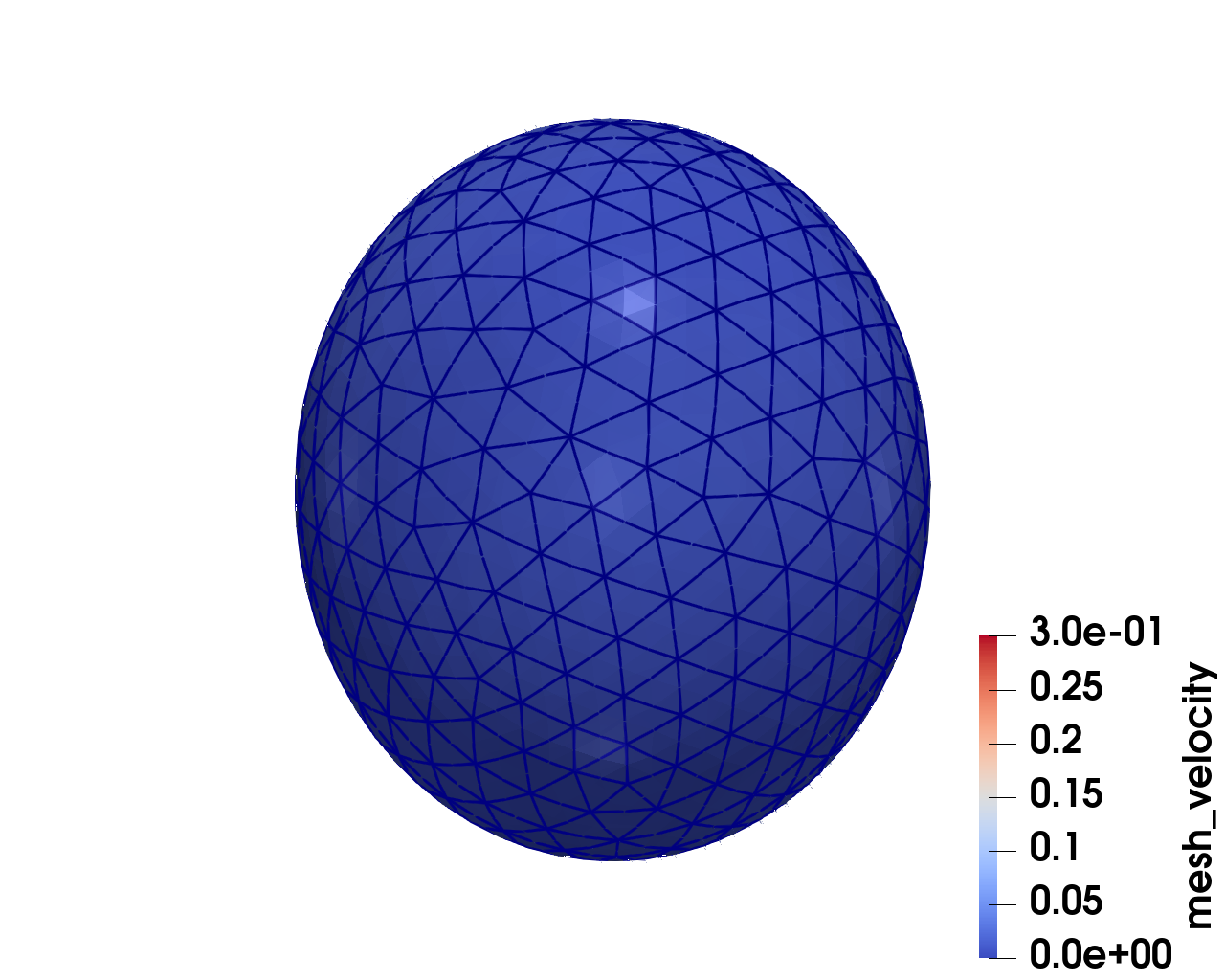} 
        \caption{$t=1.77$}
    \end{subfigure}
    \hfill
    \begin{subfigure}[b]{0.3\textwidth}
        \centering
        \includegraphics[width=\textwidth]{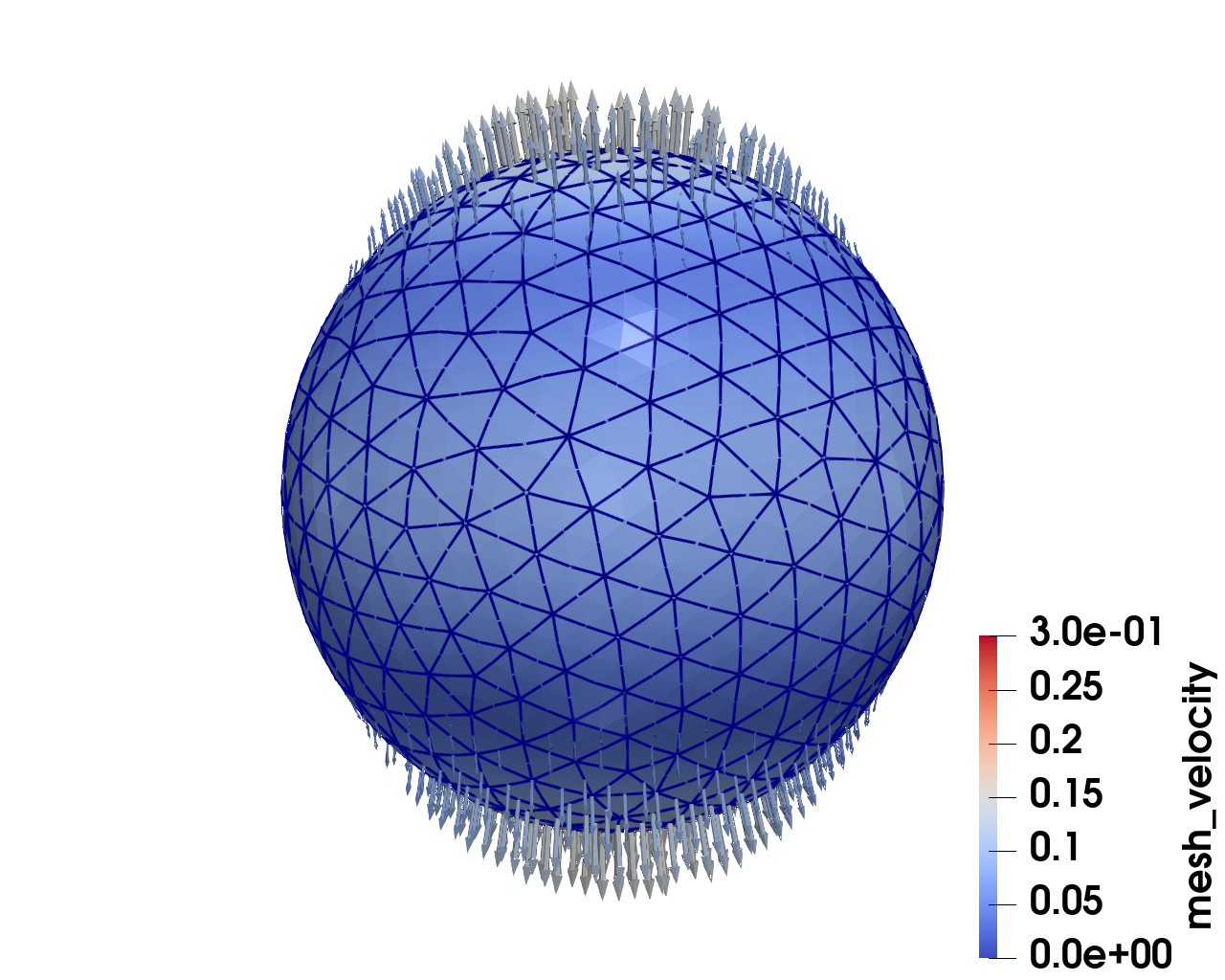} 
        \caption{$t=5.00$}
    \end{subfigure}
    \vspace{-10pt}
\caption{Mesh velocity fields at different time instants with $\Delta t=10^{-2}$ and $\gamma_0 = 1$. } 
\vspace{-10pt}
\label{fig:Ellipsoid3D}
\end{figure}

\subsection{Three-dimensional flow in a popcorn-shaped domain}

We further consider a popcorn-shaped droplet
where the initial domain is 
\[
\Omega^0=\{\bm x\in\mathbb R^3:\phi(\bm x)<0\},
\qquad
\phi(\bm x)
=|\bm x|^2-r_0^2
-\sum_{j=0}^{11}
\exp\!\left(-\frac{|\bm x-\bm p_j|^2}{\sigma_0^2}\right),
\]
with $r_0=0.5$ and $\sigma_0=0.2$.
The twelve bump centres $\{ \bm p_j: j=0,\cdots,11\}$ lie at the vertices of a regular icosahedron inscribed in the sphere of radius $r_0$ centred at the origin.
Specifically, setting $s=r_0/\sqrt{5}$, we take
\[
\begin{aligned}
\bm p_j
&=s\left(
2\cos\frac{2j\pi}{5},
2\sin\frac{2j\pi}{5},1
\right)^\top,
&&j=0,\cdots,4,\\
\bm p_{j+5}
&=s\left(
2\cos\frac{(2j-1)\pi}{5},
2\sin\frac{(2j-1)\pi}{5},-1
\right)^\top,
&&j=0,\cdots,4,\\
\bm p_{10}&=(0,0,r_0)^\top,
\qquad
\bm p_{11}=(0,0,-r_0)^\top.
\end{aligned}
\]

In this test, we set $\mu=0.01$, $\gamma_0=1$ and $\Delta t=10^{-3}$.
Figure~\ref{fig:Popcorn3D} shows the computed meshes and the corresponding mesh velocity vectors at different times.
The meshes retain good quality at these stages of the evolution, despite the nonuniform curvature of the initial interface.

\begin{figure}[htbp]
    \centering    
    \begin{subfigure}[b]{0.4\textwidth}
        \centering
        \includegraphics[width=\textwidth]{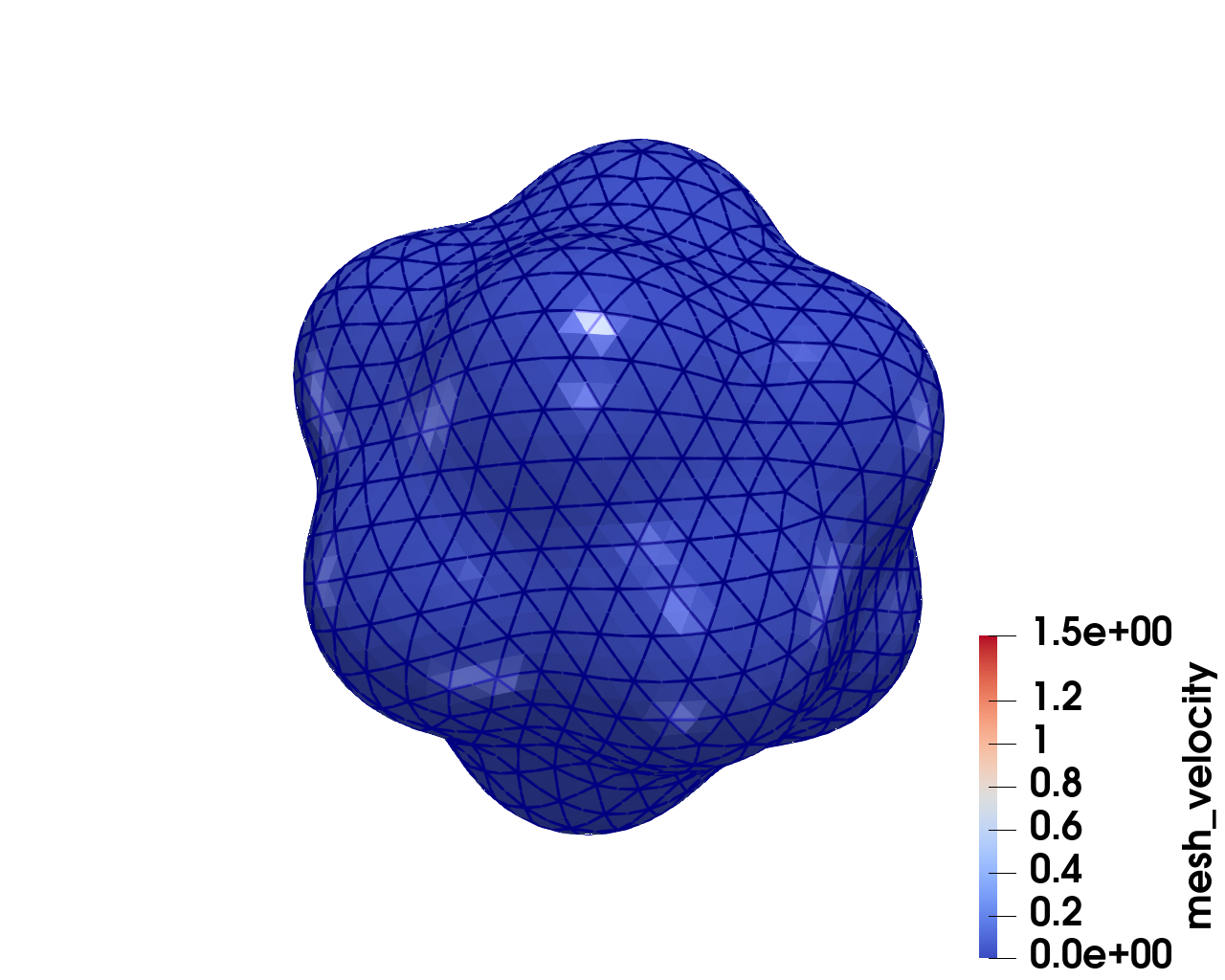} 
        \caption{$t=0.0$}
    \end{subfigure}
    \hspace{10pt} 
    \begin{subfigure}[b]{0.4\textwidth}
        \centering
        \includegraphics[width=\textwidth]{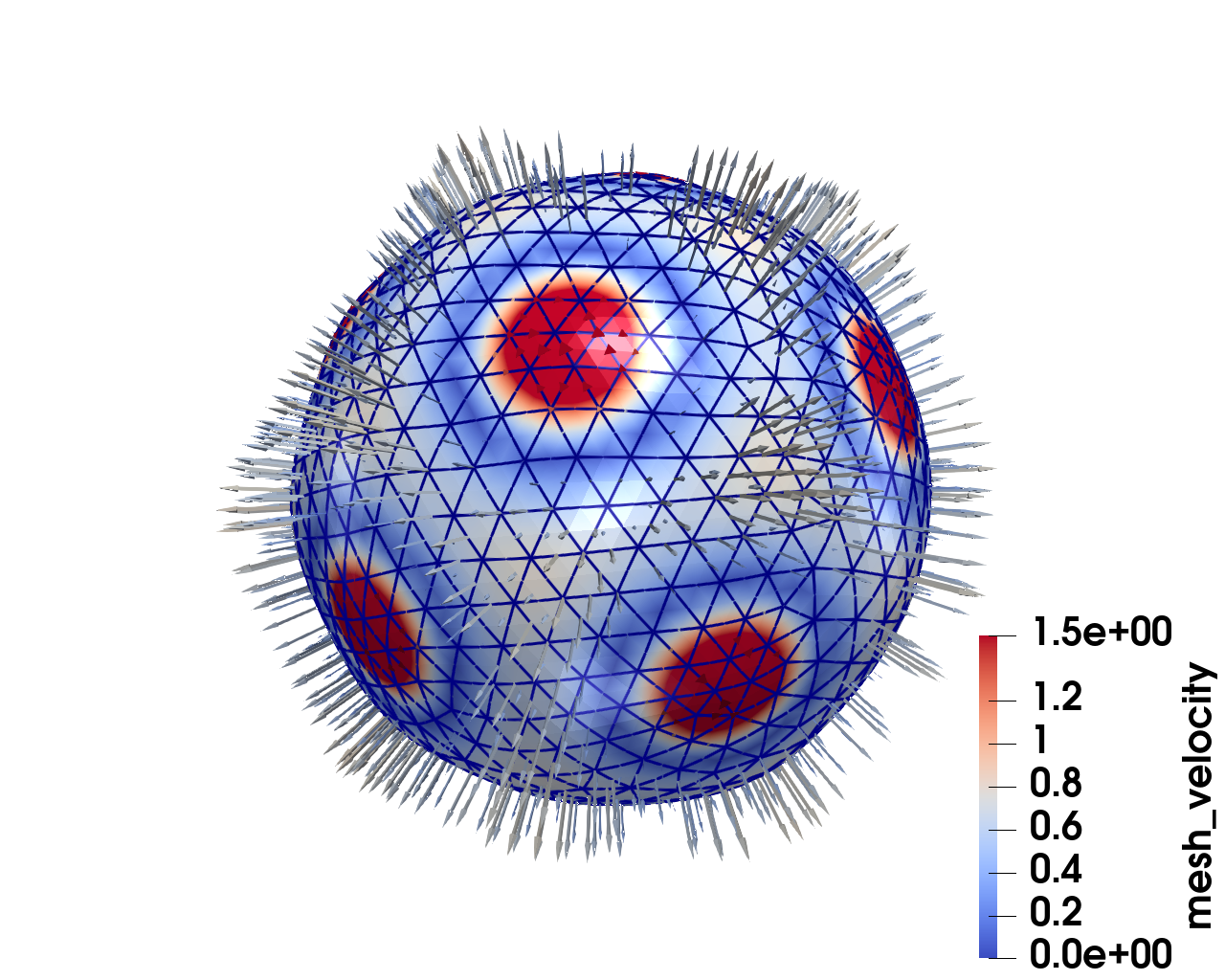} 
        \caption{$t=0.067$}
    \end{subfigure}
    \vspace{-10pt}
    \begin{subfigure}[b]{0.4\textwidth}
        \centering
        \includegraphics[width=\textwidth]{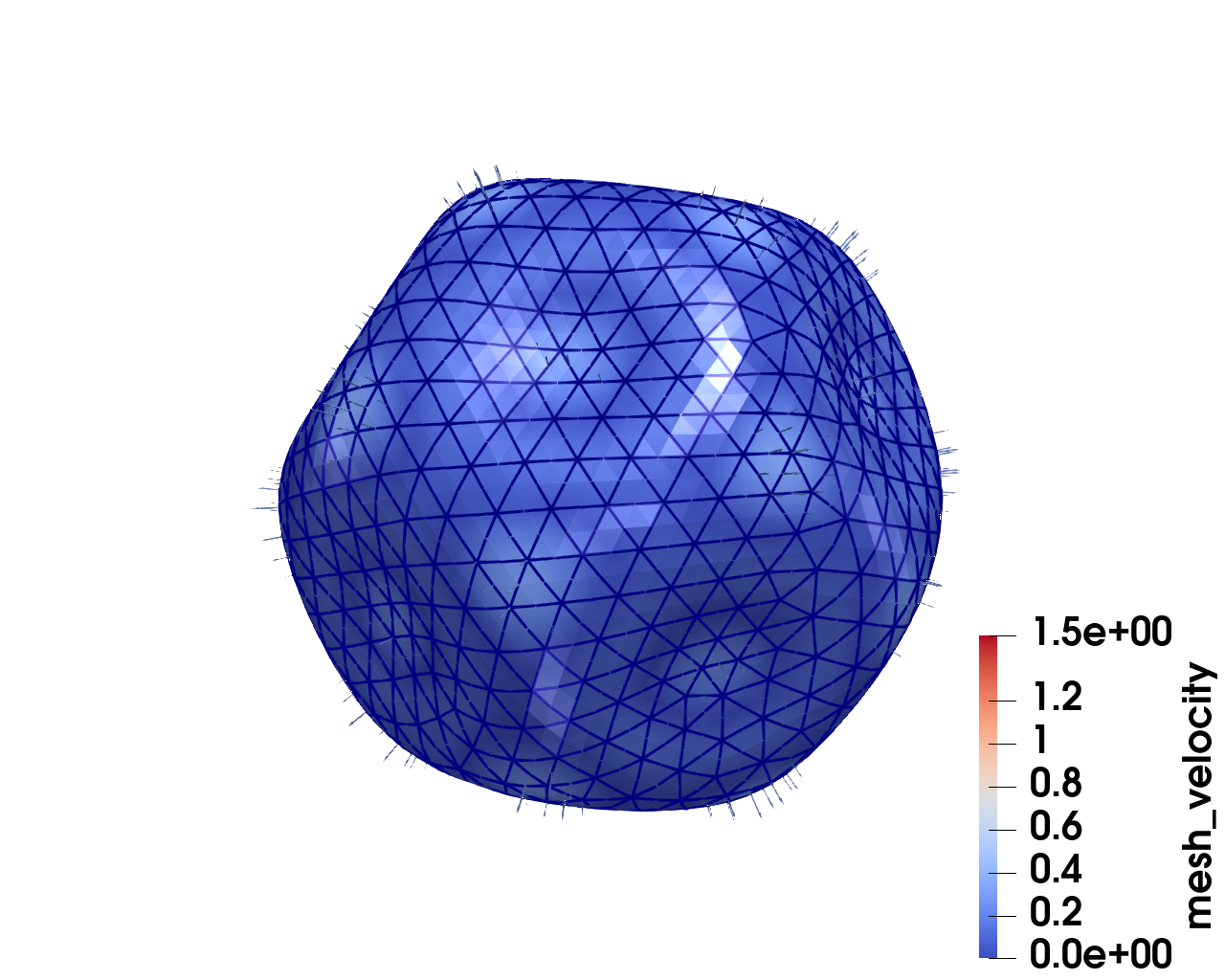} 
        \caption{$t=0.113$}
    \end{subfigure}
    \hspace{10pt} 
    \begin{subfigure}[b]{0.4\textwidth}
        \centering
        \includegraphics[width=\textwidth]{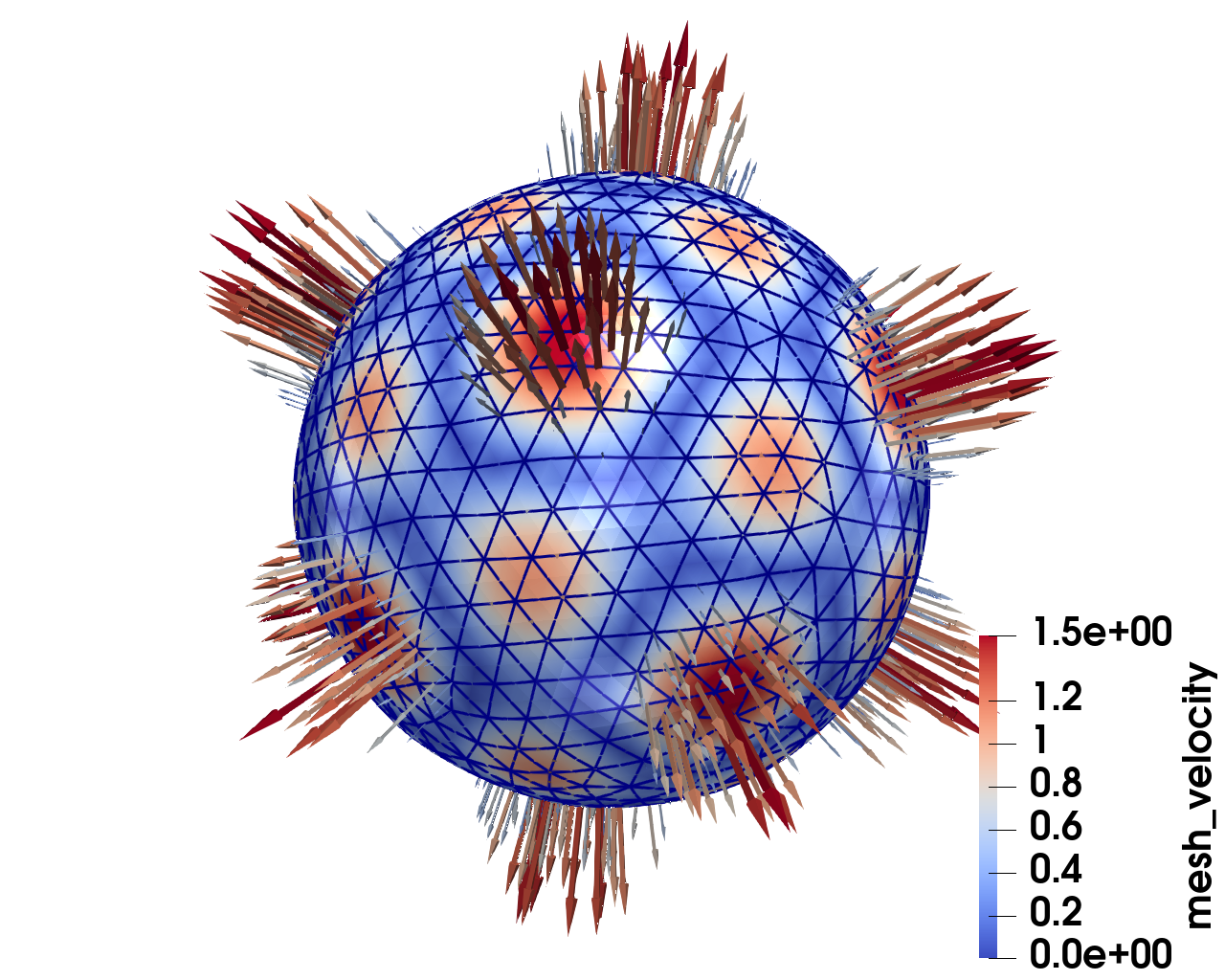} 
        \caption{$t=0.169$}
    \end{subfigure}
    % \vspace{-5pt}
\caption{Mesh velocity fields at different time instants. } 
\label{fig:Popcorn3D}
\end{figure}

\section{Conclusion}
\label{sec:conclusion}

In this work, we develop an energy-stable ALE--FEM based on compensated tangential  velocity 
for incompressible Navier--Stokes free-boundary flows driven by surface tension. 
The surface mesh velocity is decomposed into its normal and tangential components: 
the normal component follows the fluid motion, while the tangential component is determined by the CTV formulation and is used to improve the quality of the surface mesh. 
In particular, the fully discrete CTV formulation consistently recovers its semi-discrete counterpart as $\Delta t\to0$. 
The parameter $\alpha$, which controls the relaxation rate of the artificial tangential mesh redistribution, is chosen according to the viscous--capillary scaling as $\alpha=\gamma_0/\mu$.
The resulting surface mesh velocity is then extended into the bulk by the MDR extension.
We establish the well-posedness of the fully discrete scheme and energy-dissipation estimates for both the semi-discrete and fully discrete formulations. 

A natural future direction is to establish rigorous convergence results and error estimates for ALE finite element methods applied to free-boundary and two-phase problems, including Navier--Stokes flows and dewetting problems.

\section*{Acknowledgments}
The authors would like to thank Prof. Buyang Li for his constructive comments and suggestions that have significantly improved the quality of this paper.

\bibliographystyle{abbrv}
\bibliography{ref}

\end{document}